\numberwithin{equation}{section}
\theoremstyle{plain}
\newtheorem{corollary}{Corollary}
\newtheorem{definition}{Definition}
\newtheorem{lemma}{Lemma}
\newtheorem{proposition}{Proposition}
\newtheorem{remark}{Remark}
\newtheorem{theorem}{Theorem}
\newcommand{\one}{{{\rm 1\mkern-1.5mu}\!{\rm I}}}
\begin{document}

\title[Quenched Large Deviations for RWRE]{Quenched Large Deviations for \\Random Walk in a Random Environment}
\author{Atilla Yilmaz}
\address
{Department of Mathematics\newline
\indent Weizmann Institute of Science\newline
\indent Rehovot 76100\newline
\indent ISRAEL}%
\email{atilla.yilmaz@weizmann.ac.il}%
\urladdr{http://www.wisdom.weizmann.ac.il/$\sim$yilmaz/}
\date{March 27, 2008. Revised on December 15, 2008.}
\subjclass[2000]{60K37, 60F10, 82C44.}%Processes in random environments, large deviations, dynamics of disordered systems.
\keywords{Disordered media, rare events, point of view of the particle, Doob $h$-transform, invariant measure.}

\thanks{This research was supported partially by a grant from the National Science Foundation: DMS-06-04380.}

\maketitle

\begin{abstract}
We take the point of view of a particle performing random walk with bounded jumps on $\mathbb{Z}^d$ in a stationary and ergodic random environment. We prove the quenched large deviation principle (LDP) for the pair empirical measure of the environment Markov chain. By an appropriate contraction, we deduce the quenched LDP for the mean velocity of the particle and obtain a variational formula for the corresponding rate function. We propose an Ansatz for the minimizer of this formula. When $d=1$, we verify this Ansatz and generalize the nearest-neighbor result of Comets, Gantert and Zeitouni to walks with bounded jumps.
\end{abstract}

\section{Introduction}

\subsection{The model}

The random motion of a particle on $\mathbb{Z}^d$ can be modelled by a discrete time Markov chain. Write $\pi(x,x+z)$ for the transition probability from $x$ to $x+z$ for each $x,z\in\mathbb{Z}^d$ and refer to $\omega_x:=(\pi(x,x+z))_{z\in\mathbb{Z}^d}$ as the environment at $x$. If the environment $\omega:=(\omega_x)_{x\in\mathbb{Z}^d}$ is sampled from a probability space $(\Omega,\mathcal{B},\mathbb{P})$, then the particle is said to perform random walk in a random environment (RWRE). Here, $\mathcal{B}$ is the Borel $\sigma$-algebra.

For each $z\in\mathbb{Z}^d$, define the shift $T_z$ on $\Omega$ by $\left(T_z\omega\right)_x=\omega_{x+z}$ and assume that $\mathbb{P}$ is stationary and ergodic under $\left(T_z\right)_{z\in\mathbb{Z}^d}$. Plus, assume that the jumps are bounded by a constant $B$, i.e., for any $z=(z_1,\ldots,z_d)\in\mathbb{Z}^d$, $\pi(0,z)=0$ $\mathbb{P}$-a.s.\ unless $0<|z_1|+\cdots+|z_d|\leq B$. Denote the set of allowed jumps of the walk by \[\mathcal{R} := \{(z_1,\ldots,z_d)\in\mathbb{Z}^d:\;0<|z_1|+\cdots+|z_d|\leq B\}.\] When $B=1$, the walk is said to be nearest-neighbor and the set of allowed jumps is \[U:=\{(z_1,\ldots,z_d)\in\mathbb{Z}^d:\;|z_1|+\cdots+|z_d|=1\}.\]

For any $x\in\mathbb{Z}^d$ and $\omega\in\Omega$, the Markov chain with transition probabilities given by $\omega$ induces what is called the ``quenched" probability measure $P_x^\omega$ on the space of paths starting at $x$. The semi-direct product $P_x:=\mathbb{P}\times P_x^\omega$ is referred to as the ``averaged" measure. Expectations under $\mathbb{P}$, $P_x^\omega$ and $P_x$ are denoted by $\mathbb{E}$, $E_x^\omega$ and $E_x$, respectively.

Because of the extra layer of randomness in the model, the standard questions of recurrence vs.\ transience, the law of large numbers (LLN), the central limit theorem (CLT) and the large deviation principle (LDP) --- which have well known answers for classical random walk --- become hard. However, it is possible by taking the ``point of view of the particle" to treat the two layers of randomness as one: If we denote the random path of the particle by $X:=(X_n)_{n\geq0}$, then $(T_{X_n}\omega)_{n\geq0}$ is a Markov chain (referred to as the ``environment Markov chain") on $\Omega$ with transition kernel $\overline{\pi}$ given by \[\overline{\pi}(\omega,\omega'):=\sum_{z:T_z\omega=\omega'}\pi(0,z).\] This is a standard approach in the study of random media. (See, for example, \cite{DeMasi}, \cite{KV}, \cite{Kozlov}, \cite{Olla} or \cite{PV}.)

Instead of viewing the environment Markov chain as an auxiliary construction, one can introduce it first and then deduce the particle dynamics from it.
\begin{definition}\label{ortamkeli}
A function $\hat{\pi}:\Omega\times\mathcal{R}\to\mathbb{R}^+$ is said to be an ``environment kernel" if\\(i) $\hat{\pi}(\cdot,z)$ is $\mathcal{B}$-measurable for each $z\in\mathcal{R}$, and (ii) $\sum_{z\in\mathcal{R}}\hat{\pi}(\cdot,z)=1,\ \mathbb{P}$-a.s.\\
It can be viewed as a transition kernel on $\Omega$ via the following identification: \[\overline{\pi}(\omega,\omega'):=\sum_{z:T_z\omega=\omega'}\hat{\pi}(\omega,z).\]

Given $x\in\mathbb{Z}^d$, $\omega\in\Omega$ and any environment kernel $\hat{\pi}$, the ``quenched" probability measure $P_x^{\hat{\pi},\omega}$ on the space of particle paths $(X_n)_{n\geq0}$ starting at $x$ in environment $\omega$ is defined by setting $P_x^{\hat{\pi},\omega}\left(X_o=x\right)=1$ and \[P_x^{\hat{\pi},\omega}\left(X_{n+1}=y+z\left|X_n=y\right.\right)=\hat{\pi}(T_y\omega,z)\] for all $n\geq0$, $y\in\mathbb{Z}^d$ and $z\in\mathcal{R}$. The semi-direct product $P_x^{\hat{\pi}}:=\mathbb{P}\times P_x^{\hat{\pi},\omega}$ is referred to as the ``averaged" measure, and expectations under $P_x^{\hat{\pi},\omega}$ and $P_x^{\hat{\pi}}$ are denoted by $E_x^{\hat{\pi},\omega}$ and $E_x^{\hat{\pi}}$, respectively.
\end{definition}

See \cite{Sznitman} or \cite{rwrereview} for a more detailed description of RWRE, examples and a survey of the literature. In this work, we focus on the quenched large deviation properties of this model.

\subsection{Previous results}\label{egridogru}

Greven and den Hollander \cite{GdH} prove the quenched LDP for the mean velocity of a particle performing nearest-neighbor random walk on $\mathbb{Z}$ in a product environment (i.e., when $\mathbb{P}$ is a product measure) and show that the rate function is convex but typically has parts consisting of line segments. Their proof makes use of an auxiliary branching process formed by the excursions of the walk. Using a completely different technique, Comets, Gantert and Zeitouni \cite{CGZ} extend the results of \cite{GdH} to stationary and ergodic environments. Their argument involves first proving a quenched LDP for the passage times of the walk by an application of the G\"artner--Ellis theorem and then inverting this to get the desired LDP for the mean velocity.

For $d\geq2$, the first result on quenched large deviations is given by Zerner \cite{Zerner}. He uses a subadditivity argument again for certain passage times to get the quenched LDP in the case of product environments. He assumes that the environment is ``nestling", i.e., the convex hull of the support of the law of $\sum_{z\in\mathcal{R}}\pi(0,z)z$ contains the origin. By a more direct use of the subadditive ergodic theorem, Varadhan \cite{Raghu} drops the nestling assumption and generalizes Zerner's result to stationary and ergodic environments.

The drawback of using subadditivity arguments is that one does not obtain a formula for the rate function. Rosenbluth \cite{jeffrey} takes the point of view of the particle and gives an alternative proof of the quenched LDP for the mean velocity in the case of stationary and ergodic environments. He provides a variational formula for the rate function. His approach is parallel to the work of Kosygina, Rezakhanlou and Varadhan \cite{KRV} on diffusions in random environments.

\subsection{Our results}\label{results}

For any measurable space $(Y,\mathcal{F})$, write $M_1(Y,\mathcal{F})$ (or simply $M_1(Y)$ whenever no confusion occurs) for the space of probability measures on $(Y,\mathcal{F})$. Consider random walk $X=(X_n)_{n\geq0}$ on $\mathbb{Z}^d$ in a stationary and ergodic random environment, and focus on \[\nu_{n,X} := \frac{1}{n}\sum_{k=0}^{n-1}\one_{T_{X_k}\omega,X_{k+1}-X_k}\] which is a random element of $M_1(\Omega\times\mathcal{R})$. The map $(\omega,z)\mapsto(\omega,T_z\omega)$ imbeds $M_1(\Omega\times\mathcal{R})$ into $M_1(\Omega\times\Omega)$, and we therefore refer to $\nu_{n,X}$ as the pair empirical measure of the environment Markov chain. For any $\mu\in M_1(\Omega\times\mathcal{R})$, define the probability measures $(\mu)^1$ and $(\mu)^2$ on $\Omega$ by \[\mathrm{d}(\mu)^1(\omega):=\sum_{z\in\mathcal{R}}\mathrm{d}\mu(\omega,z)\ \ \mbox{and}\ \ \mathrm{d}(\mu)^2(\omega):=\sum_{z\in\mathcal{R}}\mathrm{d}\mu(T_{-z}\omega,z)\] which are the marginals of $\mu$ when $\mu$ is seen as an element of $M_1(\Omega\times\Omega)$. With this notation, set
\begin{equation*}
M_{1,s}^{\ll}(\Omega\times\mathcal{R}):= \left\{\mu\in M_1(\Omega\times\mathcal{R}): (\mu)^1=(\mu)^2\ll\mathbb{P},\ \frac{\mathrm{d}\mu(\omega,z)}{\mathrm{d}(\mu)^1(\omega)}>0\ \mathbb{P}\mbox{-a.s.\ for each }z\in U\right\}.
\end{equation*}

Our first result is the following theorem whose proof constitutes Section \ref{pairLDPsection}.
\begin{theorem}\label{level2LDP} If there exists an $\alpha>0$ such that
\begin{equation}
\int|\log \pi(0,z)|^{d+\alpha}\,\mathrm{d}\mathbb{P}<\infty\label{kimimvarki}
\end{equation} for each $z\in\mathcal{R}$, then
$\mathbb{P}$-a.s.\ $(P_o^\omega(\nu_{n,X}\in\cdot))_{n\geq1}$ satisfy the LDP with the good rate function $\mathfrak{I}^{**}$, the double Fenchel--Legendre transform of $\mathfrak{I}:M_1(\Omega\times\mathcal{R})\to\mathbb{R}^+$ given by
\begin{equation}
\mathfrak{I}(\mu)=\left\{
\begin{array}{ll}
\int_{\Omega}\sum_{z\in\mathcal{R}} \mathrm{d}\mu(\omega,z)\log\frac{\mathrm{d}\mu(\omega,z)}{\mathrm{d}(\mu)^1(\omega)\pi(0,z)}& \mbox{if }\mu\in M_{1,s}^{\ll}(\Omega\times\mathcal{R}),\\ \infty & \mbox{otherwise.}
\end{array}\right.\label{level2ratetilde}
\end{equation}
\end{theorem}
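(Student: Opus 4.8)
The strategy is the standard two-part recipe for establishing an LDP: first prove the upper bound with the rate function $\mathfrak{I}$ on compact sets, then upgrade it to exponential tightness and hence a full upper bound with $\mathfrak{I}^{**}$; second, prove the matching lower bound with $\mathfrak{I}^{**}$. Since $M_1(\Omega\times\mathcal{R})$ (with the weak topology induced by bounded measurable test functions that are continuous in a suitable sense, or via the weak* topology after choosing a metrizable structure on $\Omega$) is a nice topological space, the identification of the rate function as the double Fenchel--Legendre transform $\mathfrak{I}^{**}$ is forced once we have a convex-duality representation. The plan is therefore to work at the level of the \emph{moment generating functional}: for bounded measurable $f:\Omega\times\mathcal{R}\to\mathbb{R}$, study
\[
\Lambda(f):=\lim_{n\to\infty}\frac1n\log E_o^\omega\!\left[\exp\Big(\sum_{k=0}^{n-1}f(T_{X_k}\omega,X_{k+1}-X_k)\Big)\right],
\]
show this limit exists $\mathbb{P}$-a.s.\ and is nonrandom, and identify $\Lambda$ as the convex conjugate of $\mathfrak{I}$. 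The Varadhan/Bryc-type machinery then yields the LDP with rate function $\Lambda^*=\mathfrak{I}^{**}$.

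First I would analyze $\Lambda(f)$ via a tilted operator. The quantity inside the expectation is governed by the transfer operator $(\mathcal{L}_f g)(\omega):=\sum_{z\in\mathcal{R}}\pi(0,z)e^{f(\omega,z)}g(T_z\omega)$ acting on functions on $\Omega$; one expects $\Lambda(f)=\log\lambda(f)$ where $\lambda(f)$ is a generalized principal eigenvalue, and the existence of the limit should follow from a subadditivity or Kingman-type argument applied to $\log E_o^\omega[\cdots]$ together with the ergodic theorem. The moment condition \eqref{kimimvarki} — an $L^{d+\alpha}$ bound on $\log\pi(0,z)$ — is exactly what is needed here: it controls the contribution of atypically small transition probabilities along the path, guaranteeing that the logarithmic moment generating function is finite and that one can pass to the limit; this is the same type of integrability hypothesis used by Rosenbluth and by Varadhan, and morally it ensures the walk cannot get "stuck" in a way that destroys the $n^{-1}$ normalization. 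The next step is the variational identity $\Lambda(f)=\sup_{\mu}\big(\langle f,\mu\rangle-\mathfrak{I}(\mu)\big)$: the inequality $\Lambda(f)\ge\langle f,\mu\rangle-\mathfrak{I}(\mu)$ comes from a change-of-measure (entropy/Jensen) argument using the environment kernel $\hat\pi(\omega,z)=\mathrm{d}\mu(\omega,z)/\mathrm{d}(\mu)^1(\omega)$ as a tilt — here the shift-invariance requirement $(\mu)^1=(\mu)^2$ and the positivity condition defining $M_{1,s}^{\ll}(\Omega\times\mathcal{R})$ enter, since they are precisely what allow $\mathrm{d}(\mu)^1$ to serve as an invariant measure for the tilted environment chain and make the relative-entropy cost come out to $\mathfrak{I}(\mu)$; the reverse inequality follows from a careful choice of near-optimal $f$, essentially a Doob $h$-transform (as the keywords of the paper suggest), inverting the eigenfunction relation for $\mathcal{L}_f$.

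For the \textbf{lower bound}, I would fix $\mu\in M_{1,s}^{\ll}(\Omega\times\mathcal{R})$ with $\mathfrak{I}(\mu)<\infty$, form the environment kernel $\hat\pi_\mu$ as above, and compare $P_o^\omega$ to $P_o^{\hat\pi_\mu,\omega}$. Under $P_o^{\hat\pi_\mu}$, the measure $(\mu)^1$ is invariant and — by the positivity condition and ergodicity of $\mathbb{P}$ — the environment chain is ergodic, so the ergodic theorem gives $\nu_{n,X}\to\mu$ in probability; the Radon--Nikodym cost of this change of measure, divided by $n$, converges to $\mathfrak{I}(\mu)$. This gives the lower bound $\liminf n^{-1}\log P_o^\omega(\nu_{n,X}\in G)\ge -\mathfrak{I}(\mu)$ for any open $G\ni\mu$, and since the topology makes $\mathfrak{I}^{**}$ the lower semicontinuous convex hull of $\mathfrak{I}$, this suffices. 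For the \textbf{upper bound} and exponential tightness, I would bound $P_o^\omega(\nu_{n,X}\in C)$ for closed $C$ using the exponential Chebyshev inequality against $e^{n\langle f,\nu_{n,X}\rangle}$ for a rich family of test functions $f$, invoking the identity $\Lambda(f)=\mathfrak{I}^*(f)$; exponential tightness comes from the fact that $M_1(\Omega\times\mathcal{R})$ with $\mathcal{R}$ finite is "essentially" $M_1$ of a compact-ish space in the relevant topology, plus a uniform control on the empirical measure of the environment coordinate coming again from \eqref{kimimvarki}.

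The main obstacle I anticipate is the a.s.\ existence and non-randomness of $\Lambda(f)$ under the weak moment hypothesis \eqref{kimimvarki} — equivalently, the construction and integrability of the tilted eigenfunction / invariant density — since this is where the environment is merely stationary-ergodic (not i.i.d.) and the transition probabilities are only $L^{d+\alpha}$-logarithmically integrable rather than uniformly elliptic. This is the technical heart: one must show the Green's function or the eigenfunction of $\mathcal{L}_f$ is finite and well-behaved $\mathbb{P}$-a.s., and that the bounded-jump structure (with $|z|\le B$ and the $U$-positivity built into $M_{1,s}^{\ll}$) provides enough connectivity to run the argument. Everything else — Varadhan's lemma, the Legendre-duality bookkeeping, lower semicontinuity, the passage to $\mathfrak{I}^{**}$ — is comparatively routine once $\Lambda$ is under control.
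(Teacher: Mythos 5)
Your skeleton — go through the logarithmic moment generating functional $\Lambda(f)$, identify it with the Fenchel--Legendre transform $\mathfrak{I}^*(f)$, then use compactness for the upper bound and a change-of-measure argument for the lower bound — matches the paper's. Your treatment of the LDP lower bound in particular (fix $\mu\in M_{1,s}^{\ll}(\Omega\times\mathcal{R})$, tilt to $\hat\pi_\mu=\mathrm{d}\mu/\mathrm{d}(\mu)^1$ with invariant density $\mathrm{d}(\mu)^1/\mathrm{d}\mathbb{P}$, invoke Kozlov's lemma for ergodicity of the tilted environment chain, and control the Radon--Nikodym cost by the $L^1$ ergodic theorem and Jensen) is exactly what the paper does. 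The LDP upper bound is also handled as you say: $M_1(\Omega\times\mathcal{R})$ is compact since $\mathcal{R}$ is finite and $\Omega$ is compact, so once $\Lambda(f)$ is identified the upper bound is a textbook consequence and no separate exponential-tightness argument is needed.

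The genuine gap is in how $\Lambda(f)$ is shown to exist, to be nonrandom, and to equal $\mathfrak{I}^*(f)$. You propose subadditivity/Kingman applied to $\log E_o^\omega[\cdots]$ together with a transfer-operator/principal-eigenvalue picture and an appeal to a ``Doob $h$-transform'' to close the variational identity. But $n\mapsto\log E_o^\omega[\exp(\sum_{k<n}f(T_{X_k}\omega,X_{k+1}-X_k))]$ is not sub- or super-additive in a random environment, and subadditive-ergodic arguments were precisely what Zerner and Varadhan used; they give a.s.\ existence of a (level-1) rate without a formula, which is the drawback Rosenbluth's approach — and this theorem — are designed to avoid. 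The paper instead proves a \emph{sandwich}: a lower bound $\Gamma(f)$ via tilting (Jensen plus the ergodic theorem, as you describe), and an independent \emph{upper} bound
\[
\limsup_n \tfrac1n\log E_o^\omega[\cdots]\ \le\ \inf_{F\in\mathcal{K}}\ \mathrm{ess\,sup}_{\mathbb{P}}\ \log\sum_{z\in\mathcal{R}}\pi(0,z)\,\mathrm{e}^{f(\omega,z)+F(\omega,z)},
\]
where $\mathcal{K}$ is the class of closed-loop, mean-zero correctors with $L^{d+\alpha}$ moments. The latter relies on Lemma \ref{GRR} — a Garsia--Rodemich--Rumsey estimate, proved in Rosenbluth's thesis, showing that for $F\in\mathcal{K}$ the path sums $\sum_{k<n}F(T_{x_k}\omega,x_{k+1}-x_k)$ are $o(n)$ uniformly over admissible paths — and this is exactly where the hypothesis $\int|\log\pi(0,z)|^{d+\alpha}\,\mathrm{d}\mathbb{P}<\infty$ enters. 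Your proposal does not anticipate this corrector machinery and, without it, the iterated supermartingale bound that produces $\mathrm{e}^{nK(F)}$ has no counterpart.

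Equally important, closing the sandwich (showing the $\mathcal{K}$-infimum equals $\Gamma(f)$, i.e.\ equivalence of the two bounds) is done by (i) decoupling $\hat\pi$ and $\phi$ via a Lagrange-multiplier-style formulation, (ii) conditioning on an increasing sequence of finite $\sigma$-algebras $\mathcal{E}_k$, (iii) applying Ky Fan's minimax theorem twice to swap $\sup$/$\inf$, (iv) extracting weakly convergent approximate correctors $F_{k,\epsilon}$ that are uniformly bounded in $L^{d+\alpha}$ via the moment hypothesis, and (v) using Mazur's theorem to upgrade weak to strong (hence a.s.) convergence of convex combinations so that the closed-loop property survives the limit. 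None of this is a Doob $h$-transform; in this paper the $h$-transform construction of an explicit tilted kernel $\hat\pi_r$ is deployed only in the one-dimensional Sections 4 and 5, where an explicit principal eigenfunction $\zeta(r,\omega)$ (or its multidimensional-in-$\mathcal{R}$ analogue $u_r$) can actually be built. For the general stationary-ergodic, $d$-dimensional Theorem \ref{level2LDP}, such an explicit eigenfunction is not available, and the minimax/Mazur argument is what replaces it. You correctly identify the existence and nonrandomness of $\Lambda$ as the hard part, but the route you sketch would stall there without the corrector class $\mathcal{K}$, Lemma \ref{GRR}, and the minimax/Mazur closure.
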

\begin{remark}
$\mathfrak{I}$ is convex, but it may not be lower semicontinuous. Therefore, $\mathfrak{I}^{**}$ is not a-priori equal to $\mathfrak{I}$. See Appendix A for a detailed explanation.
\end{remark}

We start Section \ref{birboyutadonus} by deducing the quenched LDP for the mean velocity of the particle by an application of the contraction principle. For any $\mu\in M_1(\Omega\times\mathcal{R})$, set
\begin{equation}
\xi_{\mu}:=\int\sum_{z\in\mathcal{R}}\mathrm{d}\mu(\omega,z)z.\label{ximu}
\end{equation} For any $\xi\in\mathbb{R}^d$, define
\begin{equation}
A_\xi:=\{\mu\in M_1(\Omega\times\mathcal{R}):\xi_{\mu}=\xi\}.\label{Axi}
\end{equation} The corollary below follows immediately from Theorem \ref{level2LDP} and reproduces the central result of \cite{jeffrey}.

\begin{corollary}\label{level1LDP}
If there exists $\alpha>0$ such that (\ref{kimimvarki}) holds for each $z\in\mathcal{R}$, then $(P_o^\omega(\frac{X_n}{n}\in\cdot))_{n\geq1}$ satisfy the LDP for $\mathbb{P}$-a.e.\ $\omega$. The good rate function $I$ is given by
\begin{eqnarray}
I(\xi)&=&\inf_{\mu\in A_\xi} \mathfrak{I}^{**}(\mu)\label{level1rate}\\
&=&\inf_{\mu\in A_\xi} \mathfrak{I}(\mu)\label{level1ratetilde}
\end{eqnarray} where $\mathfrak{I}$ and $A_\xi$ are defined in (\ref{level2ratetilde}) and (\ref{Axi}), respectively. $I$ is convex.
\end{corollary}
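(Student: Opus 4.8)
The plan is to derive everything from Theorem~\ref{level2LDP} by the contraction principle. First I would introduce the map $\Xi\colon M_1(\Omega\times\mathcal{R})\to\mathbb{R}^d$ defined by $\Xi(\mu):=\xi_\mu$ with $\xi_\mu$ as in \eqref{ximu}. Since $\mathcal{R}$ is finite, $(\omega,z)\mapsto z$ is a bounded continuous function on $\Omega\times\mathcal{R}$, so $\Xi$ is continuous for the weak topology on $M_1(\Omega\times\mathcal{R})$, and it is the restriction of a linear map on finite signed measures, hence affine. Because the walk starts at $X_0=o$, a telescoping sum gives
\[
\Xi(\nu_{n,X})=\frac{1}{n}\sum_{k=0}^{n-1}(X_{k+1}-X_k)=\frac{X_n}{n},
\]
so that the law of $X_n/n$ under $P_o^\omega$ is exactly the image under $\Xi$ of the law of $\nu_{n,X}$ under $P_o^\omega$.

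With this in hand, Theorem~\ref{level2LDP} and the contraction principle applied to the continuous map $\Xi$ show at once that, for $\mathbb{P}$-a.e.\ $\omega$, the family $(P_o^\omega(\frac{X_n}{n}\in\cdot))_{n\geq1}$ satisfies the LDP with good rate function
\[
I(\xi)=\inf\{\mathfrak{I}^{**}(\mu):\Xi(\mu)=\xi\}=\inf_{\mu\in A_\xi}\mathfrak{I}^{**}(\mu),
\]
which is \eqref{level1rate}. Convexity of $I$ is then immediate: $\mathfrak{I}^{**}$ is convex and, $\Xi$ being affine, $\lambda\mu_1+(1-\lambda)\mu_2\in A_{\lambda\xi_1+(1-\lambda)\xi_2}$ whenever $\mu_i\in A_{\xi_i}$, so $I(\lambda\xi_1+(1-\lambda)\xi_2)\leq\lambda I(\xi_1)+(1-\lambda)I(\xi_2)$.

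It remains to prove \eqref{level1ratetilde}, namely $\inf_{A_\xi}\mathfrak{I}^{**}=\inf_{A_\xi}\mathfrak{I}$. The inequality ``$\leq$'' is free, since the biconjugate satisfies $\mathfrak{I}^{**}\leq\mathfrak{I}$ pointwise. For ``$\geq$'' I would use that $\mathrm{epi}\,\mathfrak{I}^{**}$ is the closed convex hull of $\mathrm{epi}\,\mathfrak{I}$: fixing $\xi$ with $I(\xi)<\infty$ and $\delta>0$, choose $\mu_0\in A_\xi$ with $\mathfrak{I}^{**}(\mu_0)\leq I(\xi)+\delta$; then there exist finitely many $\nu_i$ with $\mathfrak{I}(\nu_i)<\infty$ and weights $\lambda_i\geq0$, $\sum_i\lambda_i=1$, such that $\sum_i\lambda_i\mathfrak{I}(\nu_i)\leq I(\xi)+2\delta$ and $\mu_\delta:=\sum_i\lambda_i\nu_i$ is weakly close to $\mu_0$. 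Convexity of $\mathfrak{I}$ then produces a single $\mu_\delta\in M_{1,s}^{\ll}(\Omega\times\mathcal{R})$ with $\mathfrak{I}(\mu_\delta)\leq I(\xi)+2\delta$ and, by continuity of $\Xi$, $\Xi(\mu_\delta)\to\xi$ as $\delta\to0$.

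The hard part will be to pass from $\Xi(\mu_\delta)\to\xi$ to the exact constraint $\Xi(\mu)=\xi$ --- equivalently, to show that the convex function $\xi\mapsto\inf_{A_\xi}\mathfrak{I}$ is lower semicontinuous. (A short conjugacy computation shows it has the same Fenchel--Legendre transform as $I$, so once it is known to be lower semicontinuous it must coincide with $I=I^{**}$.) When $\xi$ lies in the interior of the effective domain $\{\Xi(\mu):\mathfrak{I}(\mu)<\infty\}$ I expect this to be routine: mix $\mu_\delta$ with a small amount of a fixed finite-entropy measure of slightly different velocity to cancel the discrepancy $\Xi(\mu_\delta)-\xi$ while altering $\mathfrak{I}$ by only $o(1)$, using the finiteness of $\mathcal{R}$. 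At the boundary of this domain the argument is delicate and will require a closer look at the structure of $M_{1,s}^{\ll}(\Omega\times\mathcal{R})$ and of $\mathfrak{I}$, in the spirit of the analysis carried out in Appendix~A.
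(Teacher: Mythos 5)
Your derivation of the LDP itself, of formula (\ref{level1rate}) via the contraction principle through the continuous affine map $\Xi(\mu)=\xi_\mu$, and of the convexity of $I$ is correct and is the same as the paper's.

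The gap is in the second equality (\ref{level1ratetilde}). You prove the easy direction $\inf_{A_\xi}\mathfrak{I}^{**}\le\inf_{A_\xi}\mathfrak{I}$, but then set out on a convex-combination/interpolation scheme for the reverse and correctly observe that it does not close: you cannot upgrade $\Xi(\mu_\delta)\to\xi$ to $\Xi(\mu)=\xi$ without further work, and you flag the boundary of the effective domain as unresolved. That approximation route is unnecessary. The parenthetical remark you make --- that $J(\xi):=\inf_{\mu\in A_\xi}\mathfrak{I}(\mu)$ has the same Fenchel--Legendre transform as $I$ --- is in fact the paper's entire argument, and it should have been the main line of attack. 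Concretely, with $f_\eta(\omega,z):=\langle z,\eta\rangle\in C_b(\Omega\times\mathcal{R})$ one has $\langle\eta,\xi_\mu\rangle=\langle f_\eta,\mu\rangle$, so
\begin{equation*}
J^*(\eta)=\sup_{\xi}\sup_{\mu\in A_\xi}\bigl\{\langle\eta,\xi_\mu\rangle-\mathfrak{I}(\mu)\bigr\}=\sup_{\mu}\bigl\{\langle f_\eta,\mu\rangle-\mathfrak{I}(\mu)\bigr\}=\mathfrak{I}^*(f_\eta)=\Lambda(f_\eta),
\end{equation*}
and the identical computation with $\mathfrak{I}^{**}$ in place of $\mathfrak{I}$ gives $I^*(\eta)=\mathfrak{I}^{***}(f_\eta)=\mathfrak{I}^*(f_\eta)=\Lambda(f_\eta)$. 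Hence $I^*\equiv J^*$, and the paper concludes $I\equiv J$ from the convexity of both functions on $\mathbb{R}^d$. This is immediate once Theorem~\ref{LMGF} is available and bypasses the whole interpolation machinery. You are right that passing from $I^*=J^*$ to $I=J$ uses $J=J^{**}$, i.e.\ lower semicontinuity of $J$, which the paper takes for granted from convexity; if you want to be scrupulous this is the one point to examine. But the conjugacy computation, not the epigraph/mixing argument, is the proof, and it renders the final two paragraphs of your proposal moot.
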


One would like to get a more explicit expression for the rate function $I$. This is not an easy task in general. $M_1(\Omega\times\mathcal{R})$ is compact (when equipped with the weak topology), $A_\xi$ is closed, and $\mathfrak{I}^{**}$ is lower semicontinuous. Therefore, the infimum in (\ref{level1rate}) is attained. However, due to the possible lack of lower semicontinuity of $\mathfrak{I}$, the infimum in (\ref{level1ratetilde}) may not be attained. %Below, we propose an Ansatz and show that whenever an element of $A_\xi$ fits this Ansatz, it is the unique minimizer of (\ref{level1ratetilde}). Let us start by defining a class of functions.
\begin{definition} \label{K}
A measurable function $F:\Omega\times\mathcal{R}\rightarrow\mathbb{R}$ is said to be in class $\mathcal{K}$ if it satisfies the following conditions:
\begin{description}
\item[Moment] For each $z\in\mathcal{R}$, $F(\cdot,z)\in\bigcup_{\alpha>0}L^{d+\alpha}(\mathbb{P})$.
\item[Mean zero] For each $z\in\mathcal{R}$, $\mathbb{E}\left[F(\cdot,z)\right]=0$.
\item[Closed loop] For $\mathbb{P}$-a.e.\ $\omega$ and any finite sequence $(x_{k})_{k=0}^n$ in $\mathbb{Z}^d$ such that $x_{k+1}-x_k\in\mathcal{R}$ and $x_0=x_n$, \[\sum_{k=0}^{n-1}F(T_{x_k}\omega,x_{k+1}-x_k)=0.\]
\end{description}
\end{definition}
\noindent The following lemma provides an Ansatz and states that whenever an element of $A_\xi$ fits this Ansatz, it is the unique minimizer of (\ref{level1ratetilde}). Its proof concludes Section \ref{birboyutadonus}.

\begin{lemma}\label{lagrange}
For any $\xi\in\mathbb{R}^d$, if there exists a $\mu_\xi\in A_\xi\cap M_{1,s}^{\ll}(\Omega\times\mathcal{R})$ such that \[\mathrm{d}\mu_\xi(\omega,z)=\mathrm{d}(\mu_\xi)^1(\omega) \pi(0,z)\mathrm{e}^{\langle\theta,z\rangle+F(\omega,z)+ r}\] for some $\theta\in\mathbb{R}^d$, $F\in\mathcal{K}$ and $r\in\mathbb{R}$, then $\mu_\xi$ is the unique minimizer of (\ref{level1ratetilde}).
\end{lemma}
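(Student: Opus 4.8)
The plan is to show that under the stated Ansatz, $\mu_\xi$ achieves the value $\mathfrak{I}(\mu_\xi) = \langle\theta,\xi\rangle + r$ and that this is a lower bound for $\mathfrak{I}(\mu)$ over all $\mu \in A_\xi$, with equality only at $\mu_\xi$. First I would compute $\mathfrak{I}(\mu_\xi)$ directly from the Ansatz: since $\frac{\mathrm{d}\mu_\xi(\omega,z)}{\mathrm{d}(\mu_\xi)^1(\omega)\pi(0,z)} = \mathrm{e}^{\langle\theta,z\rangle+F(\omega,z)+r}$, the logarithm in \eqref{level2ratetilde} becomes $\langle\theta,z\rangle + F(\omega,z) + r$, so
\[
\mathfrak{I}(\mu_\xi) = \int_\Omega\sum_{z\in\mathcal{R}}\mathrm{d}\mu_\xi(\omega,z)\bigl(\langle\theta,z\rangle + F(\omega,z) + r\bigr) = \langle\theta,\xi_{\mu_\xi}\rangle + r + \int_\Omega\sum_{z\in\mathcal{R}}\mathrm{d}\mu_\xi(\omega,z)F(\omega,z),
\]
using $\int\sum_z \mathrm{d}\mu_\xi(\omega,z)z = \xi$. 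The third term should vanish: I would argue that the closed-loop property of $F$, together with $(\mu_\xi)^1 = (\mu_\xi)^2$ (i.e.\ $\mu_\xi$ viewed in $M_1(\Omega\times\Omega)$ has equal marginals, so it defines a stationary chain on $\Omega$), forces $\int\sum_z F\,\mathrm{d}\mu_\xi = 0$ — this is the analogue of a "gradient-like function integrates to zero against a stationary measure" statement, which one establishes by writing $F(T_{x_k}\omega,x_{k+1}-x_k)$ as a telescoping difference $g(T_{x_{k+1}}\omega) - g(T_{x_k}\omega)$ along loops (existence of such a $g$ is exactly what the closed-loop condition gives, up to the usual caveat that $g$ need only be defined locally/measurably on orbits) and then using stationarity of $(\mu_\xi)^1$ under the induced dynamics. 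So $\mathfrak{I}(\mu_\xi) = \langle\theta,\xi\rangle + r$.

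Next I would prove the lower bound. For arbitrary $\mu \in A_\xi$ with $\mathfrak{I}(\mu) < \infty$, so $\mu \in M_{1,s}^{\ll}(\Omega\times\mathcal{R})$, write $f(\omega,z) := \frac{\mathrm{d}\mu(\omega,z)}{\mathrm{d}(\mu)^1(\omega)}$ and use the entropy inequality / convexity of $t\mapsto t\log t$. The clean way is to insert the Ansatz as a reference:
\[
\mathfrak{I}(\mu) = \int_\Omega\sum_{z\in\mathcal{R}}\mathrm{d}\mu(\omega,z)\log\frac{\mathrm{d}\mu(\omega,z)}{\mathrm{d}(\mu)^1(\omega)\pi(0,z)\mathrm{e}^{\langle\theta,z\rangle+F(\omega,z)+r}} + \int_\Omega\sum_{z\in\mathcal{R}}\mathrm{d}\mu(\omega,z)\bigl(\langle\theta,z\rangle+F(\omega,z)+r\bigr).
\]
The first integral is a relative entropy of $\mu$ with respect to the (sub-)measure $\mathrm{d}(\mu)^1(\omega)\pi(0,z)\mathrm{e}^{\langle\theta,z\rangle+F(\omega,z)+r}$, which by Jensen is $\geq -\log\int_\Omega\sum_z \mathrm{d}(\mu)^1(\omega)\pi(0,z)\mathrm{e}^{\langle\theta,z\rangle+F(\omega,z)+r}$; since $(\mu_\xi)^1$-based normalization gives exactly $1$, I need the same normalization to hold with $(\mu)^1$ in place of $(\mu_\xi)^1$. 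Here is where the structure of $F$ and $\theta$ must be exploited: the function $\omega\mapsto\sum_z\pi(0,z)\mathrm{e}^{\langle\theta,z\rangle+F(\omega,z)+r}$ should be identically $1$ (or at least have $\mathbb{P}$-integral, hence $(\mu)^1$-integral, equal to $1$), because the Ansatz $\sum_z\frac{\mathrm{d}\mu_\xi(\omega,z)}{\mathrm{d}(\mu_\xi)^1(\omega)} = 1$ says precisely that $\sum_z\pi(0,z)\mathrm{e}^{\langle\theta,z\rangle+F(\omega,z)+r} = 1$ for $\mathbb{P}$-a.e.\ $\omega$ (using $(\mu_\xi)^1\ll\mathbb{P}$ with positive density, or at least on the support). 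Granting this, the first integral is $\geq 0$, and the second equals $\langle\theta,\xi\rangle + r + \int\sum_z F\,\mathrm{d}\mu$; the last term again vanishes because $\mu\in M_{1,s}^{\ll}$ has $(\mu)^1=(\mu)^2$ and $F\in\mathcal{K}$, by the same telescoping/stationarity argument as before (this is the step that genuinely uses closed-loop on a general $\mu$, not just $\mu_\xi$). Hence $\mathfrak{I}(\mu)\geq\langle\theta,\xi\rangle+r = \mathfrak{I}(\mu_\xi)$.

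Finally, uniqueness: equality in the lower bound forces the relative entropy term to vanish, which by strict convexity of $t\log t$ forces $\frac{\mathrm{d}\mu(\omega,z)}{\mathrm{d}(\mu)^1(\omega)} = \pi(0,z)\mathrm{e}^{\langle\theta,z\rangle+F(\omega,z)+r}$ $(\mu)^1$-a.s.\ for each $z$; combined with $(\mu)^1\ll\mathbb{P}$ this pins down the conditional law of $\mu$ given the first coordinate to agree with that of $\mu_\xi$. It remains to see the first marginal is also forced: the equal-marginals constraint $(\mu)^1=(\mu)^2$ together with this conditional law says $(\mu)^1$ is an invariant measure for the kernel $\hat\pi(\omega,z)=\pi(0,z)\mathrm{e}^{\langle\theta,z\rangle+F(\omega,z)+r}$ that is absolutely continuous w.r.t.\ $\mathbb{P}$; by ergodicity of $\mathbb{P}$ such an invariant measure is unique, hence $(\mu)^1 = (\mu_\xi)^1$ and therefore $\mu=\mu_\xi$. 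I expect the main obstacle to be the rigorous justification that $\int\sum_z F\,\mathrm{d}\mu = 0$ for every admissible $\mu$ — i.e.\ that closed-loop functions integrate to zero against any measure in $M_{1,s}^{\ll}$ — since the telescoping potential $g$ need not be globally defined and one must argue via cycles/approximation; and secondarily, making precise the "normalization $=1$ $\mathbb{P}$-a.s." claim, which hinges on $(\mu_\xi)^1$ having a strictly positive density relative to $\mathbb{P}$ so that an a.s.\ statement for $(\mu_\xi)^1$ transfers to $\mathbb{P}$.
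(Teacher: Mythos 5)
Your decomposition is exactly the paper's: you split $\mathfrak{I}(\mu)$ for $\mu\in A_\xi\cap M_{1,s}^{\ll}(\Omega\times\mathcal{R})$ into $\langle\theta,\xi\rangle + r$, plus $\int\sum_z F\,\mathrm{d}\mu$, plus a nonnegative relative-entropy term measuring the $\omega$-wise discrepancy between $\mathrm{d}\mu/\mathrm{d}(\mu)^1$ and $\mathrm{d}\mu_\xi/\mathrm{d}(\mu_\xi)^1$, and then argue uniqueness through Kozlov's uniqueness of the absolutely continuous invariant measure. (Your Jensen step can be made slightly cleaner by applying it conditionally on $\omega$ against the probability kernel $\mathrm{d}\mu_\xi/\mathrm{d}(\mu_\xi)^1$, rather than against the unnormalized reference and separately arguing $\sum_z\pi(0,z)\mathrm{e}^{\langle\theta,z\rangle+F+r}=1$ — though your normalization argument via $(\mu_\xi)^1\sim\mathbb{P}$ is also valid.)

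The genuine gap is the one you flag yourself: you do not actually establish $\int\sum_z F\,\mathrm{d}\mu=0$. Your proposed route — write $F(T_{x_k}\omega,x_{k+1}-x_k)=g(T_{x_{k+1}}\omega)-g(T_{x_k}\omega)$ for a potential $g$ and invoke stationarity — does not go through as stated, because the closed-loop condition only makes $F$ a \emph{cocycle} (it yields a well-defined $f:\Omega\times\mathbb{Z}^d\to\mathbb{R}$ with $f(\omega,0)=0$ and the additivity $f(\omega,x+y)=f(\omega,x)+f(T_x\omega,y)$), not a \emph{coboundary}: there need not exist any measurable $g:\Omega\to\mathbb{R}$ with $F(\omega,z)=g(T_z\omega)-g(\omega)$. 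The paper instead combines two facts. First, since $(\mu)^1=(\mu)^2\ll\mathbb{P}$ and $\mathrm{d}\mu/\mathrm{d}(\mu)^1>0$ on $U$, Lemma~\ref{Kozlov} makes the environment chain with kernel $\hat\pi=\mathrm{d}\mu/\mathrm{d}(\mu)^1$ and initial law $(\mu)^1$ stationary and \emph{ergodic}, so the Birkhoff ergodic theorem gives $\frac{1}{n}\sum_{k=0}^{n-1}F(T_{X_k}\omega,X_{k+1}-X_k)\to\int\sum_z F\,\mathrm{d}\mu$ a.s. Second, Lemma~\ref{GRR} (the Garsia--Rodemich--Rumsey sublinear-growth estimate, which is where the moment condition $F(\cdot,z)\in L^{d+\alpha}$ enters) forces the same Ces\`aro averages to tend to $0$ along \emph{every} admissible path, hence the integral vanishes. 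Without this ergodic-theorem-plus-sublinearity argument — which is a real piece of work, not a refinement of telescoping — your lower bound is not established, and neither is the computation $\mathfrak{I}(\mu_\xi)=\langle\theta,\xi\rangle+r$.
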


In Sections \ref{vandiseksin} and \ref{dolapdere}, we show that the recipe given in Lemma \ref{lagrange} works when $d=1$. We make the following assumptions:
\begin{enumerate}
\item [(A1)] There exists an $\alpha>0$ such that $\int|\log\pi(0,z)|^{1+\alpha}\mathrm{d}\mathbb{P}<\infty$ for each $z\in\mathcal{R}$.
%\item [(A1)] The support of $\mathbb{P}$ is not a singleton, i.e., the environment is not deterministic.
\item [(A2)] There exists a $\delta>0$ such that $\mathbb{P}(\pi(0,\pm 1)\geq\delta)=1$. This is called ``uniform ellipticity".
\end{enumerate}
For every $y\in\mathbb{Z}$,
\begin{equation}
\tau_y:=\inf\{k\geq0:X_k\geq y\}\quad\mbox{and}\quad\bar{\tau}_y:=\inf\{k\geq0:X_k\leq y\}\label{nihatgomermis}
\end{equation} denote the right and left passage times of the walk. The following lemma is central to our argument.

\begin{lemma}\label{lifeisrandom}
Suppose $d=1$. Under the assumptions (A1) and (A2), the limits
$$\lambda(r):=\lim_{n\to\infty}\frac{1}{n}\log E_o^\omega\left[\mathrm{e}^{r\tau_n},\tau_n<\infty\right]\quad\mbox{and}\quad\bar{\lambda}(r):=\lim_{n\to\infty}\frac{1}{n}\log E_o^\omega\left[\mathrm{e}^{r\bar{\tau}_{-n}},\bar{\tau}_{-n}<\infty\right]$$ exist for $\mathbb{P}$-a.e. $\omega$. The functions $r\mapsto\lambda(r)$ and $r\mapsto\bar{\lambda}(r)$ are
\begin{enumerate}
\item [(i)] deterministic,
\item [(ii)] finite precisely on $(-\infty,r_c]$ for some $r_c\in[0,\infty)$, and
\item [(iii)] strictly convex and differentiable on $(-\infty,r_c)$.
\end{enumerate}
The constants $$\xi_c:=\left(\lambda'(r_c-)\right)^{-1}\quad\mbox{and}\quad\bar{\xi}_c:=-\left(\bar{\lambda}'(r_c-)\right)^{-1}$$
satisfy $-B<\bar{\xi}_c\leq0\leq\xi_c<B$. 
\end{lemma}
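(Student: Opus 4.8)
The plan is to establish existence and basic properties of $\lambda$ and $\bar\lambda$ via a subadditivity argument, and then upgrade this to strict convexity, differentiability, and the constraints on $\xi_c,\bar\xi_c$.

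\textbf{Existence and determinism.} Fix $r\in\mathbb{R}$ and define $g_n(\omega):=\log E_o^\omega[\mathrm{e}^{r\tau_n},\tau_n<\infty]$. The plan is to decompose the passage from $0$ to level $n+m$ through level $n$: conditioning on the environment seen at the first hitting time of $\{y\ge n\}$ and using that, by uniform ellipticity (A2), the walk overshoots level $n$ by at most $B-1$, one gets a near-supermultiplicative relation $g_{n+m}(\omega)\ge g_n(\omega)+\min_{0\le j<B}g_m(T_{n+j}\omega)-C$ for a deterministic constant $C$ (coming from the $\delta$-ellipticity cost of forcing the walk to the relevant site). A symmetric inequality bounds $g_{n+m}$ from above. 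Taking logarithms and invoking Kingman's subadditive ergodic theorem — whose integrability hypothesis is supplied by (A1), since a single step contributes at most $\max_z|\log\pi(0,z)|\in L^{1+\alpha}(\mathbb{P})$ and passage times to level $1$ have exponential tails by (A2) — yields the $\mathbb{P}$-a.s.\ limit $\lambda(r)$, and ergodicity of $\mathbb{P}$ under $(T_z)$ forces it to be deterministic. The argument for $\bar\lambda$ is identical after reflecting $\mathbb{Z}$. This proves (i).

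\textbf{Convexity, the critical value, and finiteness.} Convexity of $r\mapsto\lambda(r)$ is inherited from Hölder's inequality applied to $E_o^\omega[\mathrm{e}^{r\tau_n}]$ at each finite $n$, which passes to the limit. Since $\tau_n\ge n$ under $\{\tau_n<\infty\}$, we have $\lambda(r)\ge r$, so $\lambda$ is nonnegative for $r\ge0$ and in fact $\lambda(0)\ge 0$; but $\lambda(0)=\lim\frac1n\log P_o^\omega(\tau_n<\infty)\le 0$, hence $\lambda(0)=0$. Monotonicity (clear from $\tau_n<\infty$, $\tau_n\ge 0$... more precisely $\lambda$ is nondecreasing since $\mathrm{e}^{r\tau_n}$ is) together with convexity shows $\{r:\lambda(r)<\infty\}$ is an interval of the form $(-\infty,r_c]$ or $(-\infty,r_c)$; the closedness at $r_c$ (when $r_c<\infty$) follows from monotone convergence in $r\uparrow r_c$. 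That $r_c<\infty$: choosing $r$ large and using that the walk must spend at least $n/B$ steps, while each step costs a factor at most $\delta^{-1}$ under (A2) in the wrong direction and the transition probabilities are bounded by $1$, one sees $g_n\le rn - (n/B)|\log(\text{something})|$... actually the cleaner route is: if the walk were uniformly ballistic to the right there would be nothing to prove, but in general $r_c$ is finite because for $r$ large enough $\mathrm{e}^{r\tau_n}$ integrated against the quenched law diverges in exponential rate faster than any subadditive bound permits — one bounds below by the single path that steps $+1$ each time, giving $g_n\ge rn+\sum_{k=0}^{n-1}\log\pi(T_k\omega,1)\sim (r+\mathbb{E}\log\pi(0,1))n$, which is already $+\infty$ in rate... no: this shows $\lambda(r)\ge r+\mathbb{E}\log\pi(0,1)$ is finite-valued. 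Finiteness of $r_c$ instead comes from an \emph{upper} bound forcing $\lambda(r)=\infty$ for large $r$: the walk can linger near level $n$ revisiting sites, and the number of paths of length $\ell$ reaching level $n$ grows like $C^\ell$ while the weight is $\mathrm{e}^{r\ell}$ times probabilities; summing over $\ell$ diverges once $r$ exceeds a threshold determined by the return probability to a neighborhood, which is bounded below by a deterministic constant via (A2). This gives (ii).

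\textbf{Strict convexity, differentiability, and the velocity bounds.} On $(-\infty,r_c)$, strict convexity and differentiability are the genuinely delicate points. The plan is to represent $\lambda$ as the top of the spectrum of a family of tilted operators and use that the environment is genuinely random (otherwise it reduces to a classical computation); concretely, one adapts the argument of \cite{CGZ}: differentiability follows from the fact that a subadditive limit of smooth convex functions, each of which is the logarithmic moment generating function of a positive random variable with all exponential moments up to $r_c$, cannot have a corner in the interior because a corner would force, via the Gärtner--Ellis theorem, an atom in the associated rate function for $\tau_n/n$, contradicting the uniform ellipticity which spreads mass. Strict convexity similarly follows: a flat piece of $\lambda$ would mean $\tau_n/n$ has a deterministic limit on an interval of tilts, again incompatible with (A2) producing genuine fluctuations. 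Finally, for the velocity constants: $\lambda$ is finite and differentiable at $r$ slightly below $r_c$, and $\lambda'(r)\ge 1$ always (since $\lambda(r)\ge r$ with equality impossible to beat in slope below... rather $\lambda(r)-\lambda(0)\ge r$ gives $\lambda'\ge$ the average slope which tends to $\ge 1$); hence $\lambda'(r_c-)\in[1,\infty]$ and $\xi_c=(\lambda'(r_c-))^{-1}\in[0,1]\subset[0,B)$. Strict inequality $\xi_c<B$ needs $\lambda'(r_c-)>1/B$, which holds because the walk cannot move right faster than $B$ per step so $\tau_n\ge n/B$ deterministically, forcing $\lambda(r)\ge r/B$ near $r_c$ hence $\lambda'(r_c-)\ge 1/B$, and the inequality is strict by the same non-degeneracy. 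The bound $\xi_c\ge0$ is automatic; $\bar\xi_c\le0$ and $-B<\bar\xi_c$ follow symmetrically. The main obstacle will be establishing strict convexity and differentiability rigorously — one cannot simply differentiate the subadditive limit — so the real content is the Gärtner--Ellis / uniform-ellipticity argument that rules out corners and flat pieces, essentially the bounded-jump generalization of the one-dimensional analysis in \cite{CGZ}.
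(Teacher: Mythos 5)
Your proposal diverges from the paper's route in ways that leave the genuinely hard parts — strict convexity and differentiability — without a valid argument, and you say as much yourself.

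On existence, your subadditivity plan is a plausible alternative in spirit, but the near-supermultiplicativity you write down, $g_{n+m}(\omega)\ge g_n(\omega)+\min_{0\le j<B}g_m(T_{n+j}\omega)-C$, is not of the form Kingman's theorem can be applied to directly (the index shift $T_{n+j}$ with a $\min$ over $j$ depending on $\omega$ does not fit the stationary subadditive cocycle structure); one has to work to fix this, which is precisely what the overshoot makes delicate for bounded jumps. The paper instead constructs, via the finite-volume Doob $h$-transform and an exponential-contraction coupling (Lemma \ref{muhacir}), a harmonic function $u_r(\omega,x)=\lim_n E_x^\omega[\mathrm{e}^{r\tau_n},\tau_n<\infty]/E_o^\omega[\mathrm{e}^{r\tau_n},\tau_n<\infty]$, which factorizes multiplicatively, $u_r(\omega,n)=\prod_{i<n}u_r(T_i\omega,1)$; existence of $\lambda(r)=-\mathbb{E}[\log u_r(\cdot,1)]$ is then the ergodic theorem plus the two-sided bound of Lemma \ref{Qgot} relating $u_r(\omega,n)$ to $E_o^\omega[\mathrm{e}^{r\tau_n},\tau_n<\infty]$. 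This construction is not incidental: it is the single object that also delivers the differentiability and strict convexity.

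The gap in your proposal is the treatment of (iii). Your claim that a corner in $\lambda$ would ``force, via Gärtner--Ellis, an atom in the rate function for $\tau_n/n$, contradicting uniform ellipticity'' is not a theorem: a corner in a convex limit $\lambda$ is perfectly compatible with an LDP (it just produces a linear stretch in the Fenchel--Legendre dual), and limits of smooth convex functions routinely develop corners. Likewise ``a flat piece of $\lambda$ would mean $\tau_n/n$ has a deterministic limit on an interval of tilts, incompatible with (A2)'' is not an argument; flat pieces of $\Lambda$ correspond to corners of $\Lambda^*$, which ellipticity does not by itself exclude. In the nearest-neighbor case of \cite{CGZ} one has the exact identity $\lambda(r)=\mathbb{E}[\log E_o^\omega[\mathrm{e}^{r\tau_1},\tau_1<\infty]]$ because there is no overshoot, and differentiability and strict convexity follow by differentiating under the integral; for bounded jumps that identity fails, which is exactly the obstruction you do not address. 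The paper instead proves differentiability by quantitative bounds $|\lambda_n'(r,\omega)-E_o^{r,\omega}[\tau_n]|\le W_1(r)/G(T_n\omega)$ and an analogous second-derivative bound (Lemmas \ref{tornacihuso}--\ref{sakinnazik}), obtained by controlling the influence of the boundary at level $n$ under the tilted kernel $\hat\pi_r$ via Lemma \ref{muhacir}; and it proves strict convexity not analytically but via the uniqueness of the Ansatz minimizer in Lemma \ref{lagrange}: if $\lambda'(r_1)=\lambda'(r_2)$ then the two tilted invariant measures $\mu_\xi$ both realize the unique minimizer, forcing $u_{r_1}=u_{r_2}$ and hence $r_1=r_2$.

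Two smaller issues: your claim $\lambda(0)=0$ is false in general ($\lambda(0)=\lim_n\frac1n\log P_o^\omega(\tau_n<\infty)$ is strictly negative when the walk is transient to the left); and your $r_c<\infty$ argument is muddled, whereas the paper's is a one-line insertion of a $0\to -1\to 0$ excursion before $\tau_1$, giving $(\delta\mathrm{e}^r)^2\le 1$ hence $r_c\le -\log\delta$. The bound $\xi_c<B$ does come from $\tau_n\ge n/B$ giving $\lambda'(r)\ge B^{-1}$, as you say, but the strict inequality is a consequence of the strict monotonicity of $\lambda'$ (itself a consequence of strict convexity), not of a vague ``non-degeneracy''; so it cannot be settled before (iii) is.
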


For every $\xi\in(-B,\bar{\xi}_c)\cup(\xi_c,B)$, we construct a $\mu_\xi$ that fits the Ansatz given in Lemma \ref{lagrange}. Substituting it in (\ref{level2ratetilde}), we get an explicit expression for (\ref{level1ratetilde}).

\begin{theorem}\label{explicitformulah}
Suppose $d=1$. Under the assumptions (A1) and (A2),
\begin{equation}\label{nnayni}
I(\xi)=\left\{
\begin{array}{ll}
\sup_{r\in\mathbb{R}}\{r-\xi\lambda(r)\}&\mbox{if }\xi>0,\\
\sup_{r\in\mathbb{R}}\{r+\xi\bar{\lambda}(r)\}&\mbox{if }\xi<0,\\
r_c&\mbox{if }\xi=0
\end{array}\right.
\end{equation}
with $r_c$ given in Lemma \ref{lifeisrandom}. The function $\xi\mapsto I(\xi)$ is
\begin{enumerate}
\item [(i)] affine linear on $[\bar{\xi}_c,0]$ and $[0,\xi_c]$,
\item [(ii)] strictly convex on $(-B,\bar{\xi}_c)$ and $(\xi_c,B)$, and
\item [(iii)] differentiable on $(-B,0)$ and $(0,B)$.
\end{enumerate}
\end{theorem}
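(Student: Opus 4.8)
The plan is to reduce the mean-velocity LDP to the already-established passage-time large deviations of Lemma \ref{lifeisrandom} and then verify that the resulting candidate measures fit the Ansatz of Lemma \ref{lagrange}. First I would treat the formula \eqref{nnayni} for $\xi\neq 0$. By monotonicity of $\tau_n$ in $n$, on the event $\{X_m\geq 0\}$ one has $\{X_m/m\geq \xi\}$ essentially equivalent, for $\xi>0$, to $\{\tau_{\lceil m\xi\rceil}\leq m\}$; so a standard Cram\'er-type inversion (as in \cite{CGZ}) turns the quenched LDP for $(\frac1n\tau_n)$ with rate function the Legendre transform of $\lambda$ into a quenched LDP for $(X_m/m)$ on $(\xi_c,B)$ with rate function $\sup_r\{r-\xi\lambda(r)\}$. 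Here Lemma \ref{lifeisrandom}(iii) (strict convexity and differentiability on $(-\infty,r_c)$) gives that this Legendre transform is itself strictly convex and differentiable on $(\xi_c,B)$, which yields parts (ii) and (iii) of the theorem on the positive side; the negative side is symmetric using $\bar\lambda$. The value $I(0)=r_c$ follows by continuity of $I$ (which is convex by Corollary \ref{level1LDP}, hence continuous on the interior of its domain) together with the fact that $\sup_r\{r-\xi\lambda(r)\}\to\sup_{r\le r_c}r=r_c$ as $\xi\downarrow\xi_c$ when $\xi_c=0$, and the analogous left limit; the affine pieces on $[\bar\xi_c,0]$ and $[0,\xi_c]$ come out because on those intervals the supremum in the Legendre transform is attained at the boundary $r=r_c$, giving $I(\xi)=r_c-\xi\lambda(r_c)$ (resp.\ $r_c+\xi\bar\lambda(r_c)$), an affine function of $\xi$.

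The real content — and the step I expect to be the main obstacle — is showing that this Legendre-transform expression actually equals the variational formula \eqref{level1ratetilde}, rather than merely giving an upper or lower bound. For this I would, for each $\xi\in(-B,\bar\xi_c)\cup(\xi_c,B)$, pick the unique $r=r(\xi)$ with $\lambda'(r)=1/\xi$ (using strict convexity and differentiability from Lemma \ref{lifeisrandom}(iii)) and perform a Doob $h$-transform on the environment chain: take $h_r(\omega):=E_o^\omega[\mathrm{e}^{r\tau_1},\tau_1<\infty]$-type tilted quantities built from the passage-time generating function, and define a tilted kernel $\hat\pi_r(\omega,z)\propto \pi(0,z)\,\mathrm{e}^{r\,(\text{first-passage cost of the step }z)}\,(\text{ratio of }h_r\text{'s})$. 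One then shows this kernel has an invariant measure $\bar\mu_\xi\ll\mathbb{P}$ on $\Omega$ (the point-of-view-of-the-particle stationary measure for the tilted walk, which exists by a standard sub/super-harmonicity argument in $d=1$, using uniform ellipticity (A2) to control the environment chain and (A1) for the moment bound), set $\mathrm{d}\mu_\xi(\omega,z):=\mathrm{d}\bar\mu_\xi(\omega)\hat\pi_r(\omega,z)$, and checks $\xi_{\mu_\xi}=\xi$.

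To invoke Lemma \ref{lagrange} I must exhibit the Radon--Nikodym density of $\mu_\xi$ against $\mathrm{d}(\mu_\xi)^1(\omega)\pi(0,z)$ in the form $\mathrm{e}^{\langle\theta,z\rangle+F(\omega,z)+r}$ with $F\in\mathcal{K}$. Taking $\theta$ to be a scalar determined by $r(\xi)$, the function $F(\omega,z)$ will be the logarithm of the $h$-transform ratio; its mean-zero property follows from invariance of $\bar\mu_\xi$ under the tilted kernel, the closed-loop (gradient) property follows because in $d=1$ any closed loop in $\mathbb{Z}$ made of steps in $\mathcal{R}$ has zero net displacement and the telescoping of the $\log h$ terms vanishes, and the moment bound $F(\cdot,z)\in L^{1+\alpha}(\mathbb{P})$ is where assumption (A1) enters — this integrability of $\log h_r$ is the technically delicate point and would be handled by the same hitting-time estimates (geometric-type tail bounds under (A2), combined with the $(1+\alpha)$-moment of $\log\pi(0,\cdot)$) used to prove Lemma \ref{lifeisrandom}. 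Given all this, Lemma \ref{lagrange} identifies $\mu_\xi$ as the unique minimizer of \eqref{level1ratetilde}, and a direct substitution into \eqref{level2ratetilde} collapses the integral to $r(\xi)-\xi\lambda(r(\xi))$, which by the choice $\lambda'(r(\xi))=1/\xi$ equals $\sup_r\{r-\xi\lambda(r)\}$ for $\xi>0$ and the analogous quantity for $\xi<0$, completing the proof.
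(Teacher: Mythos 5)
Your central construction — the Doob $h$-transform via the passage-time generating function, yielding a tilted kernel $\hat{\pi}_r$, together with an invariant density and a corrector $F_r$ of the form ``log of the $h$-ratio'' — is exactly the paper's verification of the Ansatz in Lemma~\ref{lagrange}, and the substitution yielding $\mathfrak{I}(\mu_\xi)=r(\xi)-\xi\lambda(r(\xi))$ matches (\ref{sifirladik}). Two remarks on the framing: your opening paragraph proposes a CGZ-style Cram\'er inversion from the passage-time LDP, which is a genuinely different route and not what the paper does --- the paper never re-proves the level-1 LDP from passage times, it starts from Corollary~\ref{level1LDP} and identifies the rate function by minimization. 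Also, your claim that the moment condition $F(\cdot,z)\in L^{1+\alpha}(\mathbb{P})$ is ``the technically delicate point'' and ``where (A1) enters'' is off under the hypotheses of this theorem: with (A2) in force, (\ref{cangorecek}) gives $|\log u_r(\omega,z)|\leq B\,|\log(\delta\mathrm{e}^r)|$, so $F_r$ is in fact \emph{bounded} and the moment condition is automatic. (Assumption (A1) is used elsewhere, e.g.\ for $r_c<\infty$ and in the bounds involving $G(\omega)$.)

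The genuine gap is the flat region $[\bar{\xi}_c,\xi_c]$ and, in particular, the value $I(0)=r_c$. The Ansatz construction only produces a minimizer $\mu_\xi$ when there is an $r<r_c$ with $\lambda'(r)=\xi^{-1}$, i.e.\ only for $\xi\in(\xi_c,B)$ (and symmetrically for $\xi<\bar{\xi}_c$). On $(0,\xi_c]$ you observe that the Legendre-transform \emph{expression} is affine, but you never show that $I$, defined by (\ref{level1ratetilde}), actually equals that expression there; your continuity argument for $I(0)=r_c$ is explicitly restricted to ``when $\xi_c=0$.'' The paper closes this gap with a separate two-sided bound: convexity together with $I(\xi_c)-\xi_cI'(\xi_c+)=r_c$ gives $I(0)\geq r_c$, while the reverse inequality comes from observing that for any $r<I(0)$,
\begin{equation*}
E_o^\omega\bigl[\mathrm{e}^{r\tau_1},\tau_1<\infty\bigr]=\sum_{k\geq1}\mathrm{e}^{rk}P_o^\omega(\tau_1=k)\leq\sum_{k\geq1}\mathrm{e}^{rk}P_o^\omega(1\leq X_k\leq B)\leq\sum_{k\geq1}\mathrm{e}^{(r-I(0))k+o(k)}<\infty,
\end{equation*}
using the already-established LDP upper bound for $X_k/k$, which forces $r_c\geq I(0)$. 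This identification of $r_c$ with $I(0)$ is what pins down the affine segment (with slope $I'(\xi_c+)$, hence also differentiability at $\xi_c$), and without it your proposal does not establish (\ref{nnayni}) on $[\bar{\xi}_c,\xi_c]$ or parts (i), (iii) of the theorem when $\xi_c>0$ or $\bar{\xi}_c<0$.
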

Section \ref{vandiseksin} focuses on nearest-neighbor walks under assumption (A1). In that case, the proof of Lemma \ref{lifeisrandom} is straightforward since
$$\lambda(r)=\mathbb{E}\left(\log E_o^\omega\left[\mathrm{e}^{r\tau_1},\tau_1<\infty\right]\right)\quad\mbox{and}\quad\bar{\lambda}(r)=\mathbb{E}\left(\log E_o^\omega\left[\mathrm{e}^{r\bar{\tau}_{-1}},\bar{\tau}_{-1}<\infty\right]\right).$$ Naturally, Theorem \ref{explicitformulah} is identical to the quenched LDP result of \cite{CGZ}.

The general case of walks with bounded jumps is studied in Section \ref{dolapdere} where the proofs are more technical. Theorem \ref{explicitformulah} generalizes the quenched LDP result of \cite{CGZ}, but there is a qualitative difference:
\begin{proposition}\label{Wronskian}
Suppose $d=1$. For nearest-neighbor walks, $I(\xi)=I(-\xi)+\xi\cdot(\bar{\lambda}(0)-\lambda(0))$ if $\xi\in[-1,0)$.
Such a symmetry is generally absent for walks with bounded jumps.
\end{proposition}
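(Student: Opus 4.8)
The plan is to split the statement into its two halves: first the positive identity for nearest-neighbor walks, then the counterexample showing that the identity fails in general for bounded jumps. For the nearest-neighbor identity, I would start from the explicit formula \eqref{nnayni}. For $\xi\in[-1,0)$ we have $I(\xi)=\sup_r\{r+\xi\bar\lambda(r)\}$, while $I(-\xi)=\sup_r\{r-\xi\lambda(r)\}$ since $-\xi>0$. The key is a relation between $\lambda$ and $\bar\lambda$ in the nearest-neighbor case. Using the representations $\lambda(r)=\mathbb{E}(\log E_o^\omega[\mathrm{e}^{r\tau_1},\tau_1<\infty])$ and $\bar\lambda(r)=\mathbb{E}(\log E_o^\omega[\mathrm{e}^{r\bar\tau_{-1}},\bar\tau_{-1}<\infty])$ recorded in the text, I would compute the single-step generating functions explicitly: for a nearest-neighbor walk, $E_o^\omega[\mathrm{e}^{r\tau_1},\tau_1<\infty]$ satisfies a quadratic recursion in $\omega$ because the walk must cross level $1$, and similarly for the left passage. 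The point is that reflecting the environment (replacing $\pi(0,1)\leftrightarrow\pi(0,-1)$ under the reversed shift) interchanges these two quantities up to a deterministic multiplicative correction coming from the stationary reversal of $\mathbb{P}$; taking $\log$ and $\mathbb{E}$, this correction becomes the additive constant $\bar\lambda(0)-\lambda(0)$ (note $\bar\lambda(0)=\mathbb{E}\log E_o^\omega[\bar\tau_{-1}<\infty]$, which need not vanish unless the walk is a.s.\ transient to the left). Concretely I expect the identity $\bar\lambda(r)=\lambda(r)+(\bar\lambda(0)-\lambda(0))$ after the appropriate change of variables fails — rather, the correct relation is an affine reparametrization that, fed into the two Legendre transforms above, produces exactly $I(\xi)=I(-\xi)+\xi(\bar\lambda(0)-\lambda(0))$.

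More carefully, the cleanest route avoids $\lambda,\bar\lambda$ entirely and uses the variational formula \eqref{level1ratetilde} together with the symmetry of the nearest-neighbor structure. Given $\xi\in[-1,0)$ and the minimizer $\mu_\xi\in A_\xi$, I would define a reflected measure $\tilde\mu$ by pushing $\mu_\xi$ forward under the map $(\omega,z)\mapsto(\sigma\omega,-z)$, where $\sigma$ is the reflection of $\mathbb{Z}$ (so $(\sigma\omega)_x=\omega_{-x}$ with the jump coordinates also flipped). Because $\mathcal{R}=U=\{\pm1\}$ is reflection-invariant, $\tilde\mu\in A_{-\xi}$, and one checks that $(\tilde\mu)^1=(\tilde\mu)^2$ still holds (the closed-loop / shift-invariance structure is preserved by reflection in one dimension). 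Then I would compute $\mathfrak{I}(\tilde\mu)$ from \eqref{level2ratetilde}: the entropy-type integrand $\log\frac{\mathrm{d}\mu}{\mathrm{d}(\mu)^1\pi(0,z)}$ transforms with an extra additive term $\log\frac{\pi(0,z)}{\pi(0,-z)}$ evaluated under the reflection, and integrating this term against $\tilde\mu\in A_{-\xi}$ yields precisely $\xi\cdot(\bar\lambda(0)-\lambda(0))$ once one identifies $\bar\lambda(0)-\lambda(0)=\mathbb{E}[\log\frac{\pi(0,-1)}{\pi(0,1)}]\cdot(\text{sign bookkeeping})$ — this identification is where the one-dimensional nearest-neighbor restriction is essential, since the single-step kernel has only two entries whose ratio controls the drift. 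Taking the infimum over the reflected family and using that reflection is an involution gives both inequalities, hence $I(\xi)=I(-\xi)+\xi(\bar\lambda(0)-\lambda(0))$.

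For the second half — the absence of such symmetry for bounded jumps — I would exhibit a single explicit environment. Take $d=1$, $B=2$, and a deterministic (hence trivially stationary and ergodic) environment in which, say, $\pi(0,2)$ and $\pi(0,-1)$ are the only nonzero jump probabilities, or more robustly an i.i.d.\ environment supported on two such profiles chosen so that (A1) and (A2)-type ellipticity hold for the relevant directions. The asymmetry of $\mathcal{R}=\{-2,-1,1,2\}$ under reflection is only superficial (it \emph{is} reflection-invariant), so the real obstruction is structural: when $B\ge2$, the passage-time recursions for $\lambda$ and $\bar\lambda$ are no longer governed by scalar quadratics but by higher-order relations (size-$B$ transfer structures), and reflecting the environment does not conjugate the right-passage operator to the left-passage operator up to a scalar. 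I would make this quantitative by computing $I$ near the boundary velocities $\pm B$ via Theorem~\ref{explicitformulah}(ii)–(iii): the strictly convex branches on $(\xi_c,B)$ and $(-B,\bar\xi_c)$ have, in the chosen example, different curvatures or different behavior as $\xi\to\pm B$ (the cost of forcing speed near $+2$ using only $+2$ and $-1$ jumps differs from the cost near $-2$), so no affine relation $I(\xi)=I(-\xi)+c\xi$ can hold. The main obstacle I anticipate is the bookkeeping in the first half: correctly tracking how the Radon–Nikodym derivative and the marginal-equality condition $(\mu)^1=(\mu)^2$ behave under the reflection $(\omega,z)\mapsto(\sigma\omega,-z)$, and verifying that the stationary law $\mathbb{P}$'s reversal contributes exactly the constant $\bar\lambda(0)-\lambda(0)$ and not some other functional of the environment — this requires using the closed-loop condition of class $\mathcal{K}$ (Definition~\ref{K}) to see that the only surviving discrepancy is this single scalar.
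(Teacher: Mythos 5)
Your proposal has a genuine gap in the first (nearest-neighbor) half. The relation you explicitly doubt, namely $\bar{\lambda}(r)=\lambda(r)+(\bar{\lambda}(0)-\lambda(0))$ for all $r$ where these are finite, is in fact exactly the key identity, and it is what the paper proves: both $u_r(\omega,\cdot)$ and $\bar{u}_r(\omega,\cdot)$ solve the same second-order difference equation $u(x)=\pi(x,x-1)\mathrm{e}^ru(x-1)+\pi(x,x+1)\mathrm{e}^ru(x+1)$, so the Wronskian $W_r(\omega,x)$ of these two solutions satisfies $W_r(\omega,x)=\rho(T_x\omega)W_r(\omega,x-1)$ with $\rho=\pi(0,-1)/\pi(0,1)$, while the multiplicative structure (\ref{cangorecek}) gives $W_r(\omega,1)=u_r(\omega,1)\bar{u}_r(\omega,1)W_r(T_1\omega,0)$; taking logarithms and $\mathbb{E}$, the $W$-terms cancel by shift-stationarity and one gets $\bar{\lambda}(r)-\lambda(r)=\mathbb{E}[\log\rho]$, independent of $r$, which fed into (\ref{nnayni}) yields the proposition. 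Your preferred route via the reflection $(\omega,z)\mapsto(\sigma\omega,-z)$ cannot be repaired as stated: $\mathbb{P}$ is only assumed stationary under the shifts, not reflection-invariant, so the pushforward $\tilde{\mu}$ of the minimizer has first marginal absolutely continuous with respect to $\sigma_*\mathbb{P}$ rather than $\mathbb{P}$, and generically $\mathfrak{I}(\tilde{\mu})=\infty$; worse, if one ignores this and computes formally, the identity $\pi_{\sigma\omega}(0,-z)=\pi_{\omega}(0,z)$ makes the entropy integrand invariant, so the "extra additive term" you expect does not appear and the computation would give the false conclusion $I(\xi)=I(-\xi)$ with no linear correction. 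The correction $\xi\,\mathbb{E}[\log\rho]$ comes from the reversibility/potential structure of one-dimensional nearest-neighbor chains (encoded in the Wronskian identity), not from reflecting the environment; note also that the minimizer $\mu_\xi$ you start from need not exist for $\xi$ in the flat piece $[\bar{\xi}_c,0)$, so an argument resting on it is incomplete anyway.

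The second half is also not established. You only outline a strategy, and your concrete candidate (an environment charging only the jumps $+2$ and $-1$) violates (A1)--(A2), under which the representation (\ref{nnayni}) that any such comparison must use has been proved. The paper's counterexample is a homogeneous walk with $p(-2)=1/7$, $p(-1)=3/7$, $p(1)=1/7$, $p(2)=2/7$: there $\mathrm{e}^{-\lambda(r)}$ and $\mathrm{e}^{\bar{\lambda}(r)}$ are the two positive roots of the explicit quartic $2x^4+x^3-7\mathrm{e}^{-r}x^2+3x+1$, and evaluating at a few values of $r$ shows $\bar{\lambda}(r)-\lambda(r)$ is not constant in $r$, which rules out any relation of the form $I(\xi)=I(-\xi)+c\,\xi$. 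Your curvature-comparison idea could in principle serve the same purpose, but it is not carried out, so as it stands neither half of the proposition is proved.
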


\section{Large deviation principle for the pair empirical measure}\label{pairLDPsection}

As mentioned in Subsection \ref{egridogru}, Rosenbluth \cite{jeffrey} takes the point of view of a particle performing RWRE and proves the quenched LDP for the mean velocity. In this section, we generalize his argument and prove Theorem \ref{level2LDP}. The strategy is to show the existence of the logarithmic moment generating function $\Lambda: C_b(\Omega\times\mathcal{R})\rightarrow\mathbb{R}$ given by
\begin{eqnarray*}
\Lambda(f)&=&\lim_{n\rightarrow\infty}\frac{1}{n}\log E_o^{\omega}\left[\mathrm{e}^{n\langle f,\nu_{n,X}\rangle}\right]\\
&=&\lim_{n\rightarrow\infty}\frac{1}{n}\log E_o^{\omega}\left[\exp\left(\sum_{k=0}^{n-1} f(T_{X_k}\omega,X_{k+1}-X_k)\right)\right]
\end{eqnarray*} where $C_b$ denotes the space of bounded continuous functions.
\begin{theorem}\label{LMGF}
Suppose there exists an $\alpha>0$ such that (\ref{kimimvarki}) holds for each $z\in\mathcal{R}$. Then, the following are true:
\begin{description}
\item[Lower bound] For $\mathbb{P}$-a.e.\ $\omega$,
\begin{align*}
&\liminf_{n\rightarrow\infty}\frac{1}{n}\log E_o^{\omega}\left[\exp\left(\sum_{k=0}^{n-1} f(T_{X_k}\omega,X_{k+1}-X_k)\right)\right]\\
&\geq\sup_{\mu\in M_{1,s}^{\ll}(\Omega\times\mathcal{R})}\int\sum_{z\in\mathcal{R}}\mathrm{d}\mu(\omega,z)\left(f(\omega,z)-\log \frac{\mathrm{d}\mu(\omega,z)}{\mathrm{d}(\mu)^1(\omega)\pi(0,z)}\right)=:\Gamma(f).
\end{align*}
\item[Upper bound] For $\mathbb{P}$-a.e.\ $\omega$,
\begin{align*}
&\limsup_{n\rightarrow\infty}\frac{1}{n}\log E_o^{\omega}\left[\exp\left(\sum_{k=0}^{n-1} f(T_{X_k}\omega,X_{k+1}-X_k)\right)\right]\\
&\leq\inf_{F\in\mathcal{K}}\mathrm{ess}\sup_{\mathbb{P}}\log\sum_{z\in\mathcal{R}}\pi(0,z)\mathrm{e}^{f(\omega,z)+F(\omega,z)}=:\Lambda(f).
\end{align*}
\item[Equivalence of the bounds] For every $\epsilon>0$, there exists an $F_\epsilon\in\mathcal{K}$ such that
\[\mathrm{ess}\sup_{\mathbb{P}}\log\sum_{z\in\mathcal{R}}\pi(0,z)\mathrm{e}^{f(\omega,z)+F_\epsilon(\omega,z)}\leq\Gamma(f)+\epsilon.\]
\end{description}Thus, $\Lambda(f)\leq\Gamma(f)$. This clearly implies the existence of the logarithmic moment generating function.
\end{theorem}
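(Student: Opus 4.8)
The plan is to establish the three bulleted assertions of Theorem~\ref{LMGF} separately, as each corresponds to a different technique, and then observe that they combine to pin down $\Lambda(f)$.

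\textbf{Lower bound.} For a fixed $\mu\in M_{1,s}^{\ll}(\Omega\times\mathcal{R})$, I would change measure: let $\hat\pi_\mu(\omega,z):=\mathrm{d}\mu(\omega,z)/\mathrm{d}(\mu)^1(\omega)$ be an environment kernel, and compare $P_o^\omega$ with $P_o^{\hat\pi_\mu,\omega}$. The key structural fact is that $(\mu)^1=(\mu)^2$ means $(\mu)^1$ is an invariant measure for the environment Markov chain driven by $\hat\pi_\mu$, absolutely continuous with respect to $\mathbb{P}$; since $\mathbb{P}$ is ergodic, $(\mu)^1$ is ergodic as well, and the ergodic theorem applies along paths under $P_o^{\hat\pi_\mu,\omega}$. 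Writing
\[
E_o^\omega\Bigl[\exp\Bigl(\textstyle\sum_{k=0}^{n-1} f(T_{X_k}\omega, X_{k+1}-X_k)\Bigr)\Bigr]
= E_o^{\hat\pi_\mu,\omega}\Bigl[\exp\Bigl(\textstyle\sum_{k=0}^{n-1}\bigl(f(T_{X_k}\omega,\Delta_k)-\log\tfrac{\hat\pi_\mu(T_{X_k}\omega,\Delta_k)}{\pi(0,\Delta_k)}\bigr)\Bigr)\Bigr]
\]
with $\Delta_k:=X_{k+1}-X_k$, Jensen's inequality pushes the expectation into the exponent, and the ergodic theorem identifies the $\frac1n$-limit of the resulting Cesàro average with $\int\sum_z \mathrm{d}\mu(\omega,z)\bigl(f(\omega,z)-\log\frac{\mathrm{d}\mu(\omega,z)}{\mathrm{d}(\mu)^1(\omega)\pi(0,z)}\bigr)$. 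One has to check this works for $\mathbb{P}$-a.e.\ $\omega$ simultaneously for all $\mu$ in a countable dense family and then pass to the supremum; the moment hypothesis (\ref{kimimvarki}) enters to guarantee the $\log\pi$ terms are integrable against $(\mu)^1\ll\mathbb{P}$.

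\textbf{Upper bound.} Here I would use the standard ``corrector'' / exponential martingale trick. For any $F\in\mathcal{K}$, the closed-loop property lets one write $F(\omega,z)=G(T_z\omega)-G(\omega)$ in a suitable pathwise sense (or at least telescope $\sum_{k=0}^{n-1}F(T_{X_k}\omega,\Delta_k)$ into boundary terms $G(T_{X_n}\omega)-G(\omega)$ that are $o(n)$). Consequently
\[
E_o^\omega\Bigl[\exp\Bigl(\textstyle\sum_{k=0}^{n-1} f(T_{X_k}\omega,\Delta_k)\Bigr)\Bigr]
= E_o^\omega\Bigl[\exp\Bigl(\textstyle\sum_{k=0}^{n-1}\bigl(f+F\bigr)(T_{X_k}\omega,\Delta_k)\Bigr)\Bigr]\cdot\mathrm{e}^{o(n)},
\]
and the right-hand side is bounded by $\prod_{k}\mathrm{ess\,sup}_{\mathbb{P}}\sum_{z}\pi(0,z)\mathrm{e}^{(f+F)(\cdot,z)}$ up to the boundary correction, using the Markov property step by step and taking the essential supremum at each site. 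Taking $\frac1n\log$, the limsup is at most $\mathrm{ess\,sup}_{\mathbb{P}}\log\sum_z\pi(0,z)\mathrm{e}^{f(\omega,z)+F(\omega,z)}$ for every admissible $F$, hence at most the infimum. The delicate point is making the telescoping rigorous when $G$ is only defined up to the cocycle $F$ and may not itself be a bounded measurable function on $\Omega$; this requires either showing $\max_{|x|\le Bn}|G(T_x\omega)|=o(n)$ $\mathbb{P}$-a.s.\ using the $L^{d+\alpha}$ moment bound on $F$ (a Borel--Cantelli argument over the $O(n^d)$ lattice points, which is exactly why the exponent $d+\alpha$ appears), or working directly with the cocycle and a perturbed walk.

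\textbf{Equivalence of the bounds.} This is the heart of the matter and the step I expect to be the main obstacle: one must produce, for each $\epsilon>0$, an $F_\epsilon\in\mathcal{K}$ whose essential-supremum functional is within $\epsilon$ of the variational quantity $\Gamma(f)$. The natural candidate comes from the (near-)optimal $\mu$ in the definition of $\Gamma(f)$: if $\mu_\epsilon$ nearly attains the supremum, one wants $F_\epsilon$ to be (a regularization of) $\log\frac{\mathrm{d}\mu_\epsilon(\omega,z)}{\mathrm{d}(\mu_\epsilon)^1(\omega)\pi(0,z)} - f(\omega,z) + \text{const}$, which by construction of $\mu_\epsilon\in M_{1,s}^{\ll}$ as an invariant measure should satisfy the mean-zero and closed-loop conditions, and for which $\sum_z\pi(0,z)\mathrm{e}^{f+F_\epsilon}$ is essentially constant in $\omega$, equal to a normalization that the $h$-transform / invariant-measure identity ties back to $\Gamma(f)$. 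The real work is: (a) checking $F_\epsilon$ genuinely lies in $\mathcal{K}$ — the closed-loop property needs the gradient-like structure coming from $(\mu_\epsilon)^1=(\mu_\epsilon)^2$, and the moment condition needs $\mu_\epsilon$ chosen with enough integrability, which one arranges by first establishing $\Gamma(f)$ is approached by $\mu$ with bounded densities; (b) controlling the passage from ``$\int(\cdots)\,\mathrm{d}\mu_\epsilon$ is large'' to ``$\mathrm{ess\,sup}_\mathbb{P}\log\sum_z\pi\,\mathrm{e}^{f+F_\epsilon}$ is small'' — i.e.\ converting an average bound into a sup bound — which typically forces a further approximation step (truncation of densities, mollification, or an iteration to make the integrand nearly $\omega$-independent). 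Once these three bullets are in place, $\Gamma(f)\le\liminf\le\limsup\le\Lambda(f)\le\Gamma(f)$ forces all inequalities to be equalities, so the limit defining $\Lambda(f)$ exists and equals $\Gamma(f)$.
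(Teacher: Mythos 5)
Your lower bound is essentially the paper's argument (change of measure to $P_o^{\hat\pi_\mu,\omega}$, Jensen, Kozlov's ergodic theorem for the environment chain under the invariant density), and your upper bound has the right outline (add an $F\in\mathcal{K}$ to the exponent, control the cost as $o(n)$, then iterate the one-step conditional expectation). But the mechanism you propose for the sublinearity of the cocycle does not work: a union bound over the $O(n^d)$ lattice sites in the range, with the $L^{d+\alpha}$ tail of each $F(\cdot,z)$, produces $\sum_n n^d\,\mathbb{P}(|F|>n\epsilon)\lesssim\sum_n n^{-\alpha}$, which diverges for the admissible range $\alpha\in(0,1]$; worse, the quantity to control is the cocycle $\sum_k F(T_{x_k}\omega,\Delta_k)$ accumulated over $\sim n$ steps, whose tail a crude union bound cannot touch. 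The paper gets Lemma~\ref{GRR} from the Garsia--Rodemich--Rumsey continuity estimate (interpolating the cocycle to a continuous field and estimating its modulus of continuity from the $L^{d+\alpha}$ increments), which is a genuinely different and more delicate argument than the Borel--Cantelli heuristic you sketch.

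The real gap is in the equivalence of the bounds. You propose taking a near-optimal $\mu_\epsilon$ and setting $F_\epsilon(\omega,z)=\log\frac{\mathrm{d}\mu_\epsilon(\omega,z)}{\mathrm{d}(\mu_\epsilon)^1(\omega)\pi(0,z)}-f(\omega,z)+\mathrm{const}$. This does make $\sum_z\pi(0,z)\mathrm{e}^{f+F_\epsilon}$ constant in $\omega$, so your issue (b) is automatic, but issue (a) is fatal: the closed-loop condition fails. The invariance $(\mu_\epsilon)^1=(\mu_\epsilon)^2$ is a one-step averaging identity and gives no reason for $\log\hat\pi_{\mu_\epsilon}-\log\pi-f$ to telescope around cycles; in general it is not a coboundary (nor a constant shift of one), so $F_\epsilon\notin\mathcal{K}$. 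The paper avoids this by never building $F$ out of a near-optimal $\mu$. Instead it rewrites $\Gamma(f)$ as the triple expression (\ref{putinh}) $\sup_\phi\sup_{\hat\pi}\inf_h$, in which the bounded function $h$ plays the role of a Lagrange multiplier enforcing invariance; then, restricting to a filtration of finite $\sigma$-algebras $(\mathcal{E}_k)$, it applies Ky Fan's minimax theorem to interchange $\sup_{\hat\pi}$ with $\inf_h$, optimizes out $\hat\pi$ explicitly, and extracts nearly-optimal $h_{k,\epsilon}$. The approximants $F_{k,\epsilon}(\omega,z):=\mathbb{E}[h_{k,\epsilon}(\omega)-h_{k,\epsilon}(T_z\omega)\,|\,\mathcal{E}_{k-1}]$ are conditional coboundaries, hence automatically of gradient type; the uniform $L^{d+\alpha}$ bound coming from (\ref{kimimvarki}) gives a weak limit $F_\epsilon$, the closed-loop condition passes to the limit, and Mazur's theorem plus Jensen upgrades the weak convergence to the a.s.\ bound $\mathrm{ess\,sup}\log\sum_z\pi\,\mathrm{e}^{f+F_\epsilon}\leq\Gamma(f)+\epsilon$. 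The minimax reformulation and the construction of $F_\epsilon$ as a limit of conditional coboundaries are precisely the ideas missing from your proposal.
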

Subsection \ref{LMGFsubsection} is devoted to the proof of Theorem \ref{LMGF}. After that, proving Theorem \ref{level2LDP} is easy: the LDP lower bound is obtained by a change of measure argument, and the LDP upper bound is a standard result since $M_1(\Omega\times\mathcal{R})$ is compact. Details are given in Subsection \ref{LDPproof}.

In our proofs, we will make frequent use of
\begin{lemma}[Kozlov \cite{Kozlov}]\label{Kozlov}
If an environment kernel $\hat{\pi}$ satisfies $\hat{\pi}(\cdot,z)>0$ $\mathbb{P}$-a.s.\ for each $z\in U$, and if there exists a $\hat{\pi}$-invariant probability measure $\mathbb{Q}\ll\mathbb{P}$, then the following hold:
\begin{itemize}
\item[(a)] The measures $\mathbb{P}$ and $\mathbb{Q}$ are in fact mutually absolutely continuous.
\item[(b)] The environment Markov chain with transition kernel $\hat{\pi}$ and initial distribution $\mathbb{Q}$ is stationary and ergodic.
\item[(c)] $\mathbb{Q}$ is the unique $\hat{\pi}$-invariant probability measure on $\Omega$ that is absolutely continuous relative to $\mathbb{P}$.
\item[(d)] The following LLN is satisfied: \[P_o^{\hat{\pi}}\left(\lim_{n\rightarrow\infty}\frac{X_n}{n}=\int\sum_{z\in\mathcal{R}}\hat{\pi}(\omega,z)z\;\mathrm{d}\mathbb{Q}\right) = 1.\]
\end{itemize}
\end{lemma}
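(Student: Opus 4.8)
\textbf{Proof proposal for Lemma \ref{Kozlov} (Kozlov).}

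The plan is to exploit the measurability/ellipticity hypothesis together with the existence of the invariant measure $\mathbb{Q}\ll\mathbb{P}$ to first promote absolute continuity to mutual absolute continuity, and then to bootstrap ergodicity and the LLN from standard Markov chain ergodic theory once stationarity is in hand. Let $\varphi:=\mathrm{d}\mathbb{Q}/\mathrm{d}\mathbb{P}$, which is nonnegative and in $L^1(\mathbb{P})$. The key structural fact is that the transition kernel $\overline{\pi}$ built from an environment kernel is ``translation-covariant'': the shift $T_z$ acts on $\Omega$ and the walk increments by elements of $\mathcal{R}\supseteq U$. For part (a), I would first observe that $\mathbb{Q}$-invariance reads, for every bounded measurable $g$,
\[
\int g(\omega)\,\varphi(\omega)\,\mathrm{d}\mathbb{P}(\omega)=\int\sum_{z\in\mathcal{R}}\hat{\pi}(\omega,z)\,g(T_z\omega)\,\varphi(\omega)\,\mathrm{d}\mathbb{P}(\omega).
\]
Taking $g=\one_{A}$ for $A$ with $\mathbb{P}(A)=0$ shows $\mathbb{Q}(A)=0$, i.e. $\mathbb{Q}\ll\mathbb{P}$ (the given hypothesis); the content is the reverse. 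To get $\mathbb{P}\ll\mathbb{Q}$, let $D:=\{\varphi=0\}$ and I claim $\mathbb{P}(D)=0$. Rewriting invariance in ``backward'' form, for any nonnegative measurable $h$,
\[
\int h(\omega)\,\varphi(\omega)\,\mathrm{d}\mathbb{P}(\omega)=\sum_{z\in\mathcal{R}}\int \hat{\pi}(T_{-z}\omega,z)\,\varphi(T_{-z}\omega)\,h(\omega)\,\mathrm{d}\mathbb{P}(T_{-z}\omega),
\]
and since $\mathbb{P}$ is $T_z$-invariant this identifies $\varphi(\omega)=\sum_{z\in\mathcal{R}}\hat{\pi}(T_{-z}\omega,z)\varphi(T_{-z}\omega)$ for $\mathbb{P}$-a.e.\ $\omega$. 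On $D$ the left side vanishes while every summand on the right is nonnegative, so each must vanish $\mathbb{P}$-a.s.\ on $D$; because $\hat{\pi}(\cdot,z)>0$ a.s.\ for $z\in U$, this forces $\varphi(T_{-z}\omega)=0$ for $\mathbb{P}$-a.e.\ $\omega\in D$ and each $z\in U$, i.e.\ $T_{-z}D\subseteq D$ up to null sets for all $z\in U$. Since $U$ generates $\mathbb{Z}^d$, iterating gives that $D$ is (mod $\mathbb{P}$) invariant under the whole group $(T_z)_{z\in\mathbb{Z}^d}$; ergodicity of $\mathbb{P}$ then yields $\mathbb{P}(D)\in\{0,1\}$, and $D$ of full measure is impossible because $\int\varphi\,\mathrm{d}\mathbb{P}=1$. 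Hence $\mathbb{P}(D)=0$, $\varphi>0$ a.s., and $\mathbb{P}\ll\mathbb{Q}$. I expect this step --- propagating the zero set of the density along the allowed jumps and then invoking ergodicity --- to be the main obstacle, the one place where both the $U$-ellipticity and the ergodicity of $\mathbb{P}$ are genuinely needed.

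For part (b), with $\mathbb{P}$ and $\mathbb{Q}$ now mutually absolutely continuous and $\mathbb{Q}$ being $\hat{\pi}$-invariant, the environment chain started from $\mathbb{Q}$ is by construction a stationary process; I only need ergodicity of this stationary chain. I would argue that any event in the tail/invariant $\sigma$-field of the path space $\Omega^{\mathbb{N}_0}$ that is $\overline{\pi}$-invariant corresponds (via the usual correspondence for stationary Markov chains) to a set $C\subseteq\Omega$ with $\overline{\pi}\one_C=\one_C$ $\mathbb{Q}$-a.s., i.e.\ $\sum_{z\in\mathcal{R}}\hat{\pi}(\omega,z)\one_C(T_z\omega)=\one_C(\omega)$. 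The same positivity-and-convexity argument as above (each summand nonnegative, sum equal to an indicator) forces $\one_C(T_z\omega)=\one_C(\omega)$ for all $z\in U$, $\mathbb{Q}$-a.s., hence $\mathbb{P}$-a.s.\ by equivalence, so $C$ is invariant under $(T_z)_{z\in\mathbb{Z}^d}$ and therefore $\mathbb{P}(C)\in\{0,1\}$, whence $\mathbb{Q}(C)\in\{0,1\}$. This is precisely ergodicity of the stationary environment chain. Part (c) is then immediate: if $\mathbb{Q}'\ll\mathbb{P}$ is another $\hat{\pi}$-invariant probability, part (a) applies to $\mathbb{Q}'$ as well, so $\mathbb{Q}'\sim\mathbb{P}\sim\mathbb{Q}$; but two mutually absolutely continuous invariant measures for an ergodic (w.r.t.\ either) stationary chain must coincide --- one can see this by noting $\mathrm{d}\mathbb{Q}'/\mathrm{d}\mathbb{Q}$ is a bounded-below-in-$L^1$, $\overline{\pi}$-invariant function, hence $\mathbb{Q}$-a.s.\ constant by the ergodicity just established, and the constant is $1$.

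For part (d), I would apply Birkhoff's ergodic theorem to the stationary ergodic environment chain under $P_o^{\hat{\pi}}$ started from $\mathbb{Q}$: the displacements $Y_{k}:=X_{k+1}-X_k$ form a bounded functional of the chain, namely $X_{n}/n=\frac1n\sum_{k=0}^{n-1}Y_k$, and the drift function $\omega\mapsto\sum_{z\in\mathcal{R}}\hat{\pi}(\omega,z)z=E^{\hat{\pi},\omega}_o[Y_0]$ is bounded (by $B$) hence integrable against $\mathbb{Q}$. By the Markov-chain ergodic theorem the time averages converge $P_o^{\hat{\pi}}$-a.s.\ to $\int\sum_{z}\hat{\pi}(\omega,z)z\,\mathrm{d}\mathbb{Q}$. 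Finally, since the statement of (d) is about $P_o^{\hat{\pi}}=\mathbb{P}\times P_o^{\hat{\pi},\omega}$ rather than $\mathbb{Q}\times P_o^{\hat{\pi},\omega}$, I would transfer the almost-sure statement from $\mathbb{Q}$ to $\mathbb{P}$ using $\mathbb{P}\ll\mathbb{Q}$ (indeed $\mathbb{P}\sim\mathbb{Q}$) from part (a): a $P_o^{\hat{\pi}}$-event that is trivial when $\omega$ is drawn from $\mathbb{Q}$ is also trivial when $\omega$ is drawn from $\mathbb{P}$. This completes the proof.
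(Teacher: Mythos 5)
The paper offers no proof of this lemma: it is imported as a known result from Kozlov \cite{Kozlov}, so there is no in-paper argument to compare against and your proposal has to be judged on its own. Your parts (a) and (b) are the classical Kozlov argument and are correct: rewriting invariance of $\mathbb{Q}=\varphi\,\mathrm{d}\mathbb{P}$ in the backward form $\varphi(\omega)=\sum_{z}\hat{\pi}(T_{-z}\omega,z)\varphi(T_{-z}\omega)$ (the same form the paper uses in Section 2), propagating the zero set of $\varphi$, respectively the level sets of a harmonic indicator, along the directions in $U$, noting that $U$ generates $\mathbb{Z}^d$, and invoking ergodicity of $\mathbb{P}$ under the full shift group is exactly the right mechanism, and the transfer of the a.s.\ statement from $\mathbb{Q}$ to $\mathbb{P}$ at the end of (d) via $\mathbb{P}\sim\mathbb{Q}$ is correct and necessary.

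Two steps, however, are imprecise as written, though both are repairable by standard arguments. In (c), the density $h=\mathrm{d}\mathbb{Q}'/\mathrm{d}\mathbb{Q}$ of one invariant measure with respect to another is not $\overline{\pi}$-harmonic in general: invariance of $\mathbb{Q}'$ only yields $\overline{\pi}^{\,*}h=h$ for the adjoint (time-reversed) kernel in $L^2(\mathbb{Q})$, so you cannot conclude constancy "by the ergodicity just established" as stated. The easiest fix is to apply (a)--(b) to $\mathbb{Q}'$ as well, so that $\mathbb{Q}$ and $\mathbb{Q}'$ are two mutually absolutely continuous ergodic invariant measures of the same kernel; distinct ergodic invariant measures are mutually singular (alternatively, run Birkhoff under both stationary laws and use absolute continuity of the path measures to identify $\int f\,\mathrm{d}\mathbb{Q}'=\int f\,\mathrm{d}\mathbb{Q}$ for all bounded $f$). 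In (d), the increments $X_{k+1}-X_k$ are \emph{not} a functional of the environment chain $(T_{X_k}\omega)_{k\geq0}$, because $z\mapsto T_z\omega$ need not be injective (e.g.\ periodic environments, which the stationary ergodic setting allows), so Birkhoff applied to that chain alone does not control $X_n/n$. The standard repair is either the martingale decomposition $X_n=M_n+\sum_{k=0}^{n-1}m(T_{X_k}\omega)$ with local drift $m(\omega)=\sum_{z\in\mathcal{R}}\hat{\pi}(\omega,z)z$, where $M_n$ has bounded increments so $M_n/n\to0$ a.s.\ and the ergodic theorem handles the drift term, or the ergodic theorem for the pair chain on $\Omega\times\mathcal{R}$ with invariant law $\hat{\pi}(\omega,z)\,\mathrm{d}\mathbb{Q}(\omega)$. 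With these two patches the proposal is a complete and essentially standard proof of the lemma.
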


\subsection{Logarithmic moment generating function}\label{LMGFsubsection}

\subsubsection{Lower bound}\label{pourum}

This is a standard change of measure argument. For any environment kernel $\hat{\pi}$ as in Definition \ref{ortamkeli},
\begin{align*}
&E_o^{\omega}\left[\exp\left(\sum_{k=0}^{n-1} f(T_{X_k}\omega,X_{k+1}-X_k)\right)\right]\\
&=E_o^{\hat{\pi},\omega}\left[\exp\left(\sum_{k=0}^{n-1} f(T_{X_k}\omega,X_{k+1}-X_k)\right)\,\frac{\mathrm{d}P_o^\omega}{\mathrm{d}P_o^{\hat{\pi},\omega}}\right]\\
&=E_o^{\hat{\pi},\omega}\left[\exp\left(\sum_{k=0}^{n-1}f(T_{X_k}\omega,X_{k+1}-X_k)-\log\frac{\hat{\pi}(T_{X_k}\omega,X_{k+1}-X_k)}{\pi(X_k,X_{k+1})}\right)\right].
\end{align*} If $\hat{\pi}(\cdot,z)>0$ $\mathbb{P}$-a.s.\ for each $z\in U$, and if there exists a $\phi\in L^1(\mathbb{P})$ such that $\phi\,\mathrm{d}\mathbb{P}$ is an invariant probability measure for the kernel $\hat{\pi}$, i.e., if $$\phi(\omega)=\sum_{z\in\mathcal{R}}\phi(T_{-z}\omega)\hat{\pi}(T_{-z}\omega,z)$$ for $\mathbb{P}$-a.e.\ $\omega$, then it follows from Lemma \ref{Kozlov} that $\phi\,\mathrm{d}\mathbb{P}$ is in fact an ergodic invariant measure for $\hat{\pi}$. By Jensen's inequality,
\begin{align}
&\liminf_{n\rightarrow\infty}\frac{1}{n}\log E_o^{\omega}\left[\exp\left(\sum_{k=0}^{n-1} f(T_{X_k}\omega,X_{k+1}-X_k)\right)\right]\nonumber\\
&\geq\liminf_{n\rightarrow\infty}E_o^{\hat{\pi},\omega}\left[\frac{1}{n}\sum_{k=0}^{n-1}f(T_{X_k}\omega,X_{k+1}-X_k)-\log\frac{\hat{\pi}(T_{X_k}\omega,X_{k+1}-X_k)}{\pi(X_k,X_{k+1})}\right]\nonumber\\
&=\int\sum_{z\in\mathcal{R}}\hat{\pi}(\omega,z)\left(f(\omega,z)-\log \frac{\hat{\pi}(\omega,z)}{\pi(0,z)}\right)\phi(\omega)\mathrm{d}\mathbb{P}=:H_f(\hat{\pi},\phi).\label{hakkariye}
\end{align} Therefore,
\begin{align}
&\liminf_{n\rightarrow\infty}\frac{1}{n}\log E_o^{\omega}\left[\exp\left(\sum_{k=0}^{n-1} f(T_{X_k}\omega,X_{k+1}-X_k)\right)\right]\nonumber\\
&\geq\sup_{(\hat{\pi},\phi)}\int\sum_{z\in\mathcal{R}}\hat{\pi}(\omega,z)\left(f(\omega,z)-\log \frac{\hat{\pi}(\omega,z)}{\pi(0,z)}\right)\phi(\omega)\mathrm{d}\mathbb{P}\label{stef}
\end{align} where the supremum is taken over the set of all $(\hat{\pi},\phi)$ pairs where $\hat{\pi}(\cdot,z)>0$ $\mathbb{P}$-a.s.\ for each $z\in U$ and $\phi\,\mathrm{d}\mathbb{P}$ is a $\hat{\pi}$-invariant probability measure. Note that there is a one-to-one correspondence between this set and $M_{1,s}^{\ll}(\Omega\times\mathcal{R})$. Hence, (\ref{stef}) is the desired lower bound.

Before proceeding with the upper bound, let us put (\ref{stef}) in a form that will turn out to be more convenient for showing the equivalence of the bounds. We start by giving a lemma.
\begin{lemma}\label{camilla}
For every $f\in C_b(\Omega\times\mathcal{R})$, $H_f$ (defined in (\ref{hakkariye})) has the following concavity property: For each $t\in(0,1)$ and any two pairs $(\hat{\pi}_1,\phi_1)$ and $(\hat{\pi}_2,\phi_2)$ where $\phi_i\,\mathrm{d}\mathbb{P}$ is $\hat{\pi}_i$-invariant (for $i=1,2$), define \[\gamma=\frac{t\phi_1}{t\phi_1+(1-t)\phi_2},\ \ \phi_3=t\phi_1+(1-t)\phi_2\ \ \mbox{and}\ \ \hat{\pi}_3=\gamma\hat{\pi}_1+(1-\gamma)\hat{\pi}_2.\] Then, $\phi_3\,\mathrm{d}\mathbb{P}$ is $\hat{\pi}_3$-invariant and 
\begin{equation}
H_f(\hat{\pi}_3,\phi_3)\geq tH_f(\hat{\pi}_1,\phi_1)+(1-t)H_f(\hat{\pi}_2,\phi_2).\label{yozgat}
\end{equation}
\end{lemma}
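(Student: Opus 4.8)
The plan is to verify the two assertions of the lemma directly: first that $\phi_3\,\mathrm{d}\mathbb{P}$ is $\hat{\pi}_3$-invariant, and then the concavity inequality \eqref{yozgat}. For the invariance, I would simply plug the definitions into the fixed-point equation $\phi_3(\omega)=\sum_{z\in\mathcal{R}}\phi_3(T_{-z}\omega)\hat{\pi}_3(T_{-z}\omega,z)$. The key algebraic observation is that $\gamma\hat{\pi}_1+(1-\gamma)\hat{\pi}_2$ was chosen precisely so that $\phi_3(\omega)\hat{\pi}_3(\omega,z)=t\phi_1(\omega)\hat{\pi}_1(\omega,z)+(1-t)\phi_2(\omega)\hat{\pi}_2(\omega,z)$, since $\gamma(\omega)\phi_3(\omega)=t\phi_1(\omega)$ and $(1-\gamma(\omega))\phi_3(\omega)=(1-t)\phi_2(\omega)$. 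So the "flux" $\phi_3\hat{\pi}_3$ is the convex combination of the fluxes $\phi_i\hat{\pi}_i$, and summing the corresponding invariance identities for $i=1,2$ (multiplied by $t$ and $1-t$) gives the invariance of $\phi_3\,\mathrm{d}\mathbb{P}$ for $\hat{\pi}_3$. One must note that where $\phi_3=0$ the kernel $\hat{\pi}_3$ is defined arbitrarily (as a probability vector) but this set is $\mathbb{P}$-null on the relevant side because $\phi_3\geq t\phi_1$ and, after applying Lemma \ref{Kozlov}(a), $\phi_1\,\mathrm{d}\mathbb{P}\sim\mathbb{P}$, so $\phi_3>0$ $\mathbb{P}$-a.s.; hence $\gamma$ is well-defined $\mathbb{P}$-a.s.\ and the convex-combination identities hold almost everywhere.

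For the concavity inequality, rewrite $H_f(\hat{\pi},\phi)=\int\sum_{z}\psi(\omega,z)\left(f(\omega,z)-\log\frac{\psi(\omega,z)}{\phi(\omega)\pi(0,z)}\right)\mathrm{d}\mathbb{P}$ where $\psi:=\phi\hat{\pi}$ is the flux. The $f$-term is linear in $\psi$, hence exactly additive under the convex combination $\psi_3=t\psi_1+(1-t)\psi_2$ (with $\phi_3=t\phi_1+(1-t)\phi_2$), so it contributes equality. For the entropy-type term, the claim reduces to the joint convexity of the map $(\psi,\phi)\mapsto\psi\log\frac{\psi}{\phi\pi(0,z)}$ on $\mathbb{R}^+\times\mathbb{R}^+$ for each fixed $z$ (equivalently, that $-\psi\log\frac{\psi}{\phi c}$ is jointly concave): this is the standard fact that the relative entropy / Kullback–Leibler functional is jointly convex in its pair of arguments, which follows from the log-sum inequality. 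Pointwise in $(\omega,z)$ this gives $\psi_3\log\frac{\psi_3}{\phi_3\pi(0,z)}\leq t\,\psi_1\log\frac{\psi_1}{\phi_1\pi(0,z)}+(1-t)\,\psi_2\log\frac{\psi_2}{\phi_2\pi(0,z)}$, and integrating against $\mathrm{d}\mathbb{P}$ and combining with the equality from the $f$-term yields \eqref{yozgat}. The factor $\pi(0,z)$ is just a fixed positive constant (in the pointwise inequality) and causes no difficulty, and the moment hypothesis \eqref{kimimvarki} together with $f\in C_b$ guarantees all integrals are well-defined so the inequality passes to the integral.

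The main obstacle is bookkeeping around the set where denominators vanish: one needs $\phi_i>0$ $\mathbb{P}$-a.s.\ to make $\gamma$ and the kernels $\hat{\pi}_3$ genuinely well-defined and to apply the log-sum inequality pointwise, and for this one invokes Lemma \ref{Kozlov}(a), which requires the hypothesis $\hat{\pi}_i(\cdot,z)>0$ for $z\in U$ — this is exactly the condition built into $M_{1,s}^{\ll}(\Omega\times\mathcal{R})$, so it is available. A secondary point is checking that $\hat{\pi}_3(\cdot,z)>0$ for $z\in U$ as well (immediate, since $\hat{\pi}_3\geq\gamma\hat{\pi}_1$ and $\hat{\pi}_1(\cdot,z)>0$, with $\gamma>0$ $\mathbb{P}$-a.s.), so that $(\hat{\pi}_3,\phi_3)$ lies in the admissible class over which the supremum in \eqref{stef} is taken. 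Everything else is the two standard convexity facts — linearity of the $f$-term and joint convexity of relative entropy — applied pointwise and then integrated.
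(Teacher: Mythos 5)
Your argument is correct and is, at its core, the same as the paper's: the invariance of $\phi_3\,\mathrm{d}\mathbb{P}$ follows from the flux identity $\phi_3\hat{\pi}_3=t\phi_1\hat{\pi}_1+(1-t)\phi_2\hat{\pi}_2$, and the inequality \eqref{yozgat} is a pointwise convexity estimate integrated against $\mathbb{P}$. The cosmetic difference is that the paper keeps $\phi_3$ fixed in the integrand, views $\hat{\pi}_3=\gamma\hat{\pi}_1+(1-\gamma)\hat{\pi}_2$ as an $\omega$-dependent mixture, and applies one-variable Jensen (concavity of $p\mapsto p(f-\log(p/\pi))$) with weights $\gamma,1-\gamma$, then converts $\gamma\phi_3=t\phi_1$ afterwards; you instead pass to the flux variables $(\psi,\phi)$ with fixed weights $t,1-t$ and invoke joint convexity of the perspective function $(\psi,\phi)\mapsto\psi\log(\psi/\phi)$. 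These are the same inequality in different coordinates, so there is nothing substantive to choose between them.

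One side remark in your write-up should be corrected, though it does not affect the main argument. You invoke Lemma~\ref{Kozlov}(a) to get $\phi_i>0$ $\mathbb{P}$-a.s.\ and you close by checking that $(\hat{\pi}_3,\phi_3)$ lands back in the admissible class $M_{1,s}^{\ll}$. But Lemma~\ref{camilla} is stated for \emph{arbitrary} pairs $(\hat{\pi}_i,\phi_i)$ with $\phi_i\,\mathrm{d}\mathbb{P}$ $\hat{\pi}_i$-invariant, without any positivity hypothesis on $\hat{\pi}_i$, and the paper deliberately applies it (just after the lemma) to a pair $(\hat{\pi}_2,\phi_2)$ that is \emph{not} assumed to satisfy $\hat{\pi}_2(\cdot,z)>0$ on $U$ — that is precisely how the positivity constraint gets dropped from the supremum \eqref{stef}. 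Kozlov's lemma is therefore not available for $(\hat{\pi}_2,\phi_2)$, and it is also not needed: on $\{\phi_3=0\}$ both $\phi_1$ and $\phi_2$ vanish, so $\psi_3=0$ and the integrand is zero with the usual $0\log 0=0$ convention; on $\{\phi_3>0\}$, $\gamma$ is well defined and your pointwise convexity inequality applies. The proof goes through without any appeal to Kozlov at this stage.
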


\begin{proof}
For any $t\in(0,1)$, it follows from the definitions and the assumptions in the statement of the lemma that $\mathbb{P}$-a.s.
\begin{align*}
\sum_{z\in\mathcal{R}}\phi_3(T_{-z}\omega)\hat{\pi}_3(T_{-z}\omega,z)&=\sum_{z\in\mathcal{R}}\phi_3(T_{-z}\omega)\gamma(T_{-z}\omega)\hat{\pi}_1(T_{-z}\omega,z)+\sum_{z\in\mathcal{R}}\phi_3(T_{-z}\omega)(1-\gamma(T_{-z}\omega))\hat{\pi}_2(T_{-z}\omega,z)\\
&=\ t\sum_{z\in\mathcal{R}}\phi_1(T_{-z}\omega)\hat{\pi}_1(T_{-z}\omega,z)+(1-t)\sum_{z\in\mathcal{R}}\phi_2(T_{-z}\omega)\hat{\pi}_2(T_{-z}\omega,z)\\
&=\ t\phi_1(\omega)+(1-t)\phi_2(\omega)=\phi_3(\omega).
\end{align*} In words, $\phi_3\,\mathrm{d}\mathbb{P}$ is $\hat{\pi}_3$-invariant. Finally,
\begin{align*}
H_f(\hat{\pi}_3,\phi_3)&=\int\sum_{z\in\mathcal{R}}\hat{\pi}_3(\omega,z)\left(f(\omega,z)-\log \frac{\hat{\pi}_3(\omega,z)}{\pi(0,z)}\right)\phi_3(\omega)\mathrm{d}\mathbb{P}\\&\geq\int\gamma(\omega)\sum_{z\in\mathcal{R}}\hat{\pi}_1(\omega,z)\left(f(\omega,z)-\log \frac{\hat{\pi}_1(\omega,z)}{\pi(0,z)}\right)\phi_3(\omega)\mathrm{d}\mathbb{P}\\&\ \ \ +\int(1-\gamma(\omega))\sum_{z\in\mathcal{R}}\hat{\pi}_2(\omega,z)\left(f(\omega,z)-\log \frac{\hat{\pi}_2(\omega,z)}{\pi(0,z)}\right)\phi_3(\omega)\mathrm{d}\mathbb{P}\\&=\ t\int\sum_{z\in\mathcal{R}}\hat{\pi}_1(\omega,z)\left(f(\omega,z)-\log \frac{\hat{\pi}_1(\omega,z)}{\pi(0,z)}\right)\phi_1(\omega)\mathrm{d}\mathbb{P}\\&\ \ \ +(1-t)\int\sum_{z\in\mathcal{R}}\hat{\pi}_2(\omega,z)\left(f(\omega,z)-\log \frac{\hat{\pi}_2(\omega,z)}{\pi(0,z)}\right)\phi_2(\omega)\mathrm{d}\mathbb{P}\\&=\ tH_f(\hat{\pi}_1,\phi_1)+(1-t)H_f(\hat{\pi}_2,\phi_2)
\end{align*} where the second line is obtained by applying Jensen's inequality to the integrand.
\end{proof}
Going back to the argument, let $\hat{\pi}_1(\omega,z):={1}/{(2d)}$ for each $z\in U$ and $\phi_1(\omega):=1$ for $\mathbb{P}$-a.e.\ $\omega$. An easy computation gives $H_f(\hat{\pi}_1,\phi_1)>-\infty$.  Take any pair $(\hat{\pi}_2,\phi_2)$ such that $\phi_2\,\mathrm{d}\mathbb{P}$ is $\hat{\pi}_2$-invariant and $H_f(\hat{\pi}_2,\phi_2)>-\infty$. For any $t\in(0,1)$, define $(\hat{\pi}_3,\phi_3)$ as in Lemma \ref{camilla} and see that $\hat{\pi}_3(\omega,z)>0$ $\mathbb{P}$-a.s.\ for each $z\in U$. Recalling (\ref{yozgat}), note that $H_f(\hat{\pi}_3,\phi_3)\geq (1-t)H_f(\hat{\pi}_2,\phi_2) + O(t)$. Since one can take $t$ arbitrarily small, the value of (\ref{stef}) does not change if the supremum there is taken over the set of all $(\hat{\pi},\phi)$ pairs where $\phi\,\mathrm{d}\mathbb{P}$ is a $\hat{\pi}$-invariant probability measure, dropping the positivity condition on $\hat{\pi}$. Finally, decouple $\hat{\pi}$ and $\phi$, and express the lower bound $\Gamma(f)$ as
\begin{equation}
\sup_{\phi}\sup_{\hat{\pi}}\inf_{h}\int\sum_{z\in\mathcal{R}}\hat{\pi}(\omega,z)\left(f(\omega,z)-\log \frac{\hat{\pi}(\omega,z)}{\pi(0,z)}+h(\omega)-h(T_z\omega)\right)\phi\,\mathrm{d}\mathbb{P}\label{putinh}
\end{equation} where the suprema are over all probability densities and all environment kernels, and the infimum is over all bounded measurable functions. This is due to the observation that if $\phi\,\mathrm{d}\mathbb{P}$ is not $\hat{\pi}$-invariant, then there exists a bounded measurable function $h:\Omega\to\mathbb{R}$ that satisfies \[\int\sum_{z\in\mathcal{R}}\hat{\pi}(\omega,z)\left(h(\omega)-h(T_z\omega)\right)\phi(\omega)\mathrm{d}\mathbb{P}\neq0,\] and taking scalar multiples of $h$ shows that the infimum in (\ref{putinh}) is $-\infty$.

\subsubsection{Upper bound}

Fix $f\in C_b(\Omega\times\mathcal{R})$. For any $F\in\mathcal{K}$, define \[K(F):=\mathrm{ess}\sup_{\mathbb{P}}\log\sum_{z\in\mathcal{R}}\pi(0,z)\mathrm{e}^{f(\omega,z)+F(\omega,z)}.\] Then, $\mathbb{P}$-a.s.
\begin{align}
&E_o^{\omega}\left[\left.\mathrm{e}^{f(T_{X_{n-1}}\omega, X_n-X_{n-1})+F(T_{X_{n-1}}\omega, X_n-X_{n-1})}\right|X_{n-1}\right]\label{itelebabam}\\
&=\sum_{z\in\mathcal{R}}\pi(X_{n-1},X_{n-1}+z)\mathrm{e}^{f(T_{X_{n-1}}\omega,z)+F(T_{X_{n-1}}\omega,z)}\nonumber\\
&\leq\mathrm{e}^{K(F)}.\nonumber
\end{align}
Taking conditional expectations and iterating (\ref{itelebabam}), one sees that $\mathbb{P}$-a.s. \[E_o^{\omega}\left[\exp\left(\sum_{k=0}^{n-1}f(T_{X_k}\omega,X_{k+1}-X_k)+F(T_{X_k}\omega,X_{k+1}-X_k)\right)\right]\leq\mathrm{e}^{nK(F)}.\] At this point, for any $\epsilon>0$, use Lemma \ref{GRR} (stated below) to write \[E_o^{\omega}\left[\exp\left(-c_\epsilon-n\epsilon+\sum_{k=0}^{n-1}f(T_{X_k}\omega,X_{k+1}-X_k)\right)\right]\leq\mathrm{e}^{nK(F)}\] where $c_\epsilon = c_\epsilon(\omega)$ is some constant. Arrange the terms to obtain \[\frac{1}{n}\log E_o^{\omega}\left[\exp\left(\sum_{k=0}^{n-1} f(T_{X_k}\omega,X_{k+1}-X_k)\right)\right]\leq K(F)+\epsilon+\frac{c_\epsilon}{n}.\] Let $n\to\infty,\ \epsilon\to 0$, and take infimum over $F\in\mathcal{K}$. This is the desired upper bound.

\begin{lemma}\label{GRR}
For every $F\in\mathcal{K}$, $\epsilon>0$ and $\mathbb{P}$-a.e.\ $\omega$, there exists $c_\epsilon= c_\epsilon(\omega)\geq 0$ such that for any sequence $(x_{k})_{k=0}^n$ with $x_o=0$ and $x_{k+1}-x_k\in\mathcal{R}$, \[\left|\sum_{k=0}^{n-1}F(T_{x_k}\omega,x_{k+1}-x_k)\right| \leq c_\epsilon+n\epsilon\] for all $n\geq1$.
\end{lemma}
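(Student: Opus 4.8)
The plan is to deduce Lemma \ref{GRR} from the three defining properties of class $\mathcal{K}$ together with a sublinearity (Garsia--Rodemich--Rumsey type) argument — which is presumably why the lemma is labelled \texttt{GRR}. First I would use the \textbf{closed loop} property to reduce a path sum $\sum_{k=0}^{n-1}F(T_{x_k}\omega,x_{k+1}-x_k)$ over an arbitrary admissible path from $0$ to $x_n$ to a quantity depending only on the endpoint: define $g_\omega(x):=\sum_{k=0}^{m-1}F(T_{y_k}\omega,y_{k+1}-y_k)$ for any admissible path $(y_k)_{k=0}^m$ from $0$ to $x$; the closed loop condition makes this well defined (two paths with the same endpoints differ by a closed loop, possibly after concatenation with a reversal, and admissibility of reversed steps is guaranteed because $\mathcal{R}=-\mathcal{R}$). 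Then the bound to prove becomes $|g_\omega(x)|\le c_\epsilon+\epsilon\,\ell(x)$ where $\ell(x)$ is the length of the shortest admissible path from $0$ to $x$; since a path of $n$ steps reaches a point $x_n$ with $\ell(x_n)\le n$, this is exactly what is needed. Note $g_\omega$ is additive along steps: $g_\omega(x+z)=g_\omega(x)+F(T_x\omega,z)$, i.e. $g_{T_x\omega}(z)$ agrees with increments of $g_\omega$, a cocycle structure.

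Next, the \textbf{mean zero} and \textbf{moment} properties feed an $L^p$/ergodic estimate. For each fixed $z\in\mathcal{R}$, $F(\cdot,z)\in L^{d+\alpha}(\mathbb{P})$ with $\mathbb{E}[F(\cdot,z)]=0$. The multiparameter ergodic theorem then controls spatial averages of $F(T_x\omega,z)$ over boxes; more to the point, the classical argument (as in Zygmund/Garsia, and used in this exact form by Rosenbluth \cite{jeffrey} and Kosygina--Rezakhanlou--Varadhan \cite{KRV}) shows that an additive cocycle $g_\omega$ built from mean-zero increments with a $(d+\alpha)$-th moment satisfies $\lim_{|x|\to\infty} g_\omega(x)/|x|=0$ $\mathbb{P}$-a.s. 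The moment exponent $d+\alpha$ is precisely the threshold for a Borel--Cantelli argument to work in $d$ dimensions: summing $\mathbb{P}(|g_\omega(x)|>\epsilon|x|)$ over the $\asymp n^{d-1}$ lattice points with $|x|=n$ requires the tail of $|g_\omega(x)|/n$ to be $o(n^{-d})$, which a uniform $L^{d+\alpha}$ bound on the increments delivers via Markov's inequality after a maximal-inequality step. Concretely: fix $\epsilon>0$; by the sublinearity just described there is an a.s.-finite random radius $R_\epsilon(\omega)$ such that $|g_\omega(x)|\le\epsilon|x|$ whenever $|x|\ge R_\epsilon(\omega)$; set $c_\epsilon(\omega):=\sup_{|x|\le R_\epsilon(\omega)}|g_\omega(x)|+\epsilon R_\epsilon(\omega)$, which is finite because only finitely many lattice points satisfy $|x|\le R_\epsilon(\omega)$. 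Then for any $x$, $|g_\omega(x)|\le c_\epsilon(\omega)+\epsilon|x|\le c_\epsilon(\omega)+\epsilon n$ for an $n$-step path, since $|x_n|\le Bn$ — absorbing the factor $B$ into $\epsilon$ (or redefining), we get the stated form.

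The main obstacle is establishing the sublinearity $g_\omega(x)=o(|x|)$ rigorously from just a $(d+\alpha)$-th moment on the increments, \emph{without} extra ellipticity or independence — this is the delicate analytic core. The issue is that $g_\omega$ is a sum along a path, not a sum over a box, so one cannot directly invoke the multiparameter ergodic theorem; one must first pass from the path sum to an expression controllable by box averages. The standard device is: write $g_\omega(x)=\sum_{z\in\mathcal{R}}(\text{number of }z\text{-steps weighted by position})$, but more efficiently, bound $|g_\omega(x)|$ along a canonical (e.g. lexicographic) path from $0$ to $x$ by $\sum_{z\in\mathcal{R}}|\sum_{y\in P_z} F(T_y\omega,z)|$ where $P_z$ is the (structured, box-like) set of lattice sites from which a $z$-step is taken; each such partial sum over an increasing family of boxes is handled by the ergodic theorem giving $o(|x|)$, and a chaining/maximal-function argument upgrades this to uniformity over all endpoints $x$ on a sphere. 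The $d+\alpha$ moment is exactly what makes the maximal inequality plus Borel--Cantelli close in dimension $d$; with only a first moment one would get sublinearity along rays but not the uniform-in-direction statement needed here. I would carry out the box-reduction first, then the ergodic/maximal estimate, then Borel--Cantelli over spheres, and finally the bookkeeping to produce $c_\epsilon$.
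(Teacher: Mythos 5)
Your first step --- using the closed loop property (and $\mathcal{R}=-\mathcal{R}$) to promote the path sum to a well-defined cocycle $g_\omega(x)$ on $\mathbb{Z}^d$ --- is exactly what the paper does, and your reading of the lemma label is right: ``GRR'' is Garsia--Rodemich--Rumsey. But the analytic core of your outline, the Borel--Cantelli-over-spheres argument, does not close as stated, and it is precisely there that the paper takes a different route.

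The problem is that you cannot get a useful tail bound on $g_\omega(x)$ from the hypotheses in hand. For an endpoint $x$ at distance $n$, $g_\omega(x)$ is a sum of about $n$ increments, each of which you only know to lie in $L^{d+\alpha}(\mathbb{P})$ with mean zero; the measure $\mathbb{P}$ is merely stationary and ergodic, so there is no independence, no martingale structure, and no mixing to exploit. The triangle inequality gives $\|g_\omega(x)\|_{L^{d+\alpha}}\lesssim n$, and Markov then yields only $\mathbb{P}(|g_\omega(x)|>\epsilon n)=O(1)$, not the $o(n^{-d})$ you claim is ``delivered via Markov's inequality after a maximal-inequality step.'' Summing a bounded sequence over the $\asymp n^{d-1}$ points of a sphere diverges, and Borel--Cantelli is unavailable. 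The mean-zero condition does give a.s.\ sublinearity of $g_\omega$ \emph{along each fixed lattice direction} via the one-dimensional ergodic theorem, but upgrading this to uniformity over all directions is exactly the hard step, and you have not supplied a mechanism for it. Your ``box-reduction'' heuristic does not help here either: along a canonical (lexicographic) path, the sites at which a $z$-step is taken form a one-dimensional segment, not a $d$-dimensional box, so the multiparameter ergodic theorem does not apply directly.

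What the paper actually does (following Rosenbluth) is replace the probabilistic Borel--Cantelli argument with a deterministic compactness argument. One interpolates $f(\omega,\cdot)=g_\omega(\cdot)$ to a continuous function on $\mathbb{R}^d$, forms the rescalings $g_n(t):=f(\omega,nt)/n$, and then invokes the Garsia--Rodemich--Rumsey \emph{continuity theorem} (from \cite{SV79}): the $L^{d+\alpha}$ moment bound on the increments $F(\cdot,z)$ translates into a modulus-of-continuity estimate for the $g_n$ that is uniform in $n$ --- the exponent $d+\alpha$ is exactly what makes the relevant GRR integral converge in dimension $d$. This gives equicontinuity, hence precompactness of $(g_n)$ in $C$ on bounded sets. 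The ergodic theorem plus mean zero then show that $g_n(t)\to 0$ for each fixed rational $t$, so every subsequential limit vanishes on a dense set and hence identically; therefore $g_n\to 0$ uniformly on bounded sets, which is the desired sublinearity. In short, your high-level decomposition (cocycle, then sublinearity from mean zero and moment) is right, but the sublinearity needs GRR-type equicontinuity, not a Markov--Borel--Cantelli estimate, and your sketch omits this essential ingredient.
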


\begin{remark}\label{nilosmu}
Lemma \ref{GRR} is proved in Chapter 2 of \cite{jeffrey}. See Appendix B for a sketch of the proof. In his definition of class $\mathcal{K}$, Rosenbluth takes $F: \Omega\times U \to\mathbb{R}$. But, such functions uniquely extend to $\Omega\times\mathcal{R}$ by the closed loop condition in Definition \ref{K}, and Lemma \ref{GRR} remains to be valid without any extra work.
\end{remark}

\subsubsection{Equivalence of the bounds}

Consider a sequence $\left(\mathcal{E}_k\right)_{k\geq1}$ of finite $\sigma$-algebras such that
$\mathcal{B}=\sigma\left(\bigcup_k \mathcal{E}_k\right)$ and $\mathcal{E}_{k}\subset T_z\mathcal{E}_{k+1}$ for all $z\in\mathcal{R}$. Then, recall (\ref{putinh}) and see that $\Gamma(f)$ can be bounded below by
\begin{align}
&\sup_{\phi}\sup_{\hat{\pi}}\inf_{h}\int\sum_{z\in\mathcal{R}}\hat{\pi}(\omega,z)\left(f(\omega,z)-\log \frac{\hat{\pi}(\omega,z)}{\pi(0,z)}+h(\omega)-h(T_z\omega)\right)\phi\,\mathrm{d}\mathbb{P}\label{thingone}\\=&\sup_{\phi}\inf_{h}\sup_{\hat{\pi}}\int\sum_{z\in\mathcal{R}}\hat{\pi}(\omega,z)\left(f(\omega,z)-\log \frac{\hat{\pi}(\omega,z)}{\pi(0,z)}+h(\omega)-h(T_z\omega)\right)\phi\,\mathrm{d}\mathbb{P}\label{thingtwo}\\=&\sup_{\phi}\inf_{h}\sup_{\hat{\pi}}\int\sum_{z\in\mathcal{R}}\left[v(\omega,z)-\log\hat{\pi}(\omega,z)\right]\hat{\pi}(\omega,z)\phi\,\mathrm{d}\mathbb{P}\label{thingthree}\\=&\sup_{\phi}\inf_{h}\int\sup_{\hat{\pi}(\omega,\cdot)}\left(\sum_{z\in\mathcal{R}}[v(\omega,z)-\log\hat{\pi}(\omega,z)]\hat{\pi}(\omega,z)\right)\phi\,\mathrm{d}\mathbb{P}\label{thingfour}\\=&\sup_{\phi}\inf_{h}\int\left(\log\sum_{z\in\mathcal{R}}\mathrm{e}^{v(\omega,z)}\right)\phi\,\mathrm{d}\mathbb{P}\label{thingfive}\\=&\inf_{h}\sup_{\phi}\int\left(\log\sum_{z\in\mathcal{R}}\mathrm{e}^{v(\omega,z)}\right)\phi\,\mathrm{d}\mathbb{P}\label{thingsix}\\=&\inf_{h}\mathrm{ess}\sup_{\mathbb{P}}\log\sum_{z\in\mathcal{R}}\mathrm{e}^{v(\omega,z)}.\label{thingseven}
\end{align}
Explanation: In (\ref{thingone}), the first supremum is taken over $\mathcal{E}_k$-measurable probability densities, the second supremum is over $\mathcal{E}_k$-measurable environment kernels and the infimum is over bounded $\mathcal{B}$-measurable functions. For each $\phi$, the second supremum in (\ref{thingone}) is over a compact set, the integral is concave and continuous in $\hat{\pi}$ and affine (hence convex) in $h$. Thus, one can apply the minimax theorem of Ky Fan \cite{KyFan} and obtain (\ref{thingtwo}). The integral in (\ref{thingtwo}) can be evaluated in two steps by first taking a conditional expectation with respect to $\mathcal{E}_k$. This gives (\ref{thingthree}) where \[v(\omega,z):=\mathbb{E}\left[\log \pi(0,z) + f(\omega,z) + h(\omega) - h(T_z\omega)\left|\mathcal{E}_k\right.\right].\] The integrand in (\ref{thingthree}) is a local function of $\hat{\pi}(\omega,\cdot)$, therefore the supremum can be taken inside the integral to obtain (\ref{thingfour}). Apply the method of Lagrange multipliers and see that the supremum in (\ref{thingfour}) is attained at \[\hat{\pi}(\omega,z)=\frac{\mathrm{e}^{v(\omega,z)}}{\sum_{z'\in\mathcal{R}}\mathrm{e}^{v(\omega,z')}}.\] Plugging this back in (\ref{thingfour}) gives (\ref{thingfive}). The integral in (\ref{thingfive}) is convex in $h$, and affine (hence concave) and continuous in $\phi$. Plus, the supremum is taken over a compact set. Thus, one can again apply the minimax theorem of Ky Fan \cite{KyFan} and arrive at (\ref{thingsix}) which is clearly equal to (\ref{thingseven}).

Let us proceed with the proof. (\ref{thingseven}) implies that $\forall\epsilon >0$ and $k\geq1$, there exists an $h_{k,\epsilon}$ that satisfies
\begin{equation}
\log\sum_{z\in\mathcal{R}}\exp \mathbb{E}\left[\log \pi(0,z) + f(\omega,z) + h_{k,\epsilon}(\omega) - h_{k,\epsilon}(T_z\omega)\left|\mathcal{E}_k\right.\right]\leq\Gamma(f) + \epsilon\label{coklugot}
\end{equation} for $\mathbb{P}$-a.e.\ $\omega$. Therefore,
\begin{equation}
\mathbb{E}\left[h_{k,\epsilon}(\omega) - h_{k,\epsilon}(T_z\omega)\left|\mathcal{E}_k\right.\right]\leq\mathbb{E}\left[-\log \pi(0,z)\left|\mathcal{E}_k\right.\right]+\|f\|_{\infty}+\Gamma(f)+\epsilon\label{yarinbitersekral}
\end{equation} for each $z\in\mathcal{R}$. Define $F_{k,\epsilon}:\Omega\times\mathcal{R}\to\mathbb{R}$ by $F_{k,\epsilon}(\omega,z):=\mathbb{E}\left[h_{k,\epsilon}(\omega) - h_{k,\epsilon}(T_z\omega)\left|\mathcal{E}_{k-1}\right.\right]$. Then,
\begin{equation}
F_{k,\epsilon}(\omega,z)\leq\mathbb{E}\left[-\log \pi(0,z)\left|\mathcal{E}_{k-1}\right.\right]+\|f\|_{\infty}+\Gamma(f)+\epsilon\label{kirmizigul}
\end{equation} holds $\mathbb{P}$-a.s.\ for each $z\in\mathcal{R}$. Also, note that
\begin{align*}
-\mathbb{E}\left[h_{k,\epsilon}(\omega) - h_{k,\epsilon}(T_z\omega)\left|T_{-z}\mathcal{E}_k\right.\right]&=-\mathbb{E}\left[h_{k,\epsilon}(T_{-z}\omega) - h_{k,\epsilon}(\omega)\left|\mathcal{E}_k\right.\right](T_z\cdot)\\
&=\mathbb{E}\left[h_{k,\epsilon}(\omega) - h_{k,\epsilon}(T_{-z}\omega)\left|\mathcal{E}_k\right.\right](T_z\cdot)\\
&\leq\mathbb{E}\left[-\log \pi(0,-z)\left|\mathcal{E}_k\right.\right](T_z\cdot)+\|f\|_{\infty}+\Gamma(f)+\epsilon\\
&=\mathbb{E}\left[-\log \pi(z,0)\left|T_{-z}\mathcal{E}_k\right.\right]+\|f\|_{\infty}+\Gamma(f)+\epsilon
\end{align*} where the inequality follows from (\ref{yarinbitersekral}). Since $\mathcal{E}_{k-1}\subset T_{-z}\mathcal{E}_k$, taking conditional expectation with respect to $\mathcal{E}_{k-1}$ gives \[-F_{k,\epsilon}(\omega,z)\leq\mathbb{E}\left[-\log \pi(z,0)\left|\mathcal{E}_{k-1}\right.\right]+\|f\|_{\infty}+\Gamma(f)+\epsilon.\] Recall (\ref{kirmizigul}) and deduce that \[\left|F_{k,\epsilon}(\omega,z)\right|\leq\mathbb{E}\left[-\log \pi(0,z)\left|\mathcal{E}_{k-1}\right.\right]+\mathbb{E}\left[-\log \pi(z,0)\left|\mathcal{E}_{k-1}\right.\right]+\|f\|_{\infty}+\Gamma(f)+\epsilon.\] This implies by (\ref{kimimvarki}) that $\left(F_{k,\epsilon}(\cdot,z)\right)_{k\geq1}$ is uniformly bounded in $L^{d+\alpha}(\mathbb{P})$ for each $z\in\mathcal{R}$. Passing to a subsequence if necessary, $F_{k,\epsilon}(\cdot,z)$ converges weakly to a limit $F_{\epsilon}(\cdot,z)\in L^{d+\alpha}(\mathbb{P})$.

For $j\geq1$ and any sequence $(x_{i})_{i=0}^n$ in $\mathbb{Z}^d$ such that $x_{i+1}-x_i\in\mathcal{R}$ and $x_0=x_n$, 
\begin{align}
&\mathbb{E}\left(\left.\sum_{i=0}^{n-1}F_{\epsilon}(T_{x_i}\omega,x_{i+1}-x_i)\right|\mathcal{E}_j\right)\nonumber\\
&=\sum_{i=0}^{n-1}\mathbb{E}\left(\left.\lim_{k\to\infty}F_{k,\epsilon}(T_{x_i}\omega,x_{i+1}-x_i)\right|\mathcal{E}_j\right)\nonumber\\
&=\sum_{i=0}^{n-1}\lim_{k\to\infty}\mathbb{E}\left(\left.F_{k,\epsilon}(T_{x_i}\omega,x_{i+1}-x_i)\right|\mathcal{E}_j\right)\nonumber\\
&=\sum_{i=0}^{n-1}\lim_{k\to\infty}\mathbb{E}\left(\left.\mathbb{E}\left[h_{k,\epsilon}(\omega) - h_{k,\epsilon}(T_{x_{i+1}-x_i}\omega)\left|\mathcal{E}_{k-1}\right.\right](T_{x_i}\omega)\right|\mathcal{E}_j\right)\nonumber\\
&=\sum_{i=0}^{n-1}\lim_{k\to\infty}\mathbb{E}\left(\left.\mathbb{E}\left[h_{k,\epsilon}(T_{x_i}\omega) - h_{k,\epsilon}(T_{x_{i+1}}\omega)\left|T_{-x_i}\mathcal{E}_{k-1}\right.\right]\right|\mathcal{E}_j\right)\nonumber\\
&=\sum_{i=0}^{n-1}\lim_{k\to\infty}\mathbb{E}\left(\left.h_{k,\epsilon}(T_{x_i}\omega) - h_{k,\epsilon}(T_{x_{i+1}}\omega)\right|\mathcal{E}_j\right)\label{dursunmus}\\
&=\lim_{k\to\infty}\mathbb{E}\left(\left.\sum_{i=0}^{n-1}\left(h_{k,\epsilon}(T_{x_i}\omega) - h_{k,\epsilon}(T_{x_{i+1}}\omega)\right)\right|\mathcal{E}_j\right)=0\nonumber
\end{align} holds $\mathbb{P}$-a.s., where (\ref{dursunmus}) follows from the fact that $\mathcal{E}_j\subset T_{-x_i}\mathcal{E}_{k-1}$ whenever $k$ is large enough. Therefore, $\sum_{i=0}^{n-1}F_{\epsilon}(T_{x_i}\omega,x_{i+1}-x_i)=0$ for $\mathbb{P}$-a.e.\ $\omega$, and $F_{\epsilon}:\Omega\times\mathcal{R}\to\mathbb{R}$ satisfies the closed loop condition given in Definition \ref{K}. We already know that it satisfies the moment condition, and it is also clearly mean zero. Hence, $F_{\epsilon}\in\mathcal{K}$.

Since $\mathbb{E}\left[\log \pi(0,z) + f(\omega,z)\left|\mathcal{E}_{k-1}\right.\right]$ is an $L^{d+\alpha}(\mathbb{P})$-bounded martingale, it converges to $\log \pi(0,z) + f(\cdot,z)$ in $L^{d+\alpha}(\mathbb{P})$. Therefore, \[\mathcal{L}_{k,\epsilon}(\cdot,z):=\mathbb{E}\left[\log \pi(0,z) + f(\omega,z)\left|\mathcal{E}_{k-1}\right.\right]+F_{k,\epsilon}(\cdot,z)\] converges weakly in $L^{d+\alpha}(\mathbb{P})$ to $\log \pi(0,z) + f(\cdot,z)+F_{\epsilon}(\cdot,z)$. By Mazur's theorem (see \cite{Rudin}), there exist $\mathcal{L}_{k,\epsilon}':\Omega\times\mathcal{R}\to\mathbb{R}$ for $k\geq1$ such that $\mathcal{L}_{k,\epsilon}'(\cdot,z)$ converges strongly in $L^{d+\alpha}(\mathbb{P})$ to $\log \pi(0,z) + f(\cdot,z)+F_{\epsilon}(\cdot,z)$ for each $z\in\mathcal{R}$ and $\mathcal{L}_{k,\epsilon}'$ is a convex combination of $\{\mathcal{L}_{1,\epsilon},\mathcal{L}_{2,\epsilon},\ldots,\mathcal{L}_{k,\epsilon}\}$. Passing to a further subsequence, $\mathcal{L}_{k,\epsilon}'(\cdot,z)$ converges $\mathbb{P}$-a.s.\ to $\log \pi(0,z) + f(\cdot,z)+F_{\epsilon}(\cdot,z)$. Take conditional expectation of both sides of (\ref{coklugot}) with respect to $\mathcal{E}_{k-1}$ and use Jensen's inequality to write \[\log\sum_{z\in\mathcal{R}}\exp\left(\mathbb{E}\left[\log \pi(0,z) + f(\omega,z)\left|\mathcal{E}_{k-1}\right.\right]+F_{k,\epsilon}(\cdot,z)\right)\leq\Gamma(f) + \epsilon.\] Again by Jensen's inequality, $\log\sum_{z\in\mathcal{R}}\exp\left(\mathcal{L}_{k,\epsilon}'(\cdot,z)\right)\leq\Gamma(f) + \epsilon$. Taking $k\to\infty$ gives \[\log\sum_{z\in\mathcal{R}}\pi(0,z)\mathrm{e}^{f(\omega,z)+F_{\epsilon}(\omega,z)}\leq\Gamma(f) + \epsilon\] for $\mathbb{P}$-a.e.\ $\omega$. Theorem \ref{LMGF} is proved.

\subsection{Large deviation principle}\label{LDPproof}

Putting together (\ref{level2ratetilde}) and Theorem \ref{LMGF}, one sees that
\begin{align*}
\Lambda(f)&=\sup_{\mu\in M_{1,s}^{\ll}(\Omega\times\mathcal{R})}\int\sum_{z\in\mathcal{R}}\mathrm{d}\mu(\omega,z)\left(f(\omega,z)-\log \frac{\mathrm{d}\mu(\omega,z)}{\mathrm{d}(\mu)^1(\omega)\pi(0,z)}\right)\\
&=\sup_{\mu\in M_{1,s}^{\ll}(\Omega\times\mathcal{R})}\left\{\left\langle f,\mu\right\rangle - \mathfrak{I}(\mu)\right\}\\
&=\sup_{\mu\in M_1(\Omega\times\mathcal{R})}\left\{\left\langle f,\mu\right\rangle - \mathfrak{I}(\mu)\right\}\\
&=\mathfrak{I}^*(f),
\end{align*} the Fenchel-Legendre transform of $\mathfrak{I}$. Therefore, $\mathfrak{I}^{**}=\Lambda^*$.

Since $M_1(\Omega\times\mathcal{R})$ is compact,
\begin{displaymath}
\limsup_{n\rightarrow\infty}\frac{1}{n}\log P_o^{\omega}(\nu_{n,X}\in C)\leq-\inf_{\mu\in C}\Lambda^*(\mu)=-\inf_{\mu\in C}\mathfrak{I}^{**}(\mu)
\end{displaymath}
for $\mathbb{P}$-a.e.\ $\omega$ and any closed subset $C$ of $M_1(\Omega\times\mathcal{R})$. (See Theorem 4.5.3 of \cite{DemboZeitouni}.)

To conclude the proof of Theorem \ref{level2LDP}, one needs to obtain the LDP lower bound. Note that, for any open subset $G$ of $M_1(\Omega\times\mathcal{R})$, $\inf_{\nu\in G}\mathfrak{I}^{**}(\nu)=\inf_{\nu\in G}\mathfrak{I}(\nu)$. (See \cite{Rockafellar}, page 104.) Therefore, it suffices to show that, for any $\mu\in M_{1,s}^{\ll}(\Omega\times\mathcal{R})$, any open set $O$ containing $\mu$ and $\mathbb{P}$-a.e.\ $\omega$,
\begin{equation}
\liminf_{n\rightarrow\infty}\frac{1}{n}\log P_o^{\omega}(\nu_{n,X}\in O)\geq-\mathfrak{I}(\mu).\label{LB}
\end{equation} 
Take the pair \[(\hat{\pi},\phi):=\left(\frac{\mathrm{d}\mu}{\mathrm{d}(\mu)^1},\frac{\mathrm{d}(\mu)^1}{\mathrm{d}\mathbb{P}}\right)\] corresponding to a given $\mu\in M_{1,s}^{\ll}(\Omega\times\mathcal{R})$. Then, $\phi\in L^1(\mathbb{P})$, $\phi\,\mathrm{d}\mathbb{P}$ is a $\hat{\pi}$-invariant probability measure, and $\hat{\pi}(\cdot,z)>0$ $\mathbb{P}$-a.s.\ for each $z\in U$. With this notation, (\ref{LB}) becomes \[\liminf_{n\rightarrow\infty}\frac{1}{n}\log P_o^{\omega}(\nu_{n,X}\in O)\geq-\int_{\Omega}\sum_{z\in\mathcal{R}}\hat{\pi}(\omega,z)\log\frac{\hat{\pi}(\omega,z)}{\pi(0,z)}\phi(\omega)\mathrm{d}\mathbb{P}.\] Recall Definition \ref{ortamkeli} and introduce a new measure $R_o^{\hat{\pi},\omega}$ by setting \[\mathrm{d}R_o^{\hat{\pi},\omega}:=\frac{\one_{\nu_{n,X}\in O}}{P_o^{\hat{\pi},\omega}(\nu_{n,X}\in O)}\,\mathrm{d}P_o^{\hat{\pi},\omega}.\] Then,
\begin{align*}
\liminf_{n\rightarrow\infty}\frac{1}{n}\log P_o^\omega(\nu_{n,X}\in O)=&\liminf_{n\rightarrow\infty}\frac{1}{n}\log E_o^{\hat{\pi},\omega}\left[\one_{\nu_{n,X}\in O}\,\frac{\mathrm{d}P_o^\omega}{\mathrm{d}P_o^{\hat{\pi},\omega}}\right]\\
=&\liminf_{n\rightarrow\infty}\frac{1}{n}\left(\log P_o^{\hat{\pi},\omega}(\nu_{n,X}\in O)+\log \int\frac{\mathrm{d}P_o^\omega}{\mathrm{d}P_o^{\hat{\pi},\omega}}\mathrm{d}R_o^{\hat{\pi},\omega}\right)\\
\geq&\liminf_{n\rightarrow\infty}\frac{1}{n}\left(\log P_o^{\hat{\pi},\omega}(\nu_{n,X}\in O)- \int\log\frac{\mathrm{d}P_o^{\hat{\pi},\omega}}{\mathrm{d}P_o^\omega}\mathrm{d}R_o^{\hat{\pi},\omega}\right)\\
=&\liminf_{n\rightarrow\infty}\frac{1}{n}\left(\log P_o^{\hat{\pi},\omega}(\nu_{n,X}\in O)-\frac{1}{P_o^{\hat{\pi},\omega}(\nu_{n,X}\in O)} E_o^{\hat{\pi},\omega}\left[\one_{\nu_{n,X}\in O}\,\log\frac{\mathrm{d}P_o^{\hat{\pi},\omega}}{\mathrm{d}P_o^\omega}\right]\right)
\end{align*} where the third line uses Jensen's inequality. It follows from Lemma \ref{Kozlov} that $\lim_{n\rightarrow\infty}P_o^{\hat{\pi},\omega}(\nu_{n,X}\in O)=1$. Therefore,
\begin{align*}
\liminf_{n\rightarrow\infty}\frac{1}{n}\log P_o^\omega(\nu_{n,X}\in O)&\geq-\limsup_{n\rightarrow\infty}\frac{1}{n}E_o^{\hat{\pi},\omega}\left[\one_{\nu_{n,X}\in O}\,\log\frac{\mathrm{d}P_o^{\hat{\pi},\omega}}{\mathrm{d}P_o^\omega}\right]\\
&=-\int_{\Omega}\sum_{z\in\mathcal{R}}\hat{\pi}(\omega,z)\log\frac{\hat{\pi}(\omega,z)}{\pi(0,z)}\phi(\omega)\mathrm{d}\mathbb{P}
\end{align*} again by Lemma \ref{Kozlov} and the $L^1$-ergodic theorem. Theorem \ref{level2LDP} is proved. Finally, note that the convexity of $\mathfrak{I}$ follows from an argument similar to the proof of Lemma \ref{camilla}.
\begin{remark}
$\mathfrak{I}^{**}$ is a good rate function since $M_1(\Omega\times\mathcal{R})$ is compact.
\end{remark}

\section{Contraction principle and the Ansatz for the minimizer}\label{birboyutadonus}

\begin{proof}[Proof of Corollary \ref{level1LDP}]
Recall (\ref{ximu}) and observe that \[\xi_{\nu_{n,X}}=\int\sum_{z\in\mathcal{R}}\mathrm{d}\nu_{n,X}(\omega,z)z=\frac{1}{n}\sum_{k=0}^{n-1}\left(X_{k+1}-X_k\right)=\frac{X_n-X_o}{n}.\] Therefore, as noted in Subsection \ref{results}, Corollary \ref{level1LDP} follows from Theorem \ref{level2LDP} by the contraction principle (see \cite{DemboZeitouni}), and the rate function is given by (\ref{level1rate}). 

In order to justify (\ref{level1ratetilde}), let us define $J:\mathbb{R}^d\rightarrow\mathbb{R}^+$ by $J(\xi)=\inf_{\mu\in A_\xi}\mathfrak{I}(\mu)$. We would like to show that $J\equiv I$. Since $\mathfrak{I}$ and $\mathfrak{I}^{**}$ are convex, $I$ and $J$ are convex functions on $\mathbb{R}^d$. Therefore, it suffices to show that $J^*\equiv I^*$. For any $\eta\in\mathbb{R}^d$, define $f_{\eta}\in C_b(\Omega\times\mathcal{R})$ by $f_{\eta}(\omega,z):=\langle z,\eta\rangle$. Recalling (\ref{ximu}),
\begin{eqnarray*}
I^*(\eta)&=&\sup_{\xi}\{\langle\eta,\xi\rangle - \inf_{\mu\in A_\xi}\mathfrak{I}^{**}(\mu)\}\\
&=&\sup_{\xi}\sup_{\mu\in A_\xi}\{\langle\eta,\xi_{\mu}\rangle - \mathfrak{I}^{**}(\mu)\}\\
&=&\sup_{\mu\in M_1(\Omega\times\mathcal{R})}\{\langle f_{\eta},\mu\rangle - \mathfrak{I}^{**}(\mu)\}\\
&=&\mathfrak{I}^{***}(f_{\eta})=\Lambda(f_{\eta}).
\end{eqnarray*}
Similarly, $J^*(\eta)=\mathfrak{I}^*(f_{\eta})=\Lambda(f_{\eta})$. We are done.
\end{proof}

\begin{proof}[Proof of Lemma \ref{lagrange}]
The rate function given by formula (\ref{level1ratetilde}) is
\begin{equation}
I(\xi)=\inf_{\mu\in A_\xi\cap M_{1,s}^{\ll}(\Omega\times\mathcal{R})}\int_{\Omega}\sum_{z\in\mathcal{R}} \mathrm{d}\mu(\omega,z)\log\frac{\mathrm{d}\mu(\omega,z)}{\mathrm{d}(\mu)^1(\omega)\pi(0,z)}.\label{budur}
\end{equation} Fix a $\xi=(\xi_1,\ldots,\xi_d)\in\mathbb{R}^d$ with $|\xi_1|+\cdots+|\xi_d|\leq B$. (Otherwise, the set $A_\xi$ is empty.) If there exists a $\mu_\xi\in A_\xi\cap M_{1,s}^{\ll}(\Omega\times\mathcal{R})$ such that \[\mathrm{d}\mu_\xi(\omega,z)=\mathrm{d}(\mu_\xi)^1(\omega) \pi(0,z)\mathrm{e}^{\langle\theta,z\rangle+F(\omega,z)+ r}\] for some $\theta\in\mathbb{R}^d$, $F\in\mathcal{K}$ and $r\in\mathbb{R}$, then for any $\nu\in A_\xi\cap M_{1,s}^{\ll}(\Omega\times\mathcal{R})$,
\begin{align*}
\mathfrak{I}(\nu)&=\int_{\Omega}\sum_{z\in\mathcal{R}} \mathrm{d}\nu(\omega,z)\log\frac{\mathrm{d}\nu(\omega,z)}{\mathrm{d}(\nu)^1(\omega)\pi(0,z)}\\
&=\int_{\Omega}\sum_{z\in\mathcal{R}} \mathrm{d}\nu(\omega,z)\log\frac{\mathrm{d}\nu(\omega,z)\mathrm{e}^{\langle\theta,z\rangle+F(\omega,z)+r}}{\mathrm{d}(\nu)^1(\omega)\pi(0,z)\mathrm{e}^{\langle\theta,z\rangle+F(\omega,z)+r}}\\
&=\int_{\Omega}\sum_{z\in\mathcal{R}} \mathrm{d}\nu(\omega,z)\left(\langle\theta,z\rangle+F(\omega,z)+r+\log\frac{\mathrm{d}\nu(\omega,z)\;\mathrm{d}(\mu_\xi)^1(\omega)}{\mathrm{d}(\nu)^1(\omega)\;\mathrm{d}\mu_\xi(\omega,z)}\right)\\ &=\langle\theta,\xi\rangle+r+\int_{\Omega}\sum_{z\in\mathcal{R}} \mathrm{d}\nu(\omega,z)F(\omega,z)+\int_{\Omega}\sum_{z\in\mathcal{R}} \mathrm{d}\nu(\omega,z)\log\frac{\mathrm{d}\nu(\omega,z)\;\mathrm{d}(\mu_\xi)^1(\omega)}{\mathrm{d}(\nu)^1(\omega)\;\mathrm{d}\mu_\xi(\omega,z)}.
\end{align*}
Under the Markov kernel $\frac{\mathrm{d}\nu}{\mathrm{d}(\nu)^1}$ with invariant measure $(\nu)^1$, $\mathbb{P}$-a.s.\[\lim_{n\rightarrow\infty}\frac{1}{n}\sum_{k=0}^{n-1}F(T_{X_k}\omega,X_{k+1}-X_k)=\int_{\Omega}\sum_{z\in\mathcal{R}} \mathrm{d}\nu(\omega,z)F(\omega,z)\] by Lemma \ref{Kozlov} and the ergodic theorem. But, the same limit is $0$ by Lemma \ref{GRR}. Therefore,
\begin{equation}
\mathfrak{I}(\nu)=\langle\theta,\xi\rangle+r+\int_{\Omega}\sum_{z\in\mathcal{R}} \mathrm{d}\nu(\omega,z)\log\frac{\mathrm{d}\nu(\omega,z)\;\mathrm{d}(\mu_\xi)^1(\omega)}{\mathrm{d}(\nu)^1(\omega)\;\mathrm{d}\mu_\xi(\omega,z)}.
\label{sifirladik}
\end{equation}
By an application of Jensen's inequality, it is easy to see that the integral on the RHS of (\ref{sifirladik}) is nonnegative. Moreover, this integral is zero if and only if $\frac{\mathrm{d}\nu}{\mathrm{d}(\nu)^1}=\frac{\mathrm{d}\mu_\xi}{\mathrm{d}(\mu_\xi)^1}$ holds $(\nu)^1$-a.s.\ and hence $\mathbb{P}$-a.s.\ by Lemma \ref{Kozlov}. Since $(\mu_\xi)^1$ is the unique invariant measure of $\frac{\mathrm{d}\mu_\xi}{\mathrm{d}(\mu_\xi)^1}$ that is absolutely continuous relative to $\mathbb{P}$ (again by Lemma \ref{Kozlov}), $\mu_\xi$ is the unique minimizer of (\ref{budur}).
\end{proof}

\section{Nearest-neighbor walks on $\mathbb{Z}$}\label{vandiseksin}

In this section, we carry out the recipe given in Lemma \ref{lagrange} and prove Theorem \ref{explicitformulah} in the case of nearest-neighbor random walk on $\mathbb{Z}$ in a stationary and ergodic environment. As mentioned in Subsection \ref{results}, we assume that the following holds:
\begin{enumerate}
%\item [(A1)] The support of $\mathbb{P}$ is not a singleton, i.e., the environment is not deterministic.
%\item [(A2)] There exists $\delta>0$ such that $\mathbb{P}(\pi(0,\pm 1)\geq\delta)=1$. This is called ``uniform ellipticity".
\item [(A1)] There exists an $\alpha>0$ such that $\int|\log\pi(0,\pm1)|^{1+\alpha}\mathrm{d}\mathbb{P}<\infty$.
\end{enumerate}

\begin{proof}[Proof of Lemma \ref{lifeisrandom} for nearest-neighbor walks on $\mathbb{Z}$]
Define $\zeta(r,\omega):=E_o^\omega\left[\mathrm{e}^{r\tau_1},\tau_1<\infty\right]$ for any $r\in\mathbb{R}$. Then, 
\begin{align*}
\lambda(r)&=\lim_{n\to\infty}\frac{1}{n}\log E_o^\omega\left[\mathrm{e}^{r\tau_n},\tau_n<\infty\right]=\lim_{n\to\infty}\frac{1}{n}\log\left(\prod_{k=0}^{n-1}E_k^\omega\left[\mathrm{e}^{r\tau_{k+1}},\tau_{k+1}<\infty\right]\right)\\&=\lim_{n\to\infty}\frac{1}{n}\sum_{k=0}^{n-1}\log\zeta(r,T_{k}\omega)=\mathbb{E}\left[\log\zeta(r,\cdot)\right]
\end{align*} by the ergodic theorem, where the limits hold for $\mathbb{P}$-a.e.\ $\omega$. If $\zeta(r,\omega)$ is finite, then
\begin{align}
\zeta(r,\omega)&=\pi(0,1)\mathrm{e}^r+\pi(0,-1)\mathrm{e}^r\zeta(r,T_{-1}\omega)\zeta(r,\omega),\nonumber\\
1&=\pi(0,1)\mathrm{e}^r\zeta(r,\omega)^{-1}+\pi(0,-1)\mathrm{e}^r\zeta(r,T_{-1}\omega).\label{masterof}
\end{align} Since $\pi(0,-1)>0$ holds $\mathbb{P}$-a.s., the set $\{\omega:\zeta(r,\omega)<\infty\}$ is $T$-invariant, and its $\mathbb{P}$-probability is $0$ or $1$. The function $r\mapsto\zeta(r,\omega)$ is strictly increasing. There exists an $r_c\geq0$ such that $\mathbb{P}\left(\omega: \zeta(r,\omega)<\infty\right)=1$ if $r<r_c$ and $\mathbb{P}\left(\omega: \zeta(r,\omega)=\infty\right)=1$ if $r>r_c$. When $r<r_c$,
\begin{align*}
E_o^\omega\left[\mathrm{e}^{r\tau_1},\tau_1<\infty\right]&\geq E_o^\omega\left[\mathrm{e}^{r\tau_1}, X_k=-k, X_{2k}=0, \tau_1<\infty\right]\\&=\mathrm{e}^{2rk}\left(\prod_{i=0}^{-k+1}\pi(i,i-1)\prod_{j=-k}^{-1}\pi(j,j+1)\right)E_o^\omega\left[\mathrm{e}^{r\tau_1},\tau_1<\infty\right]
\end{align*} for any $k\geq1$. Cancelling the $E_o^\omega\left[\mathrm{e}^{r\tau_1},\tau_1<\infty\right]$ term on both sides and taking logarithms give $$2rk + \sum_{i=0}^{-k+1}\log\pi(i,i-1) + \sum_{j=-k}^{-1}\log\pi(j,j+1) \leq 0.$$ Divide both sides by $k$, let $k\to\infty$, and see that $2r\leq-\mathbb{E}[\log\pi(0,-1)] - \mathbb{E}[\log\pi(0,1)]$ by the ergodic theorem. This, in combination with (A1), implies that $r_c<\infty.$

By (\ref{masterof}), $1\geq\pi(0,-1)\mathrm{e}^r\zeta(r,T_{-1}\omega)$ and $\log\zeta(r,T_{-1}\omega)\leq-\log\pi(0,-1)-r$. Thus, \begin{equation}\lambda(r)=\mathbb{E}[\log\zeta(r,\cdot)]\leq\int|\log\pi(0,-1)|\mathrm{d}\mathbb{P}-r<\infty\label{turran}\end{equation} for $r<r_c$, and also for $r=r_c$ by the monotone convergence theorem. %In particular, $\zeta(r_c,\omega)<\infty$ holds for $\mathbb{P}$-a.e.\ $\omega$.

It is easy to see that $r\mapsto\lambda(r)=\mathbb{E}[\log\zeta(r,\cdot)]$ is analytic on $(-\infty,r_c)$. Assumption (A1) ensures that the walk under $P_o^\omega$ is not deterministic, therefore
$$\lambda''(r)=\mathbb{E}\left[\frac{E_o^\omega\left[\tau_1^2\mathrm{e}^{r\tau_1},\tau_1<\infty\right]}{E_o^\omega\left[\mathrm{e}^{r\tau_1},\tau_1<\infty\right]} - \left(\frac{E_o^\omega\left[\tau_1\mathrm{e}^{r\tau_1},\tau_1<\infty\right]}{E_o^\omega\left[\mathrm{e}^{r\tau_1},\tau_1<\infty\right]}\right)^2\right]$$ is strictly positive by Jensen's inequality. Hence, $r\mapsto\lambda(r)$ is strictly convex on $(-\infty,r_c)$.
Recall that $\xi_c:=\lambda'(r_c-)^{-1}$. Use again the fact that the walk under $P_o^\omega$ is not deterministic, and write \[\xi_c^{-1}=\lambda'(r_c-)\geq\lambda'(0-)=\mathbb{E}\left(E_o^\omega[\left.\tau_1\right|\tau_1<\infty]\right)>1.\] We have proved half of Lemma \ref{lifeisrandom}, namely the statements involving $r\mapsto\lambda(r)$. Simply replace $\tau_n$ by $\bar{\tau}_{-n}$ to prove the other half of the lemma.

What remains to be shown is that the same $r_c$ works for $\lambda(\cdot)$ and $\bar{\lambda}(\cdot)$. This is proved in Appendix C.
\end{proof}

Let us start the construction. Note that $$\lim_{r\to-\infty}\lambda'(r)=\lim_{r\to-\infty}\mathbb{E}\left(\frac{E_o^\omega\left[\tau_1\mathrm{e}^{r\tau_1},\tau_1<\infty\right]}{E_o^\omega\left[\mathrm{e}^{r\tau_1},\tau_1<\infty\right]}\right)=1.$$ The map $r\mapsto\lambda'(r)$ is strictly increasing and, therefore, it is a bijection from $(-\infty,r_c)$ to $(1,\xi_c^{-1})$. In other words, for any $\xi\in(\xi_c,1)$, there is a unique $r=r(\xi)<r_c$ such that $\xi^{-1}=\lambda'(r)$.

Taking $r=r(\xi)$, recall (\ref{masterof}) and define an environment kernel $\hat{\pi}_r$ (in the sense of Definition \ref{ortamkeli}) by
\begin{equation}
\hat{\pi}_r(\omega,1):=\pi(0,1)\mathrm{e}^r\zeta(r,\omega)^{-1}\quad\mbox{and}\quad\hat{\pi}_r(\omega,-1):=\pi(0,-1)\mathrm{e}^r\zeta(r,T_{-1}\omega).\label{yyedin}
\end{equation}
For every $x\in\mathbb{Z}$, in order to simplify the notation, $P_x^{\hat{\pi}_r,\omega}, E_x^{\hat{\pi}_r,\omega}, P_x^{\hat{\pi}_r}$ and $E_x^{\hat{\pi}_r}$ are denoted by $P_x^{r,\omega}, E_x^{r,\omega}, P_x^r$ and $E_x^r$, respectively.
For $\mathbb{P}$-a.e.\ $\omega$ and any finite sequence $(x_{k})_{k=0}^n$ in $\mathbb{Z}$ such that $x_{k+1}-x_k\in U$ and $x_n=1$, it is easy to see that
\begin{equation}\label{kfer}
P_o^{r,\omega}(X_1=x_1, \ldots, X_n = x_n) = \mathrm{e}^{rn}\zeta(r,\omega)^{-1}P_o^\omega(X_1=x_1, \ldots, X_n = x_n).
\end{equation}

\begin{lemma}
$P_o^{r}(\tau_1<\infty)=1$.
\end{lemma}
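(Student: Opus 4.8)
The plan is to show that under the tilted measure $P_o^{r}$ the walk is transient to the right, so that $\tau_1 < \infty$ almost surely. The key tool will be Lemma \ref{Kozlov} applied to the environment kernel $\hat{\pi}_r$ defined in (\ref{yyedin}): I first observe that $\hat{\pi}_r(\cdot, \pm 1) > 0$ $\mathbb{P}$-a.s.\ (immediate from (\ref{yyedin}), since $\pi(0,\pm 1) > 0$ and $\zeta(r,\cdot)$ is finite and positive for $r < r_c$), and then I must produce a $\hat{\pi}_r$-invariant probability measure $\mathbb{Q}_r \ll \mathbb{P}$. The natural candidate is $\mathrm{d}\mathbb{Q}_r := \zeta(r,\omega)\zeta(r,T_{-1}\omega)^{-1} \cdot c(r)^{-1}\,\mathrm{d}\mathbb{P}$ or some similar explicit density built from $\zeta$; I would verify the invariance equation $\phi(\omega) = \sum_{z = \pm 1}\phi(T_{-z}\omega)\hat{\pi}_r(T_{-z}\omega, z)$ directly using the recursion (\ref{masterof}) for $\zeta$. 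One has to check that this density is in $L^1(\mathbb{P})$ with finite, positive normalizing constant — this uses the bound $\log\zeta(r,T_{-1}\omega) \le -\log\pi(0,-1) - r$ from (\ref{turran}) together with (A1), which controls $\mathbb{E}[\log\zeta]$; getting genuine $L^1$ integrability of $\zeta$ itself (not just its log) is the point that needs a little care, and may require the observation that $\zeta(r,\omega) \le 1/(\pi(0,-1)\mathrm{e}^r)$ is \emph{not} in general bounded, so one should instead work with the ratio $\zeta(r,\omega)/\zeta(r,T_{-1}\omega)$ or absorb the unbounded factor correctly.

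Once Lemma \ref{Kozlov} applies, part (d) gives the LLN: $P_o^{r}$-a.s., $\frac{X_n}{n} \to v_r := \int \sum_{z = \pm 1} \hat{\pi}_r(\omega,z)\, z \,\mathrm{d}\mathbb{Q}_r$. I then compute $v_r$ and show $v_r > 0$, which forces $X_n \to +\infty$ and hence $\tau_1 < \infty$ $P_o^{r}$-a.s., i.e.\ $P_o^r(\tau_1 < \infty) = 1$. To evaluate $v_r$ I would use that $\mathbb{Q}_r$ is the marginal $(\mu)^1$ of the measure $\mu$ with Radon--Nikodym derivative $\hat{\pi}_r$, so that $v_r = \xi_\mu$ in the notation of (\ref{ximu}); a cleaner route is to differentiate: the tilt (\ref{yyedin}) is chosen precisely so that $r = r(\xi)$ satisfies $\xi^{-1} = \lambda'(r)$, and one expects $v_r = \xi \in (\xi_c, 1) \subset (0,1)$, so positivity is automatic. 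Alternatively — and this avoids computing $\mathbb{Q}_r$ at all — one can argue directly from (\ref{kfer}): summing $P_o^{r,\omega}(X_1 = x_1,\dots,X_n = x_n) = \mathrm{e}^{rn}\zeta(r,\omega)^{-1} P_o^\omega(X_1 = x_1,\dots,X_n = x_n)$ over all nearest-neighbor paths of length $n$ from $0$ that first hit $1$ at time $n$ gives $P_o^{r,\omega}(\tau_1 = n) = \mathrm{e}^{rn}\zeta(r,\omega)^{-1} P_o^\omega(\tau_1 = n, \tau_1 < \infty)$, and summing over $n$ yields $P_o^{r,\omega}(\tau_1 < \infty) = \zeta(r,\omega)^{-1} E_o^\omega[\mathrm{e}^{r\tau_1}, \tau_1 < \infty] = \zeta(r,\omega)^{-1}\zeta(r,\omega) = 1$ for $\mathbb{P}$-a.e.\ $\omega$, whence $P_o^r(\tau_1 < \infty) = 1$ after integrating over $\omega$.

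Given that this second argument is essentially immediate from (\ref{kfer}) and the definition $\zeta(r,\omega) = E_o^\omega[\mathrm{e}^{r\tau_1},\tau_1<\infty]$, I expect the proof the author intends is precisely this: the only subtlety is making sure (\ref{kfer}) may be summed over the (countably many) finite paths — which is legitimate since all terms are nonnegative (Tonelli) — and that the paths realizing $\{\tau_1 = n\}$ are exactly those with $x_n = 1$ and $x_k \le 0$ for $k < n$, so that the hypothesis $x_n = 1$ in (\ref{kfer}) is met and the identity $\sum_n P_o^\omega(\tau_1 = n, \tau_1 < \infty)\mathrm{e}^{rn} = \zeta(r,\omega)$ holds by definition. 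The main (minor) obstacle is just bookkeeping with (\ref{kfer}): verifying that its stated hypothesis "$x_{k+1} - x_k \in U$ and $x_n = 1$" does not also require $x_k \ne 1$ for $k < n$, and if it does, restricting the sum to first-passage paths accordingly. No genuine difficulty arises; I would present the direct computation from (\ref{kfer}) as the proof.
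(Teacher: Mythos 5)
Your second, direct argument is exactly the paper's proof: by (\ref{kfer}), $P_o^{r,\omega}(\tau_1<\infty)=E_o^\omega[\mathrm{e}^{r\tau_1}\zeta(r,\omega)^{-1},\tau_1<\infty]=1$ for $\mathbb{P}$-a.e.\ $\omega$. Your worry about whether (\ref{kfer}) implicitly requires first-passage paths is unfounded — the telescoping of the $\zeta$-factors works for any nearest-neighbor path from $0$ to $1$, so restricting the sum to paths realizing $\{\tau_1=n\}$ is just a choice of which terms to sum; the Kozlov-based detour in your first paragraph is unnecessary (and in fact the paper only establishes the existence of $\phi_r$ after this lemma).
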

\begin{proof}
For $\mathbb{P}$-a.e.\ $\omega$, $$P_o^{r,\omega}(\tau_1<\infty)=E_o^\omega[\mathrm{e}^{r\tau_1}\zeta(r,\omega)^{-1},\tau_1<\infty]=\zeta(r,\omega)^{-1}E_o^\omega[\mathrm{e}^{r\tau_1},\tau_1<\infty]=1$$ where the first equality follows from (\ref{kfer}).
\end{proof}

%It suffices to show that $P_o^{r,\omega}(\tau_{-1}'<\infty)<1$ holds for $\mathbb{P}$-a.e.\ $\omega$. It follows from (\ref{yyedin}) and (\ref{kfer}) that
%\begin{align*}
%P_o^{r,\omega}(\tau_{-1}'<\infty)&=E_o^\omega[\mathrm{e}^{r\tau_{-1}'}\zeta(r,T_{-1}\omega),\tau_{-1}'<\infty]\\
%&=E_o^\omega[\mathrm{e}^{r\tau_{-1}'},\tau_{-1}'<\infty]E_{-1}^\omega[\mathrm{e}^{r\tau_o},\tau_o<\infty]\\
%&\leq\mathrm{e}^{2(r-r_c)}E_o^\omega[\mathrm{e}^{r_c\tau_{-1}'},\tau_{-1}'<\infty]E_{-1}^\omega[\mathrm{e}^{r_c\tau_o},\tau_o<\infty].
%\end{align*} On the other hand, for any $n\geq1$,
%\begin{align*}
%E_o^\omega[\mathrm{e}^{r_c\tau_n},\tau_n<\infty]&\geq E_o^\omega[\mathrm{e}^{r_c\tau_n},\tau_{-1}'<\tau_n<\infty]\\
%&=E_o^\omega[\mathrm{e}^{r_c\tau_{-1}'},\tau_{-1}'<\tau_n]E_{-1}^\omega[\mathrm{e}^{r_c\tau_n},\tau_n<\infty]\\
%&=E_o^\omega[\mathrm{e}^{r_c\tau_{-1}'},\tau_{-1}'<\tau_n]E_{-1}^\omega[\mathrm{e}^{r_c\tau_o},\tau_o<\infty]E_o^\omega[\mathrm{e}^{r_c\tau_n},\tau_n<\infty]
%\end{align*} and we simplify this to get $1\geq E_o^\omega[\mathrm{e}^{r_c\tau_{-1}'},\tau_{-1}'<\tau_n]E_{-1}^\omega[\mathrm{e}^{r_c\tau_o},\tau_o<\infty]$. Taking $n\to\infty$ gives $E_o^\omega[\mathrm{e}^{r_c\tau_{-1}'},\tau_{-1}'<\infty]E_{-1}^\omega[\mathrm{e}^{r_c\tau_o},\tau_o<\infty]\leq1.$ Since $r-r_c<0$, we conclude that $P_o^{r,\omega}(\tau_{-1}'<\infty)\leq\mathrm{e}^{2(r-r_c)}<1$.

\begin{lemma}\label{shmeryahu}
$E_o^{r}[\tau_1]=\xi^{-1}<\infty$.
\end{lemma}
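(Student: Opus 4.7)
The plan is to express the quenched expectation $E_o^{r,\omega}[\tau_1]$ as the logarithmic derivative $\partial_r\log\zeta(r,\omega)$ via (\ref{kfer}), average over $\omega$, and interchange the $\mathbb{P}$-expectation with the derivative to identify $E_o^r[\tau_1]$ with $\lambda'(r)$. Since the tilt parameter $r=r(\xi)$ was chosen precisely so that $\lambda'(r)=\xi^{-1}$, and since $\xi\in(\xi_c,1)\subset(0,1)$ already forces $\xi^{-1}\in(1,\infty)$, finiteness is automatic once the equality is established.

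First I would decompose $\tau_1$ by its value $n$ and apply (\ref{kfer}) to paths from $0$ that first hit $1$ at time $n$, obtaining $\mathbb{P}$-a.s.\
\[
E_o^{r,\omega}[\tau_1]=\sum_{n\geq 1} n\,P_o^{r,\omega}(\tau_1=n)=\zeta(r,\omega)^{-1}E_o^\omega\bigl[\tau_1\mathrm{e}^{r\tau_1},\tau_1<\infty\bigr]=\partial_r\log\zeta(r,\omega).
\]
The differentiation under the sum defining $\zeta(r,\omega)$ is routine for $r<r_c$: the bound $n\mathrm{e}^{rn}\leq C(r'')\mathrm{e}^{r''n}$ for any chosen $r''\in(r,r_c)$, together with $\zeta(r'',\omega)<\infty$ $\mathbb{P}$-a.s.\ from Lemma \ref{lifeisrandom}, justifies it.

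Averaging over $\omega$ gives $E_o^r[\tau_1]=\mathbb{E}[\partial_r\log\zeta(r,\cdot)]$, and the remaining (and main) step is to exchange $\mathbb{E}$ with $\partial_r$ so that this equals $\partial_r\mathbb{E}[\log\zeta(r,\cdot)]=\lambda'(r)$. The hard part is producing an integrable dominating function, and the natural tool is the convexity of $r\mapsto\log\zeta(r,\omega)$ that already appeared in the proof of Lemma \ref{lifeisrandom}. Concretely, I would choose $\delta>0$ with $[r-\delta,r+\delta]\subset(-\infty,r_c)$; convexity then sandwiches the difference quotient $h^{-1}[\log\zeta(r+h,\omega)-\log\zeta(r,\omega)]$ for $|h|<\delta$ between the two fixed quotients at $\pm\delta$, both of which are $\mathbb{P}$-integrable since $\lambda(r\pm\delta)$ is finite by Lemma \ref{lifeisrandom}. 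Dominated convergence then yields
\[
\mathbb{E}[\partial_r\log\zeta(r,\cdot)]=\lim_{h\to 0}\frac{\lambda(r+h)-\lambda(r)}{h}=\lambda'(r)=\xi^{-1},
\]
which is the desired identity, and finiteness is immediate as noted above.
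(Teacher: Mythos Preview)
Your proof is correct and follows essentially the same route as the paper's: both use (\ref{kfer}) to identify $E_o^{r,\omega}[\tau_1]$ with $\partial_r\log\zeta(r,\omega)$ (the paper does this via the moment generating function $E_o^{r,\omega}[\mathrm{e}^{s\tau_1}]=\zeta(r+s,\omega)/\zeta(r,\omega)$, then differentiates at $s=0$), average over $\omega$, and identify the result with $\lambda'(r)=\xi^{-1}$. Your treatment is in fact more careful about the interchange of $\mathbb{E}$ and $\partial_r$, which the paper leaves implicit; the convexity-and-dominated-convergence argument you give is exactly the right justification, noting that the two bounding difference quotients are nonnegative (since $r\mapsto\zeta(r,\omega)$ is increasing) with finite $\mathbb{P}$-expectation $\delta^{-1}[\lambda(r\pm\delta)-\lambda(r)]$.
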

\begin{proof}
For any $s\in\mathbb{R}$ and $\mathbb{P}$-a.e.\ $\omega$, recall (\ref{kfer}) and observe that
\begin{align*}E_o^{r,\omega}[\mathrm{e}^{s\tau_1}]=E_o^{r,\omega}[\mathrm{e}^{s\tau_1},\tau_1<\infty]&=E_o^\omega[\mathrm{e}^{(r+s)\tau_1}\zeta(r,\omega)^{-1},\tau_1<\infty]\\&=\zeta(r+s,\omega)\zeta(r,\omega)^{-1}.
\end{align*}
Therefore, $\mathbb{E}\left(\log E_o^{r,\omega}[\mathrm{e}^{s\tau_1}]\right)=\lambda(r+s)-\lambda(r)<\infty$ by (\ref{turran}) whenever $r+s<r_c$, and \[E_o^{r}[\tau_1]=\left.\frac{\mathrm{d}}{\mathrm{d}s}\right|_{s=0}\!\!\!\!\!\mathbb{E}\left(\log E_o^{r,\omega}[\mathrm{e}^{s\tau_1}]\right)=\lambda'(r)=\xi^{-1}.\qedhere\]
\end{proof}
Since $\hat{\pi}_r(\cdot,\pm1)>0$ holds $\mathbb{P}$-a.s., there exists a $\phi_r\in L^1(\mathbb{P})$ such that $\phi_r\,\mathrm{d}\mathbb{P}$ is a $\hat{\pi}_r$-invariant probability measure. (See, for example, \cite{alili}.) The pair $(\hat{\pi}_r,\phi_r)$ corresponds to a $\mu_\xi\in M_{1,s}^{\ll}(\Omega\times U)$ with $\mathrm{d}\mu_\xi(\omega,\pm1)=\hat{\pi}_r(\omega,\pm1)\phi_r(\omega)\mathrm{d}\mathbb{P}(\omega)$. By Lemma \ref{Kozlov}, the LLN for the mean velocity of the particle holds under $P_o^{r}$. The limiting velocity is \[\int\sum_{z\in U}\hat{\pi}_r(\omega,z)z\,\phi_r(\omega)\mathrm{d}\mathbb{P}=\xi_{\mu_\xi}\] with the notation in (\ref{ximu}). Therefore, $\xi_{\mu_\xi}^{-1}=E_o^{r}[\tau_1]=\xi^{-1}$ by Lemma \ref{shmeryahu}. In other words, $\mu_\xi\in A_\xi$.

Define $F_r:\Omega\times\{-1,1\}\to\mathbb{R}$ by setting \[F_r(\omega,-1):=\log\zeta(r,T_{-1}\omega)-\lambda(r)\quad\mbox{and}\quad F_r(\omega,1):=-\log\zeta(r,\omega)+\lambda(r).\] Then, recall (\ref{yyedin}) and see that
\begin{equation}
\mathrm{d}\mu_\xi(\omega,z)=\hat{\pi}_r(\omega,z)\phi_r(\omega)\mathrm{d}\mathbb{P}(\omega)=\mathrm{d}(\mu_\xi)^1(\omega)\pi(0,z)\mathrm{e}^{-z\lambda(r)+F_r(\omega,z)+r}\label{veriguut}
\end{equation} for $z\in\{-1,1\}$. In order to conclude that $\mu_\xi$ fits the Ansatz given in Lemma \ref{lagrange}, $F_r\in\mathcal{K}$ needs to be shown. $F_r$ clearly satisfies the mean zero and the closed loop conditions in Definition \ref{K}. For $z\in\{-1,1\}$,
\[\pi(0,z)\mathrm{e}^{-z\lambda(r)+F_r(\omega,z)+r}=\hat{\pi}_r(\omega,z)\leq1\] gives $F_r(\omega,z)\leq|\log\pi(0,z)| +z\lambda(r)-r$. Use the fact that $-F_r(\omega,z)=F_r(T_z\omega,-z)$ to write $$|F_r(\omega,z)|\leq|\log\pi(0,1)|+|\log\pi(1,0)|+|\lambda(r)|-r.$$ The moment condition on $F_r(\cdot,z)$ follows from (A1).

So far, we have obtained a $\mu_\xi$ that fits the Ansatz given in Lemma \ref{lagrange} when $\xi\in(\xi_c,1)$. An analogous construction works for $\xi\in(-1,\bar{\xi}_c)$.

\begin{proof}[Proof of Theorem \ref{explicitformulah} for nearest-neighbor walks on $\mathbb{Z}$]
For any $\xi\in(\xi_c,1)$, the measure $\mu_\xi$ given in (\ref{veriguut}) is the unique minimizer of (\ref{level1ratetilde}) by Lemma \ref{lagrange}. Therefore, $I(\xi) = \mathfrak{I}(\mu_\xi)=r(\xi)-\xi\lambda(r(\xi))$ by (\ref{sifirladik}). Since $\lambda'(r(\xi))=\xi^{-1}$, it is clear that $$I(\xi)=\sup_{r\in\mathbb{R}}\left\{r-\xi\lambda(r)\right\}=\xi\sup_{r\in\mathbb{R}}\left\{r\xi^{-1}-\lambda(r)\right\}=\xi\lambda^{*}(\xi^{-1}).$$

In the proof of Lemma \ref{lifeisrandom} for nearest-neighbor walks on $\mathbb{Z}$, we saw that $r\mapsto\lambda(r)$ is strictly convex and analytic on $(-\infty,r_c)$. By convex duality, $\xi\mapsto I(\xi)$ is strictly convex and analytic on $(\xi_c,1)$.

If $\xi_c=0$, then we have identified $I(\cdot)$ on $(0,1)$. Let us now suppose $\xi_c>0$. Note that $$I'(\xi)=\frac{\mathrm{d}}{\mathrm{d}\xi}[r(\xi)-\xi\lambda(r(\xi))]=r'(\xi)-\lambda(r(\xi))-\xi\lambda'(r(\xi))r'(\xi)=-\lambda(r(\xi)).$$ Therefore, $I(\xi_c)-\xi_cI'(\xi_c+)=r_c$. This implies by convexity that $I(0)\geq r_c$. On the other hand, $$E_o^\omega[\mathrm{e}^{r\tau_1},\tau_1<\infty]=\sum_{k=1}^{\infty}\mathrm{e}^{rk}P_o^\omega(\tau_1=k)\leq\sum_{k=1}^{\infty}\mathrm{e}^{rk}P_o^\omega(X_k=1)\leq\sum_{k=1}^{\infty}\mathrm{e}^{(r-I(0))k + o(k)}<\infty$$ for any $r<I(0)$. Hence, $r_c = I(0)$. The equality $I(\xi_c)-\xi_cI'(\xi_c+)=I(0)$ forces $I(\cdot)$ to be affine linear on $[0,\xi_c]$ with a slope of $I'(\xi_c+)$. In particular, $\xi\mapsto I(\xi)$ is differentiable on $(0,1)$.

Still supposing $\xi_c>0$, fix $\xi\in(0,\xi_c]$. Then, $\frac{\mathrm{d}}{\mathrm{d}r}\left(r-\xi\lambda(r)\right)>0$ for every $r<r_c$. Therefore, $$\sup_{r\in\mathbb{R}}\left\{r-\xi\lambda(r)\right\}=r_c-\xi\lambda(r_c)=I(0)+\xi I'(\xi_c+)=I(\xi).$$ In short, $I(\xi)=\sup_{r\in\mathbb{R}}\left\{r-\xi\lambda(r)\right\}$ for every $\xi\in(0,1)$.

Let us no longer suppose $\xi_c>0$. At $\xi=1$,
\begin{align*}
\sup_{r\in\mathbb{R}}\left\{r-\lambda(r)\right\}&=\sup_{r\in\mathbb{R}}\mathbb{E}\left[r-\log E_o^\omega\left[\mathrm{e}^{r\tau_1},\tau_1<\infty\right]\right]=\lim_{r\to-\infty}\mathbb{E}\left[-\log E_o^\omega\left[\mathrm{e}^{r(\tau_1-1)},\tau_1<\infty\right]\right]\\
&=\mathbb{E}\left[-\log P_o^\omega\left(\tau_1=1\right)\right]=\mathbb{E}\left[-\log\pi(0,1)\right]=-\lim_{n\to\infty}\frac{1}{n}\log P_o^\omega\left(X_n=n\right)=I(1).
\end{align*}
It is easy to check that $I(\xi)=\sup_{r\in\mathbb{R}}\left\{r-\xi\lambda(r)\right\}=\infty$ when $\xi>1$. This concludes the proof of Theorem \ref{explicitformulah} for $\xi\geq0$. The arguments regarding $\xi<0$ are similar.
\end{proof}

\section{Walks with bounded jumps on $\mathbb{Z}$}\label{dolapdere}

Recall the statement of Lemma \ref{lifeisrandom}. We start this section by constructing a new (tilted) environment kernel $\hat{\pi}_r$ for every $r<r_c$. We then prove that $r\mapsto\lambda(r)$ exists and that it is differentiable on $(-\infty,r_c)$. At that point, we note that if there exists a $\phi_r\in L^1(\mathbb{P})$ such that $\phi_r\,\mathrm{d}\mathbb{P}$ is a $\hat{\pi}_r$-invariant probability measure, then $\mu_\xi\in M_1(\Omega\times\mathcal{R})$ with $\mathrm{d}\mu_\xi(\omega,z)=\hat{\pi}_r(\omega,z)\phi_r(\omega)\mathrm{d}\mathbb{P}(\omega)$ fits the Ansatz given in Lemma \ref{lagrange} for $\xi=(\lambda'(r))^{-1}$. We proceed by constructing such a $\phi_r$. Finally, we prove Lemma \ref{lifeisrandom}, Theorem \ref{explicitformulah} and Proposition \ref{Wronskian}.

\begin{remark}
Some of the notation (e.g., $\zeta(r,\omega), r_c, \hat{\pi}_r, \phi_r$ and $F_r$) introduced in Section \ref{vandiseksin} is reintroduced in Section \ref{dolapdere} in a slightly different way. This is done in order to emphasize the fact that the arguments in these two sections are parallel. Note that this practice does not cause any confusion since Sections \ref{vandiseksin} and \ref{dolapdere} can be read independently of each other.
\end{remark}

Many of the arguments in this section use the following lemma.
%\begin{itemize}
%\item Construct $\hat{\pi}_r$. Get uniform bounds for $E_o^{r,\omega}[\tau_1^p]$.
%\item Show that $\lambda(r)$ exists for $r<r_c$. Prove that it is differentiable.
%\item Prove that $\hat{\pi}_r$ fits the Ansatz and has LLN velocity $(\lambda'(r))^{-1}$. Therefore, we need $\phi$.
%\item Obtain $\phi$.
%\item Prove Lemma 2 (show strict convexity using the uniqueness of the Ansatz), Theorem 2 and Proposition 1.
%\end{itemize}

 %As before, $\tau_y:=\inf\{k\geq0: X_k\geq y\}$ denotes the right passage time for any $y\in\mathbb{Z}$. In Subsections \ref{haifaguzel} and \ref{telavivguzel}, we make use of the following lemma.
\begin{lemma}\label{muhacir}
Given $m\in\mathbb{Z}$ and $\epsilon>0$, suppose there exist two functions $L:\left((-\infty,m)\cap\mathbb{Z}\right)\times\mathcal{R}\to[0,1]$ and $v:\mathbb{Z}\to\mathbb{R}$ such that $L(y,\pm1)\geq\epsilon$, $\sum_{z\in\mathcal{R}}L(y,z)=1$ and $v(y)=\sum_{z\in\mathcal{R}}L(y,z)v(y+z)$ for any $y<m$. The function $L$ defines a Markov chain and, for any $x<m$, induces a probability measure $Q_x$ on paths starting at $x$. $E_x^Q$ denotes expectation under $Q_x$.

If $Q_x(\tau_m<\infty)=1$ and $x'<x$, then
\begin{equation}\label{faikinbaci}
|v(x)-v(x')|\leq \left(1-\epsilon^B\right)^{\frac{m-x}{B}}\sup_{0\leq z<B \atop 0\leq z'<B}\left[v(m+z)-v(m+z')\right].
\end{equation}
\end{lemma}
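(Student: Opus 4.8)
The plan is to exploit the fact that $v$ is harmonic for the Markov chain $L$ on $(-\infty,m)\cap\mathbb{Z}$, so that $v(X_{n\wedge\tau_m})$ is a bounded martingale under $Q_x$, and then to extract a geometric contraction rate from the uniform ellipticity $L(y,\pm1)\ge\epsilon$. Write $\Delta:=\sup_{0\le z,z'<B}\bigl[v(m+z)-v(m+z')\bigr]$, the oscillation of $v$ on the ``boundary block'' $\{m,\dots,m+B-1\}$, which is where the chain lands (since jumps are bounded by $B$, from below $m$ the chain first reaches $\{m,\dots,m+B-1\}$). First I would note that, by the optional stopping theorem applied to the martingale $v(X_{n\wedge\tau_m})$ together with $Q_x(\tau_m<\infty)=1$ and boundedness, $v(x)=E_x^Q[v(X_{\tau_m})]$, and likewise $v(x')=E_{x'}^Q[v(X_{\tau_m})]$. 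Hence $|v(x)-v(x')|$ is controlled by how much the law of $X_{\tau_m}$ can differ between the two starting points, i.e. by a coupling/total-variation argument, times $\Delta$.

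The key quantitative input is: starting from any $y<m$, there is probability at least $\epsilon^B$ of reaching $\{m,\dots,m+B-1\}$ within the next $B$ steps purely by taking $+1$ steps — indeed $B$ consecutive $+1$ steps from $y$ move the walk to $y+B\ge m+(\text{something in }[0,B))$ once $y\ge m-B$, and more generally, from any $y<m$, a run of $\lceil(m-y)/1\rceil$ up-steps reaches $\{m,\dots\}$, but to get a clean rate one groups time into blocks of length $B$. Concretely I would argue: in each block of $B$ steps, conditionally on not having yet reached level $m$, the walk has probability at least $\epsilon^B$ of making $B$ up-steps, which increases its position by $B$; so after $k$ such blocks it has, with probability at least $1-(1-\epsilon^B)^k$, advanced by $kB$, and therefore $Q_y(\tau_m > kB)\le (1-\epsilon^B)^k$ as soon as $y+kB\ge m$, i.e. for $k\ge (m-y)/B$. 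Taking $y=x$ (the larger of the two starting points, so the bound is worst there is fine since $x'<x$ only makes reaching $m$ slower — one should actually run the argument from $x'$, the lower point; $Q_{x'}(\tau_m>kB)\le(1-\epsilon^B)^k$ for $k\ge (m-x')/B$, and since $x'<x<m$ we have $(m-x)/B\le (m-x')/B$, so it is cleanest to phrase the decay in terms of $(m-x)/B$ via a monotone coupling of the two chains).

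I would then set up a coupling of the two chains started at $x$ and $x'$: run them independently until the first time $N$ that both have exited into $\{m,\dots,m+B-1\}$; by the block estimate, $Q(N>kB)\le (1-\epsilon^B)^k$ for $k\ge (m-x)/B$ (using that $x'<x$, so the chain from $x'$ is the ``slower'' one, but positional monotonicity — $L$ is not assumed monotone, so instead I would just bound $N$ by the exit time of the lower starting point $x'$, and note $(m-x')\ge (m-x)$, which unfortunately gives the exponent $(m-x')/B$, not $(m-x)/B$). To recover exactly the stated exponent $(m-x)/B$ I would instead argue directly on $v$: define $w(y):=v(y)$ and observe $|v(x)-v(x')|\le E^Q\bigl[\,\bigl|v(X_{\tau_m})-v(X'_{\tau_m})\bigr|\,\bigr]\le \Delta\cdot Q(\text{the two exit points differ})$, and bound the latter by the probability that the chain from $x$ has not yet reached level $m$ by a time at which a chain from $x'$ trivially has — here one uses that from $x$ the walk needs to climb only $m-x$, giving $Q_x(\tau_m>kB)\le(1-\epsilon^B)^k$ for $kB\ge m-x$; combined with a deterministic lower bound on how far $x'$ can lag, the oscillation bound $\Delta$ absorbs the rest.

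\textbf{Main obstacle.} The delicate point is getting the exponent to be exactly $(m-x)/B$ rather than $(m-x')/B$ or $(m-x)/B - O(1)$, and handling non-integer $(m-x)/B$: one must be careful that $(1-\epsilon^B)^{(m-x)/B}$ with a real exponent is what comes out of the $k=\lceil(m-x)/B\rceil$ block estimate combined with $1-\epsilon^B<1$. I expect the cleanest route is: show by induction on $m-x$ (in steps of $B$) that $\mathrm{osc}(v$ on $\{x,\dots\}$ reached before $\tau_m)\le (1-\epsilon^B)\cdot\mathrm{osc}$ over one block closer, using at each stage the harmonicity $v(y)=\sum_z L(y,z)v(y+z)$ together with the mass $\ge\epsilon^B$ that a $B$-step up-run removes from the ``not yet arrived'' event; iterating $\lceil(m-x)/B\rceil$ times and bounding $\lceil(m-x)/B\rceil\ge (m-x)/B$ in the exponent of a quantity $<1$ yields (\ref{faikinbaci}). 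The recurrence hypothesis $Q_x(\tau_m<\infty)=1$ is used only to guarantee the martingale $v(X_{n\wedge\tau_m})$ converges $Q_x$-a.s.\ and in $L^1$ to $v(X_{\tau_m})$, justifying $v(x)=E_x^Q[v(X_{\tau_m})]$.
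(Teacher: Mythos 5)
There is a genuine gap. Your skeleton (optional stopping to write $v(x)=E_x^Q[v(X_{\tau_m})]$, $v(x')=E_{x'}^Q[v(X_{\tau_m})]$, then a geometric rate from $L(y,\pm1)\geq\epsilon$) is reasonable, but the step that actually carries the estimate is never supplied: you reduce $|v(x)-v(x')|$ to ``$\Delta$ times the probability that the two exit points differ,'' which requires a coupling of $Q_x$ and $Q_{x'}$ under which the chains exit $\{m,\dots,m+B-1\}$ at the \emph{same} site with probability at least $1-(1-\epsilon^B)^{(m-x)/B}$. You do not construct such a coupling, and the candidates you mention do not work as sketched: running the chains independently gives no control on coincidence of exit sites; a monotone coupling is unavailable (as you note, $L$ is not monotone and jumps of size up to $B$ let the paths leapfrog without meeting); and the time estimate $Q_x(\tau_m>kB)\leq(1-\epsilon^B)^k$ bounds how long the walk takes to exit, which says nothing about how close the two exit \emph{laws} (or exit values of $v$) are. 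The closing ``induction on $m-x$ in steps of $B$'' for an oscillation contraction is the right kind of idea but is left entirely unproved, and it is exactly where the work lies — including your own worry about getting the exponent $(m-x)/B$ rather than $(m-x')/B$.

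The paper's proof avoids coupling altogether and resolves the exponent issue by anchoring the blocks at the \emph{higher} starting point $x$. For each $k$ with $x+(k+1)B\leq m$, harmonicity and optional stopping give $v(x')=\sum_{z=0}^{B-1}Q_{x'}(X_{\tau_{x+kB}}=x+kB+z)\,v(x+kB+z)$, so some point $x_k\in[x+kB,x+(k+1)B)$ satisfies $v(x_k)\leq v(x')$; collect these into $S$. The chain from $x$, before reaching level $m$, enters each such block (jumps are bounded by $B$), and from the entry point it hits the designated $x_k$ within the block with probability at least $\epsilon^{B}$ using only $\pm1$ steps; by the Markov property across the roughly $(m-x)/B$ blocks, $Q_x(\tau_S=\infty)\leq(1-\epsilon^B)^{(m-x)/B}$. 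Stopping at $\tau_S\wedge\tau_m$ then yields $v(x)\leq v(x')+Q_x(\tau_S=\infty)\bigl(\sup_{0\leq z<B}v(m+z)-v(x')\bigr)$, and since $v(x')\geq\inf_{0\leq z'<B}v(m+z')$ this gives one half of (\ref{faikinbaci}); the other half is symmetric. If you want to salvage your route, you must either prove the coalescence estimate for a genuine coupling or make the per-block oscillation contraction precise; the paper's device of replacing ``the two walks meet'' by ``the walk from $x$ hits a single pre-selected point per block where $v\leq v(x')$'' is what makes the argument one-sided, elementary, and gives the exponent $(m-x)/B$.
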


\begin{proof}
Fix $x'<x<m$. For any $k\geq0$ with $x+(k+1)B\leq m$, $$v(x')=E_{x'}^Q\left[v\left(X_{\tau_{x+kB}}\right)\right]=\sum_{z=0}^{B-1}Q_{x'}\left(X_{\tau_{x+kB}}=x+kB+z\right)v(x+kB+z).$$ There exists an $x_k\in\mathbb{Z}$ such that $x+kB\leq x_k<x+(k+1)B$ and $v(x_k)\leq v(x')$. The collection of $x_k$'s constitute a set $S:=\left\{x_k:0\leq k\leq\frac{m-x}{B}-1\right\}$. Let $\tau_S:=\inf\left\{k\geq0:X_k\in S\right\}$. Observe that
\begin{align*}
v(x)=E_x^Q\left[v\left(X_{\tau_S\wedge\tau_m}\right)\right]&=E_x^Q\left[v\left(X_{\tau_S}\right),\tau_S<\infty\right]+E_x^Q\left[v\left(X_{\tau_m}\right),\tau_S=\infty\right]\\
&\leq Q_x(\tau_S<\infty)v(x')\quad\ \ \,+Q_x(\tau_S=\infty)\sup_{0\leq z<B}v(m+z)\\
&=v(x')+Q_x(\tau_S=\infty)\left(\sup_{0\leq z<B}v(m+z)-v(x')\right).
\end{align*}
On the other hand, $v(x')=E_{x'}^Q\left[v\left(X_{\tau_m}\right)\right]\geq\inf_{0\leq z'<B}v(m+z')$. Therefore, $$v(x)-v(x')\leq Q_x(\tau_S=\infty)\sup_{0\leq z<B \atop 0\leq z'<B}\left[v(m+z)-v(m+z')\right].$$
It is easy to see that $Q_x(\tau_S=\infty)\leq\left(1-\epsilon^B\right)^{\frac{m-x}{B}}$. This proves half of (\ref{faikinbaci}). The other half is proved similarly.
\end{proof}

\subsection{Construction of a new environment kernel $\hat{\pi}_r$}\label{haifaguzel}

Recall our assumptions:
\begin{enumerate}
\item [(A1)] There exists an $\alpha>0$ such that $\int|\log\pi(0,z)|^{1+\alpha}\mathrm{d}\mathbb{P}<\infty$ for each $z\in\mathcal{R}$.
%\item [(A1)] The support of $\mathbb{P}$ is not a singleton, i.e., the environment is not deterministic.
\item [(A2)] There exists a $\delta>0$ such that $\mathbb{P}(\pi(0,\pm 1)\geq\delta)=1$.
\end{enumerate}

Let $\zeta(r,\omega):=E_o^\omega\left[\mathrm{e}^{r\tau_1},\tau_1<\infty\right]$ for any $r\in\mathbb{R}$. If $\zeta(r,\omega)<\infty$, then
\begin{align*}
\zeta(r,\omega)&=\sum_{z\in\mathcal{R}}\pi(0,z)\mathrm{e}^rE_z^\omega\left[\mathrm{e}^{r\tau_1},\tau_1<\infty\right]\geq\pi(0,-1)\mathrm{e}^rE_{-1}^\omega\left[\mathrm{e}^{r\tau_1},\tau_1<\infty\right]\\
&\geq\delta\mathrm{e}^r\left(E_{-1}^\omega\left[\mathrm{e}^{r\tau_1},\tau_1<\infty, X_{\tau_o}\geq1\right] + E_{-1}^\omega\left[\mathrm{e}^{r\tau_1},\tau_1<\infty, X_{\tau_o}=0\right]\right)\\
&=\delta\mathrm{e}^r\left(E_{-1}^\omega\left[\mathrm{e}^{r\tau_o},\tau_o<\infty, X_{\tau_o}\geq1\right] + E_{-1}^\omega\left[\mathrm{e}^{r\tau_o},\tau_o<\infty, X_{\tau_o}=0\right]E_o^\omega\left[\mathrm{e}^{r\tau_1},\tau_1<\infty\right]\right)\\
&\geq\min(1,\zeta(r,\omega))\delta\mathrm{e}^r\zeta(r,T_{-1}\omega).
\end{align*}
Therefore, $\{\omega:\zeta(r,\omega)<\infty\}$ is $T$-invariant, and its $\mathbb{P}$-probability is $0$ or $1$. The function $r\mapsto\zeta(r,\omega)$ is strictly increasing. There exists an $r_c\geq0$ such that $\mathbb{P}\left(\omega: \zeta(r,\omega)<\infty\right)=1$ if $r<r_c$ and $\mathbb{P}\left(\omega: \zeta(r,\omega)=\infty\right)=1$ if $r>r_c$. When $r<r_c$, $$E_o^\omega\left[\mathrm{e}^{r\tau_1},\tau_1<\infty\right]\geq E_o^\omega\left[\mathrm{e}^{r\tau_1}, X_1=-1, X_2=0, \tau_1<\infty\right]\geq\left(\delta\mathrm{e}^r\right)^2E_o^\omega\left[\mathrm{e}^{r\tau_1},\tau_1<\infty\right].$$ This shows that $\delta\mathrm{e}^r\leq1$ and $r_c\leq-\log\delta<\infty$. For $r<r_c$ and $n\geq2$,
\begin{align*}
E_o^\omega\left[\mathrm{e}^{r\tau_n},\tau_n<\infty\right]&=\sum_{z=1}^{B}E_o^\omega\left[\mathrm{e}^{r\tau_1},\tau_1<\infty,X_{\tau_1}=z\right]E_z^\omega\left[\mathrm{e}^{r\tau_n},\tau_n<\infty\right]\\
&=\sum_{z=1}^{B}E_o^\omega\left[\mathrm{e}^{r\tau_1},\tau_1<\infty,X_{\tau_1}=z\right]E_o^{T_z\omega}\left[\mathrm{e}^{r\tau_{n-z}},\tau_{n-z}<\infty\right].
\end{align*} By induction, $\mathbb{P}\left(\omega: E_o^\omega\left[\mathrm{e}^{r\tau_n},\tau_n<\infty\right]<\infty\right)=1$.

From now on, consider $r<r_c$. For $x<n$, note that
\begin{align}
u_{r,n}(\omega,x):\!&=\frac{E_x^\omega\left[\mathrm{e}^{r\tau_n},\tau_n<\infty\right]}{E_o^\omega\left[\mathrm{e}^{r\tau_n},\tau_n<\infty\right]}=\sum_{z\in\mathcal{R}}\pi(x,x+z)\mathrm{e}^r\frac{E_{x+z}^\omega\left[\mathrm{e}^{r\tau_n},\tau_n<\infty\right]}{E_o^\omega\left[\mathrm{e}^{r\tau_n},\tau_n<\infty\right]}\nonumber\\
&=\sum_{z\in\mathcal{R}}\pi(x,x+z)\mathrm{e}^ru_{r,n}(\omega,x+z).\nonumber\\
1&=\sum_{z\in\mathcal{R}}\pi(x,x+z)\mathrm{e}^r\frac{u_{r,n}(\omega,x+z)}{u_{r,n}(\omega,x)}=:\sum_{z\in\mathcal{R}}\hat{\pi}_{r,n}(x,x+z)\label{huseyindayi}
\end{align} defines a new (random) transition kernel $\hat{\pi}_{r,n}(x,x+z)$ for $x<n$. It is clear that the jumps under $\hat{\pi}_{r,n}$ are bounded by $B$. If $x<y<n$, then
$$E_{y}^\omega\left[\mathrm{e}^{r\tau_n},\tau_n<\infty\right]\geq E_{y}^\omega\left[\mathrm{e}^{r\tau_n},X_1=y-1,\ldots,X_{y-x}=x,\tau_n<\infty\right]\geq(\delta\mathrm{e}^r)^{y-x}E_{x}^\omega\left[\mathrm{e}^{r\tau_n},\tau_n<\infty\right].$$ Similarly, $E_{x}^\omega\left[\mathrm{e}^{r\tau_n},\tau_n<\infty\right]\geq(\delta\mathrm{e}^r)^{y-x}E_{y}^\omega\left[\mathrm{e}^{r\tau_n},\tau_n<\infty\right]$. Therefore,
\begin{equation}
(\delta\mathrm{e}^r)^{|y-x|}\leq\frac{u_{r,n}(\omega,y)}{u_{r,n}(\omega,x)}\leq(\delta\mathrm{e}^r)^{-|y-x|}.\label{isilam}
\end{equation}
Putting (\ref{huseyindayi}) and (\ref{isilam}) together, we obtain the following ellipticity bound:
\begin{equation}
\mathbb{P}\left(\hat{\pi}_{r,n}(x,x\pm1)\geq(\delta\mathrm{e}^r)^2\right)=1\mbox{ for every }x<n-1.\label{reginbogin}
\end{equation}\pagebreak

\begin{lemma}\label{Qgot}
If $0<x<n+B$, then $(\delta\mathrm{e}^r)^{4(B-1)}\leq u_{r,n}(\omega,x)E_o^\omega\left[\mathrm{e}^{r\tau_x},\tau_x<\infty\right]\leq(\delta\mathrm{e}^r)^{-4(B-1)}$ for $\mathbb{P}$-a.e.\ $\omega$.
\end{lemma}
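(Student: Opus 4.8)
The plan is to decompose the passage time $\tau_x$ into two stages and use the previously established ellipticity bounds. First I would note that since $0 < x < n+B$, we have $x-1 < n$, so the quantities $u_{r,n}(\omega, \cdot)$ and the kernel $\hat{\pi}_{r,n}$ are defined in the relevant range. The key identity is the multiplicative (Markov) decomposition: for the walk started at $0$, to reach level $n$ it must first reach some point in $\{x, x+1, \ldots, x+B-1\}$ (the overshoot is at most $B-1$), so
\begin{equation*}
E_o^\omega\left[\mathrm{e}^{r\tau_n},\tau_n<\infty\right]=\sum_{z=0}^{B-1}E_o^\omega\left[\mathrm{e}^{r\tau_x},\tau_x<\infty,X_{\tau_x}=x+z\right]E_{x+z}^\omega\left[\mathrm{e}^{r\tau_n},\tau_n<\infty\right].
\end{equation*}
Dividing by $E_o^\omega[\mathrm{e}^{r\tau_x},\tau_x<\infty]\cdot E_o^\omega[\mathrm{e}^{r\tau_n},\tau_n<\infty]$ expresses the target product $u_{r,n}(\omega,x)E_o^\omega[\mathrm{e}^{r\tau_x},\tau_x<\infty]$ in terms of a weighted average (with weights summing to $1$) of the ratios $u_{r,n}(\omega,x+z)/u_{r,n}(\omega,x)$ for $0 \le z < B$, together with the overshoot probabilities $E_o^\omega[\mathrm{e}^{r\tau_x},\tau_x<\infty,X_{\tau_x}=x+z]/E_o^\omega[\mathrm{e}^{r\tau_x},\tau_x<\infty]$, which also sum to $1$.

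Second I would control the two families of ratios separately. The ratios $u_{r,n}(\omega,x+z)/u_{r,n}(\omega,x)$ for $0 \le z < B$ are pinned between $(\delta\mathrm{e}^r)^{B-1}$ and $(\delta\mathrm{e}^r)^{-(B-1)}$ by the bound (\ref{isilam}) (with $|y-x| \le B-1$). For the overshoot weights, I would use uniform ellipticity (A2) in the same spirit as the derivation of (\ref{isilam}): from any point in $\{x+1,\dots,x+B-1\}$ the walk can step down to $x$ with probability at least $\delta$ at each step, forcing the walk to first hit level $x$ exactly at $x$ with enough probability; conversely a single up-jump from $0$ can overshoot. This shows each overshoot weight, hence the full convex combination, contributes a bounded multiplicative distortion of size at most $(\delta\mathrm{e}^r)^{-O(B)}$. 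Collecting the exponents and being slightly generous gives the stated bound with exponent $4(B-1)$.

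I expect the only real bookkeeping issue — not a conceptual obstacle — to be getting the exponent exactly $4(B-1)$ rather than merely $O(B)$: one must track carefully that the overshoot-probability ratios and the $u_{r,n}$-ratios each cost at most $2(B-1)$ powers of $(\delta\mathrm{e}^r)$. The cleanest route is to bound $E_o^\omega[\mathrm{e}^{r\tau_x},\tau_x<\infty,X_{\tau_x}=x+z]$ below by $(\delta\mathrm{e}^r)^{z}E_o^\omega[\mathrm{e}^{r\tau_{x+z}},\tau_{x+z}<\infty,X_{\tau_{x+z}}=x+z]$ and to compare $E_o^\omega[\mathrm{e}^{r\tau_x},\tau_x<\infty]$ with $E_o^\omega[\mathrm{e}^{r\tau_{x+z}},\tau_{x+z}<\infty]$ by inserting/deleting at most $B-1$ unit down-steps exactly as in the proof of (\ref{isilam}), which each time costs a factor $(\delta\mathrm{e}^r)^{\pm(B-1)}$. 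Since $\delta\mathrm{e}^r \le 1$ for $r < r_c$, all the ``good-direction'' factors are harmless and only the reciprocals matter, so the final two-sided bound holds for $\mathbb{P}$-a.e.\ $\omega$ uniformly in the allowed range of $x$.
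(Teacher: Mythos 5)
Your plan works, and essentially coincides with the paper's, only in the bulk regime $0<x\le n-B$: there the first-passage decomposition at level $x$, divided through by $E_o^\omega[\mathrm{e}^{r\tau_x},\tau_x<\infty]\,E_o^\omega[\mathrm{e}^{r\tau_n},\tau_n<\infty]$, exhibits $\bigl(u_{r,n}(\omega,x)E_o^\omega[\mathrm{e}^{r\tau_x},\tau_x<\infty]\bigr)^{-1}$ as a convex combination of the ratios $u_{r,n}(\omega,x+z)/u_{r,n}(\omega,x)$, $0\le z<B$, and (\ref{isilam}) pins each ratio, giving the bound with exponent $B-1$ (this is (\ref{deyyus})); no separate control of the overshoot weights is needed, so that part of your discussion is beside the point. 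The genuine gap is that your argument does not cover the boundary regimes $n-B<x<n$ and $n\le x<n+B$, which are precisely where the exponent $4(B-1)$ comes from. If $n-B<x<n$, the overshoot point $x+z$ can satisfy $x+z\ge n$, and then $u_{r,n}(\omega,x+z)/u_{r,n}(\omega,x)=1/E_x^\omega[\mathrm{e}^{r\tau_n},\tau_n<\infty]$ is not controlled by (\ref{isilam}) (which is proved only for indices below $n$): ellipticity gives a lower bound on $E_x^\omega[\mathrm{e}^{r\tau_n},\tau_n<\infty]$ by inserting unit steps, but when $r>0$ no upper bound follows this way, and an upper bound is exactly what is needed. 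The paper obtains two-sided control (\ref{ruyalarda}) by embedding into a longer horizon $m\ge n+B$, expressing $1/E_x^\omega[\mathrm{e}^{r\tau_n},\tau_n<\infty]$ through $u_{r,m}(\omega,n)/u_{r,m}(\omega,x)$ and invoking the already-proved bulk bound (\ref{deyyus}) at horizon $m$; nothing in your outline plays this role.

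Moreover, for $n\le x<n+B$ (a range the lemma explicitly includes) your opening step is wrong: $x<n+B$ only gives $x-B<n$, not $x-1<n$, and the decomposition of $E_o^\omega[\mathrm{e}^{r\tau_n},\tau_n<\infty]$ at the first passage of level $x$ is invalid because the walk reaches level $n$ before level $x$. The paper instead writes, as in (\ref{goncayla}), $u_{r,n}(\omega,x)E_o^\omega[\mathrm{e}^{r\tau_x},\tau_x<\infty]=E_n^\omega[\mathrm{e}^{r\tau_x},\tau_x<\infty]\big/\bigl(u_{r,x}(\omega,n)E_o^\omega[\mathrm{e}^{r\tau_n},\tau_n<\infty]\bigr)$ and applies the previous two cases with the roles of $x$ and $n$ exchanged; the two factors of $2(B-1)$ coming from (\ref{ruyalarda}) and (\ref{deyyusiki}) are what produce $4(B-1)$. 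Without these two boundary arguments your proof establishes the lemma only for $0<x\le n-B$.
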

\begin{proof}
Suppose $0<x\leq n-B$. Observe that
\begin{align}
u_{r,n}(\omega,x)^{-1}&=\frac{E_o^\omega\left[\mathrm{e}^{r\tau_n},\tau_n<\infty\right]}{E_x^\omega\left[\mathrm{e}^{r\tau_n},\tau_n<\infty\right]}=\frac{1}{E_x^\omega\left[\mathrm{e}^{r\tau_n},\tau_n<\infty\right]}\sum_{z=0}^{B-1}E_o^\omega\left[\mathrm{e}^{r\tau_n},\tau_n<\infty,X_{\tau_x}=x+z\right]\nonumber\\
&=\sum_{z=0}^{B-1}E_o^\omega\left[\mathrm{e}^{r\tau_x},\tau_x<\infty,X_{\tau_x}=x+z\right]\frac{E_{x+z}^\omega\left[\mathrm{e}^{r\tau_n},\tau_n<\infty\right]}{E_x^\omega\left[\mathrm{e}^{r\tau_n},\tau_n<\infty\right]}\nonumber\\
&=\sum_{z=0}^{B-1}E_o^\omega\left[\mathrm{e}^{r\tau_x},\tau_x<\infty,X_{\tau_x}=x+z\right]\frac{u_{r,n}(\omega,x+z)}{u_{r,n}(\omega,x)}.\label{ucuzkurtuldum}
\end{align}
It follows immediately from (\ref{isilam}) that
\begin{equation}\label{deyyus}
(\delta\mathrm{e}^r)^{(B-1)}\leq u_{r,n}(\omega,x)E_o^\omega\left[\mathrm{e}^{r\tau_x},\tau_x<\infty\right]\leq(\delta\mathrm{e}^r)^{-(B-1)}.
\end{equation}

Next, suppose $n-B<x<n$. Note that (\ref{ucuzkurtuldum}) still holds. If $x+z<n$, then
\begin{equation}\label{hizlanmakiyi}
(\delta\mathrm{e}^r)^{(B-1)}\leq\frac{u_{r,n}(\omega,x+z)}{u_{r,n}(\omega,x)}\leq(\delta\mathrm{e}^r)^{-(B-1)}
\end{equation} again by (\ref{isilam}). On the other hand, if $x+z\geq n$, then 
\begin{equation}\label{havvaanan}
\frac{u_{r,n}(\omega,x+z)}{u_{r,n}(\omega,x)}=\frac{E_{x+z}^\omega\left[\mathrm{e}^{r\tau_n},\tau_n<\infty\right]}{E_x^\omega\left[\mathrm{e}^{r\tau_n},\tau_n<\infty\right]}=\frac{1}{E_x^\omega\left[\mathrm{e}^{r\tau_n},\tau_n<\infty\right]}.
\end{equation} However, for any $m\geq n+B$, 
\begin{equation}\label{ruyalarda}
(\delta\mathrm{e}^r)^{2(B-1)}\leq(\delta\mathrm{e}^r)^{(B-1)}\frac{u_{r,m}(\omega,n)}{u_{r,m}(\omega,x)}\leq\frac{1}{E_x^\omega\left[\mathrm{e}^{r\tau_n},\tau_n<\infty\right]}\leq(\delta\mathrm{e}^r)^{-(B-1)}\frac{u_{r,m}(\omega,n)}{u_{r,m}(\omega,x)}\leq(\delta\mathrm{e}^r)^{-2(B-1)}.
\end{equation} In (\ref{ruyalarda}), the inner inequalities follow from (\ref{deyyus}) after an appropriate change of variables, and the outer inequalities hold by (\ref{isilam}). Use (\ref{ucuzkurtuldum}) in combination with (\ref{hizlanmakiyi}), (\ref{havvaanan}) and (\ref{ruyalarda}) to deduce that
\begin{equation}\label{deyyusiki}
(\delta\mathrm{e}^r)^{2(B-1)}\leq u_{r,n}(\omega,x)E_o^\omega\left[\mathrm{e}^{r\tau_x},\tau_x<\infty\right]\leq(\delta\mathrm{e}^r)^{-2(B-1)}.
\end{equation}

If $x=n$, there is nothing to prove. Finally, suppose $n<x<n+B$. It is easy to see that
\begin{equation}\label{goncayla}
u_{r,n}(\omega,x)E_o^\omega\left[\mathrm{e}^{r\tau_x},\tau_x<\infty\right]=\frac{E_o^\omega\left[\mathrm{e}^{r\tau_x},\tau_x<\infty\right]}{E_o^\omega\left[\mathrm{e}^{r\tau_n},\tau_n<\infty\right]}=\frac{1}{u_{r,x}(\omega,n)E_o^\omega\left[\mathrm{e}^{r\tau_n},\tau_n<\infty\right]}\cdot E_n^\omega\left[\mathrm{e}^{r\tau_x},\tau_x<\infty\right].
\end{equation} Reversing the roles of $x$ and $n$ in both (\ref{ruyalarda}) and (\ref{deyyusiki}) gives upper and lower bounds for the terms on the RHS of (\ref{goncayla}). This implies the desired result.
\end{proof}

In order to indicate the $\omega$-dependence of $\hat{\pi}_{r,n}$, the probability measure it induces on paths starting at any $x<n$ is denoted by $Q_x^{n,\omega}$.

\begin{lemma}\label{ewayak}
For every $x<n$ and $\mathbb{P}$-a.e.\ $\omega$, $Q_x^{n,\omega}(\tau_n<\infty)=1$.
\end{lemma}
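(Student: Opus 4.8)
The plan is to show that under the tilted kernel $\hat{\pi}_{r,n}$, the walk started at $x<n$ reaches level $n$ almost surely. The natural strategy is to exhibit a nonnegative function that is (super)harmonic for $\hat{\pi}_{r,n}$ on $\{y<n\}$ and that forces the walk to the right. The obvious candidate, in view of the definition \eqref{huseyindayi}, is $u_{r,n}(\omega,\cdot)$ itself: by construction $\sum_{z\in\mathcal{R}}\hat{\pi}_{r,n}(y,y+z)u_{r,n}(\omega,y+z)=u_{r,n}(\omega,y)$ for each $y<n$, so $u_{r,n}(\omega,\cdot)$ is exactly $\hat{\pi}_{r,n}$-harmonic on $\{y<n\}$. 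Thus $\left(u_{r,n}(\omega,X_{k\wedge\tau_n})\right)_{k\geq0}$ is a nonnegative $Q_x^{n,\omega}$-martingale, hence converges $Q_x^{n,\omega}$-a.s.

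First I would use the ellipticity bound \eqref{reginbogin} together with a standard Borel--Cantelli / irreducibility argument to rule out the event $\{\tau_n=\infty\}\cap\{X_k\text{ stays in a bounded region}\}$: under \eqref{reginbogin} each site $y<n-1$ has $\hat{\pi}_{r,n}(y,y\pm1)\geq(\delta\mathrm{e}^r)^2>0$, so from any bounded interval the walk exits to the right within a geometrically distributed number of steps with probability one (the walk cannot be absorbed, and it cannot oscillate forever in a finite window). Hence, on $\{\tau_n=\infty\}$, we necessarily have $X_k\to-\infty$ as $k\to\infty$. So it remains to show $Q_x^{n,\omega}\left(X_k\to-\infty\right)=0$.

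Here is where the martingale and Lemma \ref{muhacir} enter. On the event $\{X_k\to-\infty\}$ the martingale $u_{r,n}(\omega,X_{k})$ converges to a finite limit, say $\ell\geq0$. But Lemma \ref{muhacir} — applied with $m$ the level $n$, with $L=\hat{\pi}_{r,n}$ (whose $\pm1$-transition probabilities are bounded below by $\epsilon:=(\delta\mathrm{e}^r)^2$ by \eqref{reginbogin}), and with the harmonic function $v=u_{r,n}(\omega,\cdot)$ — gives, for $x'<x<n$,
\[
\left|u_{r,n}(\omega,x)-u_{r,n}(\omega,x')\right|\leq\left(1-\epsilon^{B}\right)^{\frac{n-x}{B}}\sup_{0\leq z<B,\,0\leq z'<B}\left[u_{r,n}(\omega,n+z)-u_{r,n}(\omega,n+z')\right].
\]
Wait — Lemma \ref{muhacir} requires $Q_x(\tau_m<\infty)=1$, which is the very thing being proved, so I instead run the estimate inside a finite window: for $M<x$, restrict to the stopped chain and compare $u_{r,n}(\omega,\cdot)$ at two sites using the martingale optional-stopping identity between exit times of $[M,n)$. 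The point is that $u_{r,n}(\omega,y)$ cannot tend to a constant as $y\to-\infty$ unless all oscillations vanish, which combined with the uniform two-sided control \eqref{isilam} — namely $u_{r,n}(\omega,y)\asymp(\delta\mathrm{e}^r)^{-|y|}\to\infty$ as $y\to-\infty$ since $\delta\mathrm{e}^r\leq1$ — forces $u_{r,n}(\omega,X_k)\to\infty$ on $\{X_k\to-\infty\}$, contradicting the a.s.\ finiteness of the martingale limit. Therefore $Q_x^{n,\omega}(X_k\to-\infty)=0$, and combined with the previous paragraph, $Q_x^{n,\omega}(\tau_n<\infty)=1$.

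The main obstacle is the last step: turning "the harmonic function blows up going left while the martingale must converge" into a rigorous contradiction without circularly invoking $\tau_n<\infty$. The clean way is: on $\{\tau_n=\infty\}$, $\liminf_k X_k=-\infty$ (by the ellipticity argument, the walk is not eventually trapped), so $\limsup_k u_{r,n}(\omega,X_k)\geq\limsup_k(\delta\mathrm{e}^r)^{-|X_k|}\cdot u_{r,n}(\omega,0)^{\pm1}$-type lower bound from \eqref{isilam} is $+\infty$; but a nonnegative martingale converges a.s.\ to a finite limit, so this event has probability zero. I would also note the argument is uniform enough to hand down to Lemma \ref{muhacir}'s hypothesis for later use, since \eqref{reginbogin} gives the required uniform ellipticity of $\hat{\pi}_{r,n}$ on $\{y<n-1\}$.
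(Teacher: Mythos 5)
Your proposal takes a genuinely different (and in fact unworkable) route from the paper's. The paper's proof is a one-line direct computation using the Radon--Nikodym density you already wrote down: since
\[
Q_x^{n,\omega}(X_1=x_1,\ldots,X_k=x_k)=P_x^\omega(X_1=x_1,\ldots,X_k=x_k)\,\mathrm{e}^{rk}\,\frac{u_{r,n}(\omega,x_k)}{u_{r,n}(\omega,x)},
\]
summing over paths gives
\[
Q_x^{n,\omega}(\tau_n<\infty)=E_x^\omega\!\left[\mathrm{e}^{r\tau_n}\frac{u_{r,n}(\omega,X_{\tau_n})}{u_{r,n}(\omega,x)},\tau_n<\infty\right],
\]
and since $X_{\tau_n}\geq n$ forces $u_{r,n}(\omega,X_{\tau_n})=E_o^\omega[\mathrm{e}^{r\tau_n},\tau_n<\infty]^{-1}$, the expression collapses to $E_x^\omega[\mathrm{e}^{r\tau_n},\tau_n<\infty]/E_x^\omega[\mathrm{e}^{r\tau_n},\tau_n<\infty]=1$. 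No martingale argument is needed.

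Your proposed argument has a genuine gap at the decisive step. You assert that (\ref{isilam}) gives $u_{r,n}(\omega,y)\asymp(\delta\mathrm{e}^r)^{-|y|}\to\infty$ as $y\to-\infty$, but (\ref{isilam}) is a two-sided bound $(\delta\mathrm{e}^r)^{|y|}\leq u_{r,n}(\omega,y)/u_{r,n}(\omega,0)\leq(\delta\mathrm{e}^r)^{-|y|}$, and since $\delta\mathrm{e}^r\leq 1$ the \emph{lower} bound tends to $0$; the blow-up you need is the upper bound, which says nothing. In fact the opposite behavior is typical: for a homogeneous walk with drift to the left and $r\leq 0$, $u_{r,n}(\omega,y)=\alpha^{-y}$ with $\alpha=E_0[\mathrm{e}^{r\tau_1},\tau_1<\infty]<1$, so $u_{r,n}(\omega,y)\to 0$ as $y\to-\infty$. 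This is exactly how a Doob $h$-transform works: the harmonic function vanishes in the direction you are conditioning the walk \emph{not} to escape. Consequently, on the event $\{\tau_n=\infty,\,X_k\to-\infty\}$, the nonnegative (local super)martingale $u_{r,n}(\omega,X_{k\wedge\tau_n})$ may simply converge to $0$, and almost-sure convergence of a nonnegative martingale gives no contradiction. So even after fixing the misread inequality, the martingale route does not close; you would need an extra ingredient (optional stopping with uniform integrability, or a direct computation as in the paper) to rule out mass escaping to $-\infty$. You correctly spotted that invoking Lemma \ref{muhacir} here would be circular; the same care should have flagged that the martingale-convergence step needs more than (\ref{isilam}).
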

\begin{proof}
For any path $(x_j)_{0\leq j\leq k}$ with $x_o=x$, $x_j<n$ and $x_{j+1}-x_j\in\mathcal{R}$, it follows from (\ref{huseyindayi}) that $$Q_x^{n,\omega}(X_1 = x_1,\ldots, X_k = x_k)=P_x^\omega(X_1 = x_1,\ldots, X_k = x_k)\mathrm{e}^{rk}\frac{u_{r,n}(\omega,x_k)}{u_{r,n}(\omega,x)}.$$
Also, note that $P_x^\omega\left(\left.X_{\tau_n}\geq n\,\right|\tau_n<\infty\right)=1$. Therefore,
$$Q_x^{n,\omega}(\tau_n<\infty)=E_x^\omega[\mathrm{e}^{r\tau_n}\frac{u_{r,n}(\omega,X_{\tau_n})}{u_{r,n}(\omega,x)},\tau_n<\infty]=E_x^\omega[\mathrm{e}^{r\tau_n}\frac{E_{X_{\tau_n}}^\omega\left[\mathrm{e}^{r\tau_n},\tau_n<\infty\right]}{E_x^\omega\left[\mathrm{e}^{r\tau_n},\tau_n<\infty\right]},\tau_n<\infty] = 1.\qedhere$$
\end{proof}

\begin{lemma}
For every $x,z\in\mathbb{Z}$ and $\mathbb{P}$-a.e.\ $\omega$, $u_r(\omega,x):=\lim_{n\to\infty}u_{r,n}(\omega,x)$ exists and
\begin{equation}\label{cangorecek}
(\delta\mathrm{e}^r)^{|z|}\leq u_r(T_x\omega,z)=\frac{u_r(\omega,x+z)}{u_r(\omega,x)}\leq(\delta\mathrm{e}^r)^{-|z|}.
\end{equation}
\end{lemma}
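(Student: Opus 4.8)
The plan is to establish the convergence of $u_{r,n}(\omega,x)$ as $n\to\infty$ by identifying it as a Cauchy sequence, using Lemma \ref{muhacir} as the main tool, and then to obtain \eqref{cangorecek} by passing to the limit in the bounds already proved. First I would fix $\omega$ in the full-measure set on which all the preceding lemmas hold, and fix $x,z\in\mathbb{Z}$. Since all of the relevant quantities are defined for $x<n$, it is enough to prove convergence of $u_{r,n}(\omega,x)$ for $x$ in any half-line $(-\infty,m)$; the identity $u_{r,n}(\omega,y)=u_{r,n}(\omega,x)\cdot\frac{u_{r,n}(\omega,y)}{u_{r,n}(\omega,x)}$ together with \eqref{isilam} (which gives $n$-uniform two-sided bounds on the ratios) then propagates convergence at one site to convergence at all sites, and in particular to $x$, $x+z$ and $0$.

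The core step is to compare $u_{r,n}$ and $u_{r,n'}$ for $n<n'$ via Lemma \ref{muhacir}. Set $v_{r,n}(\omega,x):=\log u_{r,n}(\omega,x)$, or work directly with differences; the point is that for $x<n$ the function $x\mapsto u_{r,n}(\omega,x)$ is harmonic for the kernel $\hat{\pi}_{r,n}$ by \eqref{huseyindayi}, and by \eqref{reginbogin} this kernel is uniformly elliptic with constant $\epsilon:=(\delta\mathrm{e}^r)^2$ on $(-\infty,n-1)$, while by Lemma \ref{ewayak} the induced walk reaches level $n$ almost surely. Thus Lemma \ref{muhacir}, applied with $m=n$ (or $m=n-B+1$ to stay safely in the elliptic region) to the harmonic function $v=u_{r,n'}/u_{r,n}$ — which is harmonic for $\hat{\pi}_{r,n}$ on $(-\infty,n-B)$ after suitable normalization, since a ratio of two $\hat\pi_{r,n}$-harmonic functions need not be harmonic, so in fact one should apply Lemma \ref{muhacir} separately to $u_{r,n}$ and to $u_{r,n'}$, both harmonic for their own kernels, and then compare the boundary data near level $n$. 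The boundary oscillation $\sup_{0\le z,z'<B}[u_{r,n'}(\omega,n+z)-u_{r,n'}(\omega,n+z')]$ is controlled: by Lemma \ref{Qgot} each $u_{r,n'}(\omega,n+z)$ is comparable (up to a factor $(\delta\mathrm{e}^r)^{\pm4(B-1)}$) to $E_o^\omega[\mathrm{e}^{r\tau_{n+z}},\tau_{n+z}<\infty]^{-1}$, and for the comparison we only need that this oscillation is bounded by a constant independent of $n'$ after dividing by $u_{r,n'}(\omega,n)$, which again follows from \eqref{isilam}. The geometric factor $(1-\epsilon^B)^{(n-x)/B}$ from \eqref{faikinbaci} then forces $|u_{r,n'}(\omega,x)-u_{r,n}(\omega,x)|\to0$ as $n,n'\to\infty$, establishing the Cauchy property; the limit $u_r(\omega,x)$ exists and is strictly positive since each $u_{r,n}(\omega,x)$ lies in the fixed interval $[(\delta\mathrm{e}^r)^{|x|},(\delta\mathrm{e}^r)^{-|x|}]$ by \eqref{isilam} with the normalization $u_{r,n}(\omega,0)=1$.

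Once convergence is in hand, the displayed inequality \eqref{cangorecek} is immediate: taking $n\to\infty$ in \eqref{isilam} (with $y=x+z$) gives $(\delta\mathrm{e}^r)^{|z|}\le u_r(\omega,x+z)/u_r(\omega,x)\le(\delta\mathrm{e}^r)^{-|z|}$, and the middle equality $u_r(T_x\omega,z)=u_r(\omega,x+z)/u_r(\omega,x)$ follows by passing to the limit in the analogous finite-$n$ identity, which in turn comes from the shift-covariance relation $E_{x}^\omega[\mathrm{e}^{r\tau_n},\tau_n<\infty]=E_o^{T_x\omega}[\mathrm{e}^{r\tau_{n-x}},\tau_{n-x}<\infty]$ between passage times in $\omega$ and in the shifted environment. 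The main obstacle I expect is handling the boundary data near level $n$ cleanly — specifically, making sure the oscillation that feeds into the right-hand side of \eqref{faikinbaci} is bounded uniformly in $n$ and $n'$ after the correct normalization, which is exactly where Lemma \ref{Qgot} (rather than just \eqref{isilam}) is needed, since near and just past level $n$ one must compare $u_{r,n}$-values with passage-time expectations that involve a different target level.
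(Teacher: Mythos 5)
Your plan is essentially the paper's plan, and the overall structure (use Lemma \ref{muhacir} to get a geometric contraction, control boundary data with Lemma \ref{Qgot}, pass to the limit in \eqref{isilam} for the inequalities, use shift covariance of passage times for the middle identity) is correct. However, you hit a genuine gap at the central step and then talk yourself out of the right idea.

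The key fact is that for $n_1<n_2$ the ratio $v(x):=u_{r,n_2}(\omega,x)/u_{r,n_1}(\omega,x)$ \emph{is} harmonic for the Markov kernel $\hat{\pi}_{r,n_1}$ on $x<n_1$. You correctly write this down, and then retract it on the grounds that ``a ratio of two $\hat{\pi}_{r,n}$-harmonic functions need not be harmonic.'' That objection is aimed at the wrong target: $u_{r,n_1}$ and $u_{r,n_2}$ are not $\hat{\pi}_{r,n_1}$-harmonic; they are both solutions of the \emph{same} sub-Markovian equation $u(x)=\sum_z\pi(x,x+z)\mathrm{e}^ru(x+z)$ on the overlap $x<n_1$, and $\hat{\pi}_{r,n_1}$ is by definition the Doob $h$-transform of $\pi(\cdot,\cdot)\mathrm{e}^r$ by $u_{r,n_1}$. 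A direct computation then gives
\begin{equation*}
\sum_{z\in\mathcal{R}}\hat{\pi}_{r,n_1}(x,x+z)\,\frac{u_{r,n_2}(\omega,x+z)}{u_{r,n_1}(\omega,x+z)}
=\frac{1}{u_{r,n_1}(\omega,x)}\sum_{z\in\mathcal{R}}\pi(x,x+z)\mathrm{e}^r\,u_{r,n_2}(\omega,x+z)
=\frac{u_{r,n_2}(\omega,x)}{u_{r,n_1}(\omega,x)},
\end{equation*}
so $v$ satisfies exactly the hypothesis of Lemma \ref{muhacir} for the kernel $\hat{\pi}_{r,n_1}$, with ellipticity constant $(\delta\mathrm{e}^r)^2$ by \eqref{reginbogin} and with $Q_x(\tau_{n_1}<\infty)=1$ by Lemma \ref{ewayak}. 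The fallback you then propose --- applying Lemma \ref{muhacir} separately to $u_{r,n}$ and $u_{r,n'}$, ``both harmonic for their own kernels'' --- does not go through: $u_{r,n}$ is not harmonic for the probability kernel $\hat{\pi}_{r,n}$ (the constant function $1$ is), and Lemma \ref{muhacir} is stated for functions harmonic for a probability transition kernel. Without the observation above there is no single kernel to which you can apply Lemma \ref{muhacir} to compare $u_{r,n_1}$ and $u_{r,n_2}$.

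Once you restore this observation, the rest of your plan matches the paper: Lemma \ref{Qgot}, applied to $u_{r,n_1}$ and $u_{r,n_2}$ at the sites $n_1\le y<n_1+B$, gives $v(y)\in[(\delta\mathrm{e}^r)^{8(B-1)},(\delta\mathrm{e}^r)^{-8(B-1)}]$, so the boundary oscillation in \eqref{faikinbaci} is bounded uniformly in $n_2$; since $u_{r,n}(\omega,0)=1$ for every $n$, the normalization $v(0)=1$ lets you conclude $|v(x)-1|\le c(r)^{n_1-|x|}(\delta\mathrm{e}^r)^{-8(B-1)}$, and combined with \eqref{isilam} at $y=0$ this yields the Cauchy property and the limit. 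Passing to the limit in \eqref{isilam} gives the outer inequalities in \eqref{cangorecek}, and the shift covariance identity $E_{x+z}^\omega[\mathrm{e}^{r\tau_n},\tau_n<\infty]/E_x^\omega[\mathrm{e}^{r\tau_n},\tau_n<\infty]=E_z^{T_x\omega}[\mathrm{e}^{r\tau_{n-x}},\tau_{n-x}<\infty]/E_o^{T_x\omega}[\mathrm{e}^{r\tau_{n-x}},\tau_{n-x}<\infty]$ gives the middle equality.
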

\begin{proof}
Given $x\in\mathbb{Z}$, take any $n_1, n_2\in\mathbb{N}$ such that $x<n_1<n_2$. For every $y\in\mathbb{Z}$ with $n_1\leq y<n_1+B$, Lemma \ref{Qgot} implies that $$(\delta\mathrm{e}^r)^{8(B-1)}\leq\frac{u_{r,n_2}(\omega,y)}{u_{r,n_1}(\omega,y)}\leq(\delta\mathrm{e}^r)^{-8(B-1)}.$$
Since $\hat{\pi}_{r,n_1}$ is defined in (\ref{huseyindayi}) via a Doob $h$-transform, it is not surprising that
\begin{align*}
\sum_{z\in\mathcal{R}}\hat{\pi}_{r,n_1}(x,x+z)\frac{u_{r,n_2}(\omega,x+z)}{u_{r,n_1}(\omega,x+z)}&=\sum_{z\in\mathcal{R}}\pi(x,x+z)\mathrm{e}^r\frac{u_{r,n_1}(\omega,x+z)}{u_{r,n_1}(\omega,x)}\frac{u_{r,n_2}(\omega,x+z)}{u_{r,n_1}(\omega,x+z)}\\&=\sum_{z\in\mathcal{R}}\pi(x,x+z)\mathrm{e}^r\frac{u_{r,n_2}(\omega,x+z)}{u_{r,n_1}(\omega,x)}\\&=\frac{u_{r,n_2}(\omega,x)}{u_{r,n_1}(\omega,x)}.
\end{align*}
Therefore, Lemma \ref{muhacir} implies that
$$\left|\frac{u_{r,n_2}(\omega,x)}{u_{r,n_1}(\omega,x)}-1\right| = \left|\frac{u_{r,n_2}(\omega,x)}{u_{r,n_1}(\omega,x)}-\frac{u_{r,n_2}(\omega,0)}{u_{r,n_1}(\omega,0)}\right|\leq c(r)^{n_1-|x|}\left(\delta\mathrm{e}^r\right)^{-8(B-1)}$$
where $c(r) := \left(1-\left(\delta\mathrm{e}^r\right)^{2B}\right)^{1/B}<1$. Substitute $y=0$ in (\ref{isilam}) and conclude that
$$\left|u_{r,n_2}(\omega,x)-u_{r,n_1}(\omega,x)\right|=u_{r,n_1}(\omega,x)\left|\frac{u_{r,n_2}(\omega,x)}{u_{r,n_1}(\omega,x)}-1\right|\leq c(r)^{n_1-|x|}\left(\delta\mathrm{e}^r\right)^{-8(B-1)-|x|}.$$ In particular, $\left(u_{r,n}(\omega,x)\right)_{n>x}$ is a Cauchy sequence. Therefore, $u_r(\omega,x):=\lim_{n\to\infty}u_{r,n}(\omega,x)$ exists.

For every $x,z\in\mathbb{Z}$ and $\mathbb{P}$-a.e.\ $\omega$,
\begin{align}
\frac{u_r(\omega,x+z)}{u_r(\omega,x)}&=\lim_{n\to\infty}\!\!\frac{u_{r,n}(\omega,x+z)}{u_{r,n}(\omega,x)}=\lim_{n\to\infty}\!\!\frac{E_{x+z}^\omega\left[\mathrm{e}^{r\tau_n},\tau_n<\infty\right]}{E_x^\omega\left[\mathrm{e}^{r\tau_n},\tau_n<\infty\right]}=\lim_{n\to\infty}\!\!\frac{E_z^{T_x\omega}\left[\mathrm{e}^{r\tau_{n-x}},\tau_{n-x}<\infty\right]}{E_o^{T_x\omega}\left[\mathrm{e}^{r\tau_{n-x}},\tau_{n-x}<\infty\right]}\label{feyziogluyla}\\&=u_r(T_x\omega,z).\nonumber
\end{align}
Finally, note that the inequalities in (\ref{cangorecek}) follow from (\ref{isilam}).
\end{proof}

\begin{definition}
For every $z\in\mathcal{R}$ and $\mathbb{P}$-a.e.\ $\omega$, let 
\begin{equation}\label{birincicinko}
\hat{\pi}_r(\omega,z):=\pi(0,z)\mathrm{e}^r u_r(\omega,z).
\end{equation}
\end{definition}

It follows immediately from (\ref{huseyindayi}) that $\hat{\pi}_r:\Omega\times\mathcal{R}\to[0,1]$ is an environment kernel in the sense of Definition \ref{ortamkeli}. For every $x\in\mathbb{Z}$, in order to simplify the notation, $P_x^{\hat{\pi}_r,\omega}, E_x^{\hat{\pi}_r,\omega}, P_x^{\hat{\pi}_r}$ and $E_x^{\hat{\pi}_r}$ are denoted by $P_x^{r,\omega}, E_x^{r,\omega}, P_x^r$ and $E_x^r$, respectively.
 
\begin{lemma} For every $z\in\mathcal{R}$ and $\mathbb{P}$-a.e.\ $\omega$, 
\begin{equation}\label{ikincicinko}
\hat{\pi}_r(\omega,z)\geq\left(\delta\mathrm{e}^r\right)^{|z|}\mathrm{e}^r\pi(0,z).
\end{equation} In particular, $\hat{\pi}_r$ satisfies the following ellipticity condition: 
\begin{equation}\label{tombala}
\mathbb{P}\left(\omega: \hat{\pi}_r(\omega,\pm1)\geq\left(\delta\mathrm{e}^r\right)^2\right)=1.
\end{equation}
\end{lemma}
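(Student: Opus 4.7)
The plan is to verify both inequalities as essentially immediate consequences of previously established facts: the lower bound on $u_r$ in \eqref{cangorecek} and, for the ellipticity statement, the uniform ellipticity assumption (A2). There is no real obstacle, since the heavy lifting (existence of $u_r$ and the pointwise bounds on its ratios) was already done before the statement of the lemma.

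To carry this out, I would first observe that $u_{r,n}(\omega,0) = 1$ by definition of $u_{r,n}$ (one has $E_o^\omega[e^{r\tau_n}, \tau_n < \infty]/E_o^\omega[e^{r\tau_n}, \tau_n < \infty] = 1$), so passing to the limit gives $u_r(\omega,0) = 1$ for $\mathbb{P}$-a.e.\ $\omega$. Specializing \eqref{cangorecek} to $x = 0$ then yields
\[
u_r(\omega,z) \;=\; \frac{u_r(\omega,z)}{u_r(\omega,0)} \;\geq\; (\delta e^r)^{|z|}
\]
for every $z \in \mathcal{R}$ and $\mathbb{P}$-a.e.\ $\omega$. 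Substituting this into the defining identity $\hat\pi_r(\omega,z) = \pi(0,z) e^r u_r(\omega,z)$ from \eqref{birincicinko} gives exactly \eqref{ikincicinko}.

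For the ellipticity bound \eqref{tombala}, I would specialize \eqref{ikincicinko} to $z = \pm 1$, so that $|z| = 1$, and invoke assumption (A2), namely $\pi(0, \pm 1) \geq \delta$ $\mathbb{P}$-a.s. The two bounds then combine to give
\[
\hat\pi_r(\omega,\pm 1) \;\geq\; \delta \cdot e^r \cdot (\delta e^r) \;=\; (\delta e^r)^2
\]
$\mathbb{P}$-a.s., which is the desired statement. The only thing to keep an eye on is that the null sets entering \eqref{cangorecek}, (A2), and $u_r(\omega,0)=1$ are countable (or their union can be made so), so that a single full-measure $\omega$-set works for all $z \in \mathcal R$ simultaneously; since $\mathcal R$ is finite, this is automatic.
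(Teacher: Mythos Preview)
Your proof is correct and follows exactly the same approach as the paper, which simply notes that \eqref{cangorecek} and \eqref{birincicinko} give \eqref{ikincicinko}, and then (A2) gives \eqref{tombala}. Your extra remark about $u_r(\omega,0)=1$ is fine but not strictly necessary, since the lower bound $(\delta e^r)^{|z|}\leq u_r(T_x\omega,z)$ is already stated directly in \eqref{cangorecek}.
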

 
\begin{proof}
(\ref{cangorecek}) and (\ref{birincicinko}) imply (\ref{ikincicinko}) which gives (\ref{tombala}) since (A2) holds.
\end{proof}
 
\begin{lemma}\label{kayahan}
For every $n\geq1$, $P_o^r(\tau_n<\infty)=1$.
\end{lemma}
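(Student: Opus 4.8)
The plan is to show that $P_o^{r,\omega}(\tau_n<\infty)=1$ for $\mathbb{P}$-a.e.\ $\omega$; the claim $P_o^r(\tau_n<\infty)=1$ then follows by integrating against $\mathbb{P}$, since $P_o^r=\mathbb{P}\times P_o^{r,\omega}$. This is the direct analogue of Lemma \ref{ewayak} for the limiting kernel $\hat{\pi}_r$, and the argument is essentially a computation.

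First I would record the path-level identity coming from (\ref{birincicinko}) and (\ref{cangorecek}): since $\hat{\pi}_r(T_{x}\omega,z)=\pi(x,x+z)\mathrm{e}^r u_r(\omega,x+z)/u_r(\omega,x)$ and $u_r(\omega,0)=\lim_n u_{r,n}(\omega,0)=1$, telescoping along any path $(x_j)_{0\leq j\leq k}$ with $x_0=0$ and $x_{j+1}-x_j\in\mathcal{R}$ gives
\[
P_o^{r,\omega}(X_1=x_1,\ldots,X_k=x_k)=\mathrm{e}^{rk}\,u_r(\omega,x_k)\,P_o^\omega(X_1=x_1,\ldots,X_k=x_k).
\]
The event $\{\tau_n=k\}\cap\{X_{\tau_n}=n+z\}$ depends only on $(X_0,\ldots,X_k)$, and on it $X_i<n$ for $i<k$ and $X_k=n+z$ with $0\leq z\leq B-1$; summing the identity above over the corresponding paths and then over $k\geq1$ and $z$ yields
\[
P_o^{r,\omega}(\tau_n<\infty)=\sum_{z=0}^{B-1}E_o^\omega\left[\mathrm{e}^{r\tau_n},\tau_n<\infty,X_{\tau_n}=n+z\right]u_r(\omega,n+z).
\]

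Next I would obtain the same formula with $u_{r,m}$ in place of $u_r$ for large $m$ and pass to the limit. For $m\geq n+B$, the bound $B$ on the jumps forces $\tau_n<\tau_m$ with $X_{\tau_n}\in\{n,\ldots,n+B-1\}$, so the strong Markov property at $\tau_n$ gives
\[
E_o^\omega\left[\mathrm{e}^{r\tau_m},\tau_m<\infty\right]=\sum_{z=0}^{B-1}E_o^\omega\left[\mathrm{e}^{r\tau_n},\tau_n<\infty,X_{\tau_n}=n+z\right]E_{n+z}^\omega\left[\mathrm{e}^{r\tau_m},\tau_m<\infty\right].
\]
Dividing by $E_o^\omega[\mathrm{e}^{r\tau_m},\tau_m<\infty]$ and recalling the definition of $u_{r,m}$ used in (\ref{huseyindayi}) turns this into
\[
1=\sum_{z=0}^{B-1}E_o^\omega\left[\mathrm{e}^{r\tau_n},\tau_n<\infty,X_{\tau_n}=n+z\right]u_{r,m}(\omega,n+z).
\]
Since $r<r_c$, each coefficient satisfies $E_o^\omega[\mathrm{e}^{r\tau_n},\tau_n<\infty,X_{\tau_n}=n+z]\leq E_o^\omega[\mathrm{e}^{r\tau_n},\tau_n<\infty]<\infty$ (finiteness was established at the start of Section \ref{dolapdere}), and $u_{r,m}(\omega,n+z)\to u_r(\omega,n+z)$ as $m\to\infty$; as this is a finite sum, letting $m\to\infty$ gives $1=\sum_{z=0}^{B-1}E_o^\omega[\mathrm{e}^{r\tau_n},\tau_n<\infty,X_{\tau_n}=n+z]u_r(\omega,n+z)=P_o^{r,\omega}(\tau_n<\infty)$ for $\mathbb{P}$-a.e.\ $\omega$, which is what was needed.

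I do not expect a genuine obstacle. The only points requiring a little care are the measurability bookkeeping that converts the path identity into the formula for $P_o^{r,\omega}(\tau_n<\infty)$ (noting that conditioning on $\{\tau_n=k,X_{\tau_n}=n+z\}$ loses nothing because the walk is automatically below $n$ before time $k$), and the interchange of $\lim_{m\to\infty}$ with the sum over $z$, which is trivial since the sum has only $B$ terms and the coefficients are finite.
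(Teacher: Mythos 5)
Your proposal is correct and follows essentially the same route as the paper: the change-of-measure path identity giving $P_o^{r,\omega}(\tau_n<\infty)=\sum_{z=0}^{B-1}E_o^\omega[\mathrm{e}^{r\tau_n},\tau_n<\infty,X_{\tau_n}=n+z]\,u_r(\omega,n+z)$, the strong Markov property at $\tau_n$, and passage to the limit $u_{r,m}\to u_r$ over the finite sum with finite coefficients. The paper merely arranges the same steps in the opposite order (starting from $P_o^{r,\omega}(\tau_n<\infty)$ and rewriting $u_r$ as $\lim_m u_{r,m}$), so there is no substantive difference.
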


\begin{proof}
Recall the proof of Lemma \ref{ewayak}. For every $n\geq1$ and $\mathbb{P}$-a.e.\ $\omega$,
\begin{align*}
P_o^{r,\omega}\left(\tau_n<\infty\right)&=E_o^\omega\left[\mathrm{e}^{r\tau_n}u_r(\omega,X_{\tau_n}), \tau_n<\infty\right]\\
&=\sum_{z=0}^{B-1}E_o^\omega\left[\mathrm{e}^{r\tau_n}, X_{\tau_n}=n+z, \tau_n<\infty\right]u_r(\omega,n+z)\\
&=\lim_{m\to\infty}\sum_{z=0}^{B-1}E_o^\omega\left[\mathrm{e}^{r\tau_n}, X_{\tau_n}=n+z, \tau_n<\infty\right]u_{r,m}(\omega,n+z)\\
&=\lim_{m\to\infty}E_o^\omega\left[\mathrm{e}^{r\tau_n}u_{r,m}(\omega,X_{\tau_n}), \tau_n<\infty\right]\\
&=\lim_{m\to\infty}\frac{E_o^\omega\left[\mathrm{e}^{r\tau_n}E_{X_{\tau_n}}^\omega\left[\mathrm{e}^{r\tau_m}, \tau_m<\infty\right], \tau_n<\infty\right]}{E_o^\omega\left[\mathrm{e}^{r\tau_m}, \tau_m<\infty\right]}\\
&=\lim_{m\to\infty}\frac{E_o^\omega\left[\mathrm{e}^{r\tau_m}, \tau_m<\infty\right]}{E_o^\omega\left[\mathrm{e}^{r\tau_m}, \tau_m<\infty\right]}=1.\qedhere
\end{align*}
\end{proof}

\begin{lemma}\label{mavisap}
For every $m\geq1$ and $\mathbb{P}$-a.e.\ $\omega$,
\begin{equation}\label{boundoyleolmazboyleolur}
E_o^{r,\omega}\left[\tau_1^m\right]\leq\frac{m!}{(r_c-r)^m}(\delta\mathrm{e}^r)^{-2B}=:H_m(r).
\end{equation}
\end{lemma}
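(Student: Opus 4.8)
The plan is to establish a uniform (in $\omega$) bound $E_o^{r,\omega}[\mathrm{e}^{s\tau_1}]\le(\delta\mathrm{e}^r)^{-2B}$ for every $s\in(0,r_c-r)$, and then combine it with the elementary inequality $x^m\le\frac{m!}{s^m}\mathrm{e}^{sx}$ (valid for $x\ge0$, $m\ge1$, $s>0$, since $\mathrm{e}^{sx}\ge(sx)^m/m!$) and optimize over $s$: indeed $E_o^{r,\omega}[\tau_1^m]\le\frac{m!}{s^m}E_o^{r,\omega}[\mathrm{e}^{s\tau_1}]\le\frac{m!}{s^m}(\delta\mathrm{e}^r)^{-2B}$ for all $s\in(0,r_c-r)$, and taking the infimum over $s$ yields precisely $H_m(r)$ as in (\ref{boundoyleolmazboyleolur}).

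The heart of the matter is a martingale coming from the Doob $h$-transform structure of $\hat\pi_r$. Fix $s\in(0,r_c-r)$. Since $r+s<r_c$, applying the lemma that proves (\ref{cangorecek}) with $r+s$ in place of $r$ shows that $u_{r+s}(\omega,x):=\lim_{n\to\infty}u_{r+s,n}(\omega,x)$ exists for all $x$, satisfies $(\delta\mathrm{e}^{r+s})^{|x|}\le u_{r+s}(\omega,x)\le(\delta\mathrm{e}^{r+s})^{-|x|}$ and, passing to the limit in (\ref{huseyindayi}), obeys $u_{r+s}(\omega,x)=\sum_{z\in\mathcal{R}}\pi(x,x+z)\mathrm{e}^{r+s}u_{r+s}(\omega,x+z)$ for every $x\in\mathbb{Z}$. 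I claim that under $P_o^{r,\omega}$,
\[M_k:=\mathrm{e}^{sk}\,\frac{u_{r+s}(\omega,X_k)}{u_r(\omega,X_k)}\]
is a positive martingale with $M_0=1$. This is a single-step computation: using $\hat\pi_r(T_x\omega,z)=\pi(x,x+z)\mathrm{e}^r\,u_r(\omega,x+z)/u_r(\omega,x)$ (which is (\ref{huseyindayi}) in the limit, together with (\ref{birincicinko}) and (\ref{cangorecek})) and then the displayed harmonicity relation for $u_{r+s}$, one gets
\[E_x^{r,\omega}\!\left[\frac{u_{r+s}(\omega,X_1)}{u_r(\omega,X_1)}\right]=\frac{\mathrm{e}^r}{u_r(\omega,x)}\sum_{z\in\mathcal{R}}\pi(x,x+z)u_{r+s}(\omega,x+z)=\mathrm{e}^{-s}\,\frac{u_{r+s}(\omega,x)}{u_r(\omega,x)},\]
and the martingale property follows from the Markov property.

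Next I would apply optional stopping at the bounded time $\tau_1\wedge k$ to get $E_o^{r,\omega}[M_{\tau_1\wedge k}]=1$, discard the nonnegative contribution of $\{\tau_1>k\}$, and let $k\to\infty$; since $P_o^{r,\omega}(\tau_1<\infty)=1$ by Lemma \ref{kayahan} (with $n=1$), monotone convergence gives
\[E_o^{r,\omega}\!\left[\mathrm{e}^{s\tau_1}\,\frac{u_{r+s}(\omega,X_{\tau_1})}{u_r(\omega,X_{\tau_1})}\right]\le1.\]
On $\{\tau_1<\infty\}$ one has $X_{\tau_1}\in\{1,\dots,B\}$ (because $X_{\tau_1-1}\le0$ and the jumps are bounded by $B$), and since $0<\delta\mathrm{e}^r<\delta\mathrm{e}^{r+s}<1$ (recall $r+s<r_c\le-\log\delta$), the bounds on $u_{r+s}$ and $u_r$ yield $u_{r+s}(\omega,X_{\tau_1})/u_r(\omega,X_{\tau_1})\ge(\delta\mathrm{e}^{r+s})^{B}(\delta\mathrm{e}^r)^{B}\ge(\delta\mathrm{e}^r)^{2B}$. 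Hence $E_o^{r,\omega}[\mathrm{e}^{s\tau_1}]\le(\delta\mathrm{e}^r)^{-2B}$ for every $s\in(0,r_c-r)$, and the proof concludes as in the first paragraph.

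The only genuinely non-routine step is spotting the martingale $\mathrm{e}^{sk}u_{r+s}(\omega,X_k)/u_r(\omega,X_k)$; once $u_{r+s}$ is available and one recognizes $\hat\pi_r$ as a Doob transform of the $\mathrm{e}^r$-weighted walk, this tilted quantity is essentially forced. The remaining care lies in the interchange of limits (optional stopping combined with monotone convergence, which relies on $P_o^{r,\omega}(\tau_1<\infty)=1$) and in checking that $X_{\tau_1}$ lands in $\{1,\dots,B\}$ so that the ellipticity estimates for $u_r$ and $u_{r+s}$ may be invoked at $X_{\tau_1}$.
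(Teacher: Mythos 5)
Your proof is correct. The martingale $M_k=\mathrm{e}^{sk}\,u_{r+s}(\omega,X_k)/u_r(\omega,X_k)$ is indeed a $P_o^{r,\omega}$-martingale (each $M_k$ is bounded since $|X_k|\leq kB$, so optional stopping at $\tau_1\wedge k$ is legitimate), the lower bound on the ratio at $X_{\tau_1}\in\{1,\dots,B\}$ is right, and the infimum over $s\in(0,r_c-r)$ recovers exactly $H_m(r)$. Your route differs from the paper's, which is shorter: the paper first observes that (\ref{isilam}) and (\ref{deyyus}) give $E_o^\omega[\mathrm{e}^{r\tau_1},\tau_1<\infty]\leq(\delta\mathrm{e}^r)^{-B}$ for all $r<r_c$, hence also at $r=r_c$ by monotone convergence; then, instead of a martingale, it uses the change-of-measure identity underlying Lemma \ref{kayahan} directly at the endpoint $s=r_c-r$, namely $E_o^{r,\omega}\left[\mathrm{e}^{(r_c-r)\tau_1}\right]=E_o^\omega\left[\mathrm{e}^{r_c\tau_1}u_r(\omega,X_{\tau_1}),\tau_1<\infty\right]\leq(\delta\mathrm{e}^r)^{-B}(\delta\mathrm{e}^{r_c})^{-B}\leq(\delta\mathrm{e}^r)^{-2B}$, bounding $u_r(\omega,X_{\tau_1})$ by (\ref{cangorecek}); the elementary inequality $\mathrm{e}^a\geq a^m/m!$ with $a=(r_c-r)\tau_1$ then finishes. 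So the paper needs no auxiliary harmonic function $u_{r+s}$, no optional stopping, and no limit in $s$, but it does evaluate the untilted exponential moment at the critical value $r_c$ (via monotone convergence); your argument stays strictly below $r_c$, at the price of the extra martingale machinery and the final optimization, and arrives at the same constant $(\delta\mathrm{e}^r)^{-2B}$.
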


\begin{proof}
It follows from (\ref{isilam}) and (\ref{deyyus}) that $E_o^\omega\left[\mathrm{e}^{r\tau_1},\tau_1<\infty\right]\leq(\delta\mathrm{e}^r)^{-B}$ for $\mathbb{P}$-a.e.\ $\omega$. By the monotone convergence theorem, this bound holds for $r=r_c$ as well. Note that
\begin{align*}
E_o^{r,\omega}\left[\mathrm{e}^{(r_c-r)\tau_1}\right]&=E_o^{r,\omega}\left[\mathrm{e}^{(r_c-r)\tau_1},\tau_1<\infty\right]=E_o^\omega\left[\mathrm{e}^{r_c\tau_1}u_r(\omega,X_{\tau_1}),\tau_1<\infty\right]\\
&\leq E_o^\omega\left[\mathrm{e}^{r_c\tau_1}(\delta\mathrm{e}^r)^{-B},\tau_1<\infty\right]\leq (\delta\mathrm{e}^r)^{-B}(\delta\mathrm{e}^{r_c})^{-B}\leq(\delta\mathrm{e}^r)^{-2B}.
\end{align*} Here, Lemma \ref{kayahan} and (\ref{cangorecek}) imply the first equality and the first inequality, respectively. For every $m\geq1$ and $a\in\mathbb{R}^+$, $\mathrm{e}^{a}=\sum_{n=0}^\infty\frac{a^n}{n!}\geq\frac{a^m}{m!}$. Therefore,
$$\frac{(r_c-r)^m}{m!}E_o^{r,\omega}\left[\tau_1^m\right]\leq E_o^{r,\omega}\left[\mathrm{e}^{(r_c-r)\tau_1}\right]\leq(\delta\mathrm{e}^r)^{-2B}.\qedhere$$
\end{proof}

\begin{lemma}\label{bugeceler}
For $\mathbb{P}$-a.e.\ $\omega$,
$$\lim_{n\to\infty}\frac{1}{n}E_o^{r,\omega}\left[\tau_n\right]=\mathbb{E}\left[\lim_{x\to-\infty}P_x^{r,\omega}\left(X_{\tau_o}=0\right)E_o^{r,\omega}\left[\tau_1\right]\right]=:g(r).$$%\quad\mbox{where}\quad V_r(\omega):=\lim_{x\to-\infty}P_x^{r,\omega}\left(X_{\tau_o}=0\right).$$
\end{lemma}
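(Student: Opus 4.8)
The plan is to split $\tau_n=\sum_{k=0}^{n-1}(\tau_{k+1}-\tau_k)$ into the successive right-passage increments, identify the conditional mean of the $k$-th increment, replace the ``clean record'' probability $P_o^{r,\omega}(X_{\tau_k}=k)$ by a $T_k$-shift of a single deterministic functional, and conclude by the ergodic theorem. Fix $r<r_c$ and, for $\mathbb P$-a.e.\ $\omega$, put $v_\omega(y):=P_y^{r,\omega}(X_{\tau_o}=0)$ for all $y\in\mathbb Z$; then $v_\omega(0)=1$, $v_\omega(j)=0$ for $j\ge1$, and a first-step decomposition gives $v_\omega(y)=\sum_{z\in\mathcal R}\hat\pi_r(T_y\omega,z)\,v_\omega(y+z)$ for every $y<0$, so $v_\omega$ is $\hat\pi_r$-harmonic on $(-\infty,0)\cap\mathbb Z$. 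Since $P_y^{r,\omega}(\tau_o<\infty)=1$ for each $y<0$ (Lemma \ref{kayahan} after a shift) and $\hat\pi_r(\cdot,\pm1)\ge(\delta\mathrm{e}^r)^2$ by (\ref{tombala}), Lemma \ref{muhacir} applied with $m=0$, $L(y,\cdot)=\hat\pi_r(T_y\omega,\cdot)$, $\epsilon=(\delta\mathrm{e}^r)^2$ and boundary oscillation $\le1$ yields
\[
\bigl|v_\omega(x)-v_\omega(x')\bigr|\le\bigl(1-(\delta\mathrm{e}^r)^{2B}\bigr)^{-x/B}\qquad\text{for all }x'<x<0.
\]
In particular $(v_\omega(x))_{x<0}$ is Cauchy, so $\ell(r,\omega):=\lim_{x\to-\infty}P_x^{r,\omega}(X_{\tau_o}=0)$ exists, is measurable, and lies in $[0,1]$. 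Moreover $(X_m-k)_{m\ge0}$ under $P_o^{r,\omega}$ has the law of $(X_m)_{m\ge0}$ under $P_{-k}^{r,T_k\omega}$, whence $P_o^{r,\omega}(X_{\tau_k}=k)=v_{T_k\omega}(-k)$; applying the display above to $T_k\omega$ with $x=-k$ and letting $x'\to-\infty$ gives
\[
\bigl|P_o^{r,\omega}(X_{\tau_k}=k)-\ell(r,T_k\omega)\bigr|\le\bigl(1-(\delta\mathrm{e}^r)^{2B}\bigr)^{k/B}=:\rho_k
\]
for every $k\ge1$ and $\mathbb P$-a.e.\ $\omega$, with $\rho_k\to0$.

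For the decomposition, note that for $k\ge1$ the bounded-jump walk must reach level $k$ before level $k+1$, so $\tau_{k+1}\ge\tau_k$, and $P_o^{r,\omega}(\tau_k<\infty)=1$ by Lemma \ref{kayahan}. On $\{X_{\tau_k}=k+j\}$ with $1\le j\le B-1$ one has $\tau_{k+1}=\tau_k$, while on $\{X_{\tau_k}=k\}$ the strong Markov property at $\tau_k$ shows that $\tau_{k+1}-\tau_k$ has, conditionally on $\mathcal F_{\tau_k}$, the law of $\tau_1$ under $P_o^{r,T_k\omega}$; using Lemma \ref{mavisap} with $m=1$ for integrability, $E_o^{r,\omega}[\tau_{k+1}-\tau_k\mid\mathcal F_{\tau_k}]=\one_{\{X_{\tau_k}=k\}}\,E_o^{r,T_k\omega}[\tau_1]$. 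Summing over $k$ and taking expectations (recall $\tau_o=0$, and all terms are nonnegative),
\[
E_o^{r,\omega}[\tau_n]=\sum_{k=0}^{n-1}P_o^{r,\omega}(X_{\tau_k}=k)\,E_o^{r,T_k\omega}[\tau_1].
\]

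Finally, by the second display above together with the bound $E_o^{r,T_k\omega}[\tau_1]\le H_1(r)$, valid for all $k$ and $\mathbb P$-a.e.\ $\omega$ (Lemma \ref{mavisap}),
\[
\left|\frac1nE_o^{r,\omega}[\tau_n]-\frac1n\sum_{k=0}^{n-1}\ell(r,T_k\omega)\,E_o^{r,T_k\omega}[\tau_1]\right|\le\frac{H_1(r)}{n}\sum_{k=0}^{n-1}\rho_k\longrightarrow0.
\]
The map $\omega\mapsto\ell(r,\omega)\,E_o^{r,\omega}[\tau_1]$ is nonnegative, bounded by $H_1(r)$, hence in $L^1(\mathbb P)$; since $\mathbb P$ is stationary and ergodic under $(T_z)_{z\in\mathbb Z}$, the pointwise ergodic theorem gives $\frac1n\sum_{k=0}^{n-1}\ell(r,T_k\omega)\,E_o^{r,T_k\omega}[\tau_1]\to\mathbb E\bigl[\ell(r,\omega)\,E_o^{r,\omega}[\tau_1]\bigr]=g(r)$ for $\mathbb P$-a.e.\ $\omega$. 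Combining the last two displays gives $\frac1nE_o^{r,\omega}[\tau_n]\to g(r)$.

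The main obstacle is the uniform-in-$k$ stabilization $P_o^{r,\omega}(X_{\tau_k}=k)\to\ell(r,T_k\omega)$: a priori this probability depends on the whole stretch of environment between $0$ and $k$, and it is precisely the geometric contraction of Lemma \ref{muhacir}, fed by the ellipticity (\ref{tombala}) of the tilted kernel $\hat\pi_r$, that forces it to collapse onto a $T_k$-shift of the single functional $\ell(r,\cdot)$ at a rate $\rho_k$ independent of $\omega$. Everything else --- the overshoot decomposition and the Ces\`aro/ergodic-theorem limit --- is routine.
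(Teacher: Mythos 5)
Your proof is correct and follows essentially the same route as the paper: decompose $E_o^{r,\omega}[\tau_n]$ into the increments $\tau_{k+1}-\tau_k$ via the strong Markov property on $\{X_{\tau_k}=k\}$, use Lemma \ref{muhacir} (with the ellipticity bound (\ref{tombala}) and Lemma \ref{kayahan}) to replace $P_o^{r,\omega}(X_{\tau_k}=k)$ by the shifted limit $\lim_{x\to-\infty}P_x^{r,T_k\omega}(X_{\tau_o}=0)$ up to a geometric error, control everything by $H_1(r)$ from Lemma \ref{mavisap}, and finish with the ergodic theorem. The only cosmetic difference is that you handle both directions at once through a two-sided error bound, whereas the paper proves the upper bound and leaves the lower bound as ``similar.''
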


\begin{proof}
Let $c(r) := \left(1-\left(\delta\mathrm{e}^r\right)^{2B}\right)^{1/B}<1$. For every $n\geq1$ and $\mathbb{P}$-a.e.\ $\omega$,
\begin{align}
E_o^{r,\omega}\left[\tau_n\right] &= \sum_{i=1}^nE_o^{r,\omega}\left[\tau_i - \tau_{i-1}\right] = \sum_{i=1}^nE_o^{r,\omega}\left[\tau_i - \tau_{i-1}, X_{\tau_{i-1}}=i-1\right]\nonumber\\
& = \sum_{i=1}^nP_o^{r,\omega}\left(X_{\tau_{i-1}}=i-1\right)E_{i-1}^{r,\omega}\left[\tau_i\right]\nonumber\\
& \leq \sum_{i=1}^n\left(\lim_{x\to-\infty}P_x^{r,\omega}\left(X_{\tau_{i-1}}=i-1\right) + c(r)^{i-1}\right)E_{i-1}^{r,\omega}\left[\tau_i\right]\label{bengurion}\\
& = \sum_{i=1}^n\left(\lim_{x\to-\infty}P_x^{r,T_{i-1}\omega}\left(X_{\tau_o}=0\right) + c(r)^{i-1}\right)E_o^{r,T_{i-1}\omega}\left[\tau_1\right]\nonumber\\
%& = \sum_{i=1}^n\left(V_r\left(T_{i-1}\omega\right) + c(r)^{i-1}\right)E_o^{r,T_{i-1}\omega}\left[\tau_1\right]\nonumber\\
& \leq \frac{H_1(r)}{1-c(r)} + \sum_{i=1}^n\lim_{x\to-\infty}P_x^{r,T_{i-1}\omega}\left(X_{\tau_o}=0\right)E_o^{r,T_{i-1}\omega}\left[\tau_1\right]\nonumber
\end{align} where (\ref{bengurion}) and the existence of $\lim_{x\to-\infty}P_x^{r,\omega}\left(X_{\tau_{i-1}}=i-1\right)$ follow from Lemma \ref{muhacir}. Therefore,
$$\limsup_{n\to\infty}\frac{1}{n}E_o^{r,\omega}\left[\tau_n\right]\leq\mathbb{E}\left[\lim_{x\to-\infty}P_x^{r,\omega}\left(X_{\tau_o}=0\right)E_o^{r,\omega}\left[\tau_1\right]\right]$$ by the ergodic theorem. The proof of the other direction is similar.
\end{proof}

\subsection{Differentiability of $r\mapsto\lambda(r)$}\label{otayak}

For every $r<r_c$, $n\geq1$ and $\mathbb{P}$-a.e.\ $\omega$, let $$\lambda_n(r,\omega):=\log E_o^\omega\left[\mathrm{e}^{r\tau_n},\tau_n<\infty\right].$$

\begin{lemma}\label{ongbak}
For every $r<r_c$ and $\mathbb{P}$-a.e.\ $\omega$, $$\lambda(r):=\lim_{n\to\infty}\frac{1}{n}\lambda_n(r,\omega)=-\mathbb{E}\left[\log u_r(\cdot,1)\right].$$
\end{lemma}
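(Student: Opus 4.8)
The plan is to compare $\lambda_n(r,\omega)$ with the additive cocycle $-\log u_r(\omega,n)$, to which Birkhoff's theorem applies directly, and to control the discrepancy by the bound already established in Lemma \ref{Qgot}. First I would record the multiplicative (cocycle) structure of $u_r$: since $u_r(\omega,0)=\lim_{m\to\infty}u_{r,m}(\omega,0)=1$, iterating the identity $u_r(T_k\omega,1)=u_r(\omega,k+1)/u_r(\omega,k)$ from (\ref{cangorecek}) gives $u_r(\omega,n)=\prod_{k=0}^{n-1}u_r(T_k\omega,1)$, hence $\log u_r(\omega,n)=\sum_{k=0}^{n-1}\log u_r(T_k\omega,1)$. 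By (\ref{cangorecek}) with $z=1$ one has $\delta\mathrm{e}^r\leq u_r(\cdot,1)\leq(\delta\mathrm{e}^r)^{-1}$, so $\log u_r(\cdot,1)\in L^\infty(\mathbb{P})\subset L^1(\mathbb{P})$; since $\mathbb{P}$ is stationary and ergodic under the shift, the ergodic theorem yields $\frac1n\log u_r(\omega,n)\to\mathbb{E}[\log u_r(\cdot,1)]$ for $\mathbb{P}$-a.e.\ $\omega$.

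Next I would show that $\lambda_n(r,\omega)+\log u_r(\omega,n)$ is bounded uniformly in $n$. Apply Lemma \ref{Qgot} with its role of ``$n$'' played by an arbitrary $m>n$ and its ``$x$'' played by our fixed $n\geq1$ (the hypothesis $0<n<m+B$ holds): this gives $(\delta\mathrm{e}^r)^{4(B-1)}\leq u_{r,m}(\omega,n)\,E_o^\omega[\mathrm{e}^{r\tau_n},\tau_n<\infty]\leq(\delta\mathrm{e}^r)^{-4(B-1)}$ for $\mathbb{P}$-a.e.\ $\omega$. Letting $m\to\infty$ and using $u_{r,m}(\omega,n)\to u_r(\omega,n)$ (established in the construction of $u_r$ in Subsection \ref{haifaguzel}), the same two-sided bound holds with $u_r(\omega,n)$ in place of $u_{r,m}(\omega,n)$. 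Since $E_o^\omega[\mathrm{e}^{r\tau_n},\tau_n<\infty]=\mathrm{e}^{\lambda_n(r,\omega)}$, this reads $|\lambda_n(r,\omega)+\log u_r(\omega,n)|\leq 4(B-1)\,|\log(\delta\mathrm{e}^r)|$ for every $n\geq1$ and $\mathbb{P}$-a.e.\ $\omega$.

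Combining the two steps on the full-measure set where both hold, $\frac1n\lambda_n(r,\omega)=-\frac1n\log u_r(\omega,n)+O(1/n)\to-\mathbb{E}[\log u_r(\cdot,1)]$, which simultaneously establishes that the limit defining $\lambda(r)$ exists and identifies its value. I do not expect a genuine obstacle here: the only points requiring attention are the elementary cocycle identity $u_r(\omega,n)=\prod_{k=0}^{n-1}u_r(T_k\omega,1)$ and the bookkeeping of the hypotheses of Lemma \ref{Qgot} under the reindexing $x\mapsto n$, $n\mapsto m$, $m\to\infty$. All the substantive work — existence of $u_r$, the ellipticity estimates (\ref{isilam})--(\ref{cangorecek}), and the uniform bound of Lemma \ref{Qgot} — has already been carried out in Subsection \ref{haifaguzel}.
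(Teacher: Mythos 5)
Your proposal is correct and follows essentially the same route as the paper: the two-sided bound from Lemma \ref{Qgot} (applied with a larger horizon $m$ and then letting $m\to\infty$) controls $\lambda_n(r,\omega)+\log u_r(\omega,n)$ uniformly, and the cocycle identity $u_r(\omega,n)=\prod_{k=0}^{n-1}u_r(T_k\omega,1)$ together with the ergodic theorem identifies the limit. Your write-up is merely a bit more explicit than the paper about the reindexing in Lemma \ref{Qgot} and the integrability of $\log u_r(\cdot,1)$, both of which the paper leaves implicit.
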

\begin{proof}
It follows from Lemma \ref{Qgot} that $(\delta\mathrm{e}^r)^{4(B-1)}\leq u_r(\omega,n)E_o^\omega\left[\mathrm{e}^{r\tau_n},\tau_n<\infty\right]\leq(\delta\mathrm{e}^r)^{-4(B-1)}$ for every $n\geq1$. Therefore, $$\lim_{n\to\infty}\left(\frac{1}{n}\log u_r(\omega,n) + \frac{1}{n}\log E_o^\omega\left[\mathrm{e}^{r\tau_n},\tau_n<\infty\right]\right) = 0.$$ However, by (\ref{feyziogluyla}), $$u_r(\omega,n)=\prod_{i=0}^{n-1}\frac{u_r(\omega,i+1)}{u_r(\omega,i)} = \prod_{i=0}^{n-1}u_r(T_i\omega,1).$$ Hence, it follows from the ergodic theorem that $$\lim_{n\to\infty}\frac{1}{n}\log E_o^\omega\left[\mathrm{e}^{r\tau_n},\tau_n<\infty\right] = -\lim_{n\to\infty}\frac{1}{n}\log u_r(\omega,n) = -\lim_{n\to\infty}\frac{1}{n}\sum_{i=0}^{n-1}\log u_r(T_i\omega,1) = -\mathbb{E}\left[\log u_r(\cdot,1)\right].\qedhere$$ 
\end{proof}

In this subsection, we prove that $r\mapsto\lambda(r)$ is differentiable on $(-\infty,r_c)$. For that purpose, we first obtain certain bounds on $\lambda_n'(r,\omega)$ and $\lambda_n''(r,\omega)$. These bounds are given in the next two lemmas which involve the function $$G(\omega):=\inf_{1\leq z'\leq B}\pi\left(-1-z',-1\right)\inf_{0\leq z<B}\pi(-1,z).$$
For every $n\geq1$, $0\leq z<B$ and $\mathbb{P}$-a.e.\ $\omega$, note that
\begin{align}
P_o^{r,\omega}\left(X_{\tau_n}=n+z\right)&\geq P_o^{r,\omega}\left(X_{\tau_{n-1}}=n-1, X_{\tau_n}=n+z\right)\nonumber\\
&\geq\inf_{1\leq z'\leq B}\hat{\pi}_r\left(T_{n-1-z'}\omega,z'\right)\hat{\pi}_r(T_{n-1}\omega,z+1)\nonumber\\
&\geq(\delta\mathrm{e}^r)^{2B}\mathrm{e}^{2r}\inf_{1\leq z'\leq B}\pi\left(n-1-z',n-1\right)\pi(n-1,n+z)\label{madakdamak}\\
&\geq(\delta\mathrm{e}^r)^{2B}\mathrm{e}^{2r}G(T_n\omega)\nonumber
\end{align} where (\ref{madakdamak}) follows from (\ref{ikincicinko}).

\begin{lemma}\label{tornacihuso}
For every $r<r_c$, $n\geq1$ and $\mathbb{P}$-a.e.\ $\omega$,
\begin{equation}\label{cirkinsonja}
\left|\lambda_n'(r,\omega)-E_o^{r,\omega}\left[\tau_n\right]\right|\leq\frac{W_1(r)}{G(T_n\omega)}
\end{equation} is satisfied with $$W_1(r):=\frac{(\delta\mathrm{e}^r)^{-2B}\mathrm{e}^{-2r}H_1(r)c(r)^{1-B}}{1-c(r)},\quad H_1(r)\mbox{ as in (\ref{boundoyleolmazboyleolur}), and}\quad c(r) := \left(1-\left(\delta\mathrm{e}^r\right)^{2B}\right)^{1/B}<1.$$
\end{lemma}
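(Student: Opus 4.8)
The plan is to realize $\lambda_n'(r,\omega)$ as an expected passage time under a tilted kernel, recognize that kernel as a Doob $h$-transform of $\hat{\pi}_r$, and then control the resulting correction by a ladder decomposition combined with the oscillation estimate of Lemma~\ref{muhacir}. For $r<r'<r_c$ one has $\tau_n^k\mathrm{e}^{r\tau_n}\le C\,\mathrm{e}^{r'\tau_n}$, so all exponential moments are finite, $\lambda_n(\cdot,\omega)$ is smooth near $r$, and
$$\lambda_n'(r,\omega)=\frac{E_o^\omega\left[\tau_n\mathrm{e}^{r\tau_n},\tau_n<\infty\right]}{E_o^\omega\left[\mathrm{e}^{r\tau_n},\tau_n<\infty\right]},$$
which is the expectation of $\tau_n$ under the law obtained from $P_o^\omega$ by tilting by $\mathrm{e}^{r\tau_n}$ on $\mathcal{F}_{\tau_n}$. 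Since $u_{r,n}(\omega,y)=\bigl(E_o^\omega[\mathrm{e}^{r\tau_n},\tau_n<\infty]\bigr)^{-1}$ for every $y\ge n$, the telescoping identity for cylinder probabilities (as in the proof of Lemma~\ref{ewayak}) shows this tilted law agrees with $Q_o^{n,\omega}$ on $\mathcal{F}_{\tau_n}$; hence $\lambda_n'(r,\omega)=E^{Q_o^{n,\omega}}[\tau_n]$, a finite number.

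Next I would put $\rho(y):=u_{r,n}(\omega,y)/u_r(\omega,y)$. From the linear recursion $u_{r,n}(\omega,x)=\sum_{z}\pi(x,x+z)\mathrm{e}^r u_{r,n}(\omega,x+z)$ valid on $(-\infty,n)$, the analogous recursion for $u_r$ on all of $\mathbb{Z}$, and $\hat{\pi}_r(T_x\omega,z)=\pi(x,x+z)\mathrm{e}^r u_r(\omega,x+z)/u_r(\omega,x)$, one checks that $\rho$ is $\hat{\pi}_r$-harmonic on $(-\infty,n)$ and $\rho(0)=1$. Comparing cylinder probabilities gives $\mathrm{d}Q_x^{n,\omega}/\mathrm{d}P_x^{r,\omega}=\rho(X_{\tau_n})/\rho(x)$ on $\mathcal{F}_{\tau_n}$ for $x<n$, and since both measures put full mass on $\{\tau_n<\infty\}$ (Lemma~\ref{kayahan}) this forces $\rho(x)=E_x^{r,\omega}[\rho(X_{\tau_n})]$ for $x<n$; in particular
$$\lambda_n'(r,\omega)-E_o^{r,\omega}[\tau_n]=E_o^{r,\omega}\left[\tau_n\bigl(\rho(X_{\tau_n})-1\bigr)\right].$$
For the size of $\rho$: from $\rho\ge0$ and $\rho(x)=E_x^{r,\omega}[\rho(X_{\tau_n})]$ the maximum principle gives $0\le\rho\le\max_{0\le z<B}\rho(n+z)$, and feeding $1=\rho(0)=\sum_z P_o^{r,\omega}(X_{\tau_n}=n+z)\rho(n+z)$ into the overshoot lower bound $P_o^{r,\omega}(X_{\tau_n}=n+z)\ge(\delta\mathrm{e}^r)^{2B}\mathrm{e}^{2r}G(T_n\omega)$ established just above yields $\rho\le R:=(\delta\mathrm{e}^r)^{-2B}\mathrm{e}^{-2r}/G(T_n\omega)$. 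Then Lemma~\ref{muhacir}, applied with $m=n$, $L=\hat{\pi}_r$ (elliptic with constant $(\delta\mathrm{e}^r)^2$ by (\ref{tombala})), $v=\rho$, and $\sup_{0\le z,z'<B}[\rho(n+z)-\rho(n+z')]\le R$, gives $|\rho(x)-1|=|\rho(x)-\rho(0)|\le c(r)^{\,n-x}R$ for $0<x<n$.

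Finally, decompose $\tau_n=\sum_{j=1}^n(\tau_j-\tau_{j-1})$ with $\tau_0:=0$. The increment $\tau_j-\tau_{j-1}$ is $\mathcal{F}_{\tau_j}$-measurable and vanishes off $\{X_{\tau_{j-1}}=j-1\}$; using $E_o^{r,\omega}[\rho(X_{\tau_n})\mid\mathcal{F}_{\tau_j}]=\rho(X_{\tau_j})$ (the identity above, valid whether or not $X_{\tau_j}<n$) together with the strong Markov property at $\tau_{j-1}$ on that event, one obtains
$$\lambda_n'(r,\omega)-E_o^{r,\omega}[\tau_n]=\sum_{j=1}^n P_o^{r,\omega}\bigl(X_{\tau_{j-1}}=j-1\bigr)\,E_{j-1}^{r,\omega}\bigl[\tau_j\bigl(\rho(X_{\tau_j})-1\bigr)\bigr].$$
Since $j\le X_{\tau_j}\le j+B-1$, the decay bound (for $j\le n-B$) and the crude bound $\rho\in[0,R]$ (for the $O(B)$ indices $j>n-B$, where $c(r)^{\,n-j+1-B}\ge1$) together give $|\rho(X_{\tau_j})-1|\le R\,c(r)^{1-B}c(r)^{\,n-j}$, while $E_{j-1}^{r,\omega}[\tau_j]=E_o^{r,T_{j-1}\omega}[\tau_1]\le H_1(r)$ by Lemma~\ref{mavisap}. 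Bounding the passage probabilities by $1$ and summing $\sum_{j\ge1}c(r)^{\,n-j}\le(1-c(r))^{-1}$ gives $|\lambda_n'(r,\omega)-E_o^{r,\omega}[\tau_n]|\le R\,H_1(r)c(r)^{1-B}/(1-c(r))=W_1(r)/G(T_n\omega)$.

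The step I expect to be the main obstacle is the $h$-transform identification in the second paragraph: verifying $\hat{\pi}_r$-harmonicity of $\rho$ up to but not at level $n$, keeping track of $\rho$ on $\{n,\dots,n+B-1\}$, and --- for the boundary indices $j>n-B$ in the last sum --- invoking Lemma~\ref{Qgot} to control $\rho$ near level $n$ tightly enough that the final constant comes out exactly as $W_1(r)$ rather than a larger multiple of it.
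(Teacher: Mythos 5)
Your proof is correct and produces exactly the constant $W_1(r)$, but it packages the estimate differently from the paper. The paper rewrites $\lambda_n'(r,\omega)$ as a $P_o^{r,\omega}$-expectation of $\tau_n$ reweighted by $u_r(\omega,X_{\tau_n})^{-1}$, sandwiches it between $\inf_{0\le z<B}$ and $\sup_{0\le z<B}$ of $E_o^{r,\omega}\left[\tau_n\,\middle|\,X_{\tau_n}=n+z\right]$, and then compares each conditional expectation with $E_o^{r,\omega}[\tau_n]$ via the ladder decomposition, applying Lemma \ref{muhacir} to the harmonic functions $y\mapsto P_y^{r,\omega}(X_{\tau_n}=n+z)$ together with the overshoot bound $P_o^{r,\omega}(X_{\tau_n}=n+z)\ge(\delta\mathrm{e}^r)^{2B}\mathrm{e}^{2r}G(T_n\omega)$ and Lemma \ref{mavisap}. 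You instead identify the $\mathrm{e}^{r\tau_n}$-tilted law with $Q_o^{n,\omega}$, write its density with respect to $P_o^{r,\omega}$ on $\mathcal{F}_{\tau_n}$ as $\rho(X_{\tau_n})$ with $\rho=u_{r,n}/u_r$, use that $\rho$ is $\hat{\pi}_r$-harmonic below level $n$ to get the error in the form $E_o^{r,\omega}\left[\tau_n\left(\rho(X_{\tau_n})-1\right)\right]$, and apply Lemma \ref{muhacir} to $\rho$ itself; the same overshoot bound, ladder decomposition, $H_1(r)$ bound and geometric sum then close the argument. Since the ingredients (Lemmas \ref{muhacir}, \ref{mavisap}, the full-mass statements of Lemmas \ref{ewayak} and \ref{kayahan}, the $G(T_n\omega)$ lower bound) coincide, this is a reorganization rather than a new method, but it has two genuine advantages: it gives both directions of (\ref{cirkinsonja}) in one stroke (the paper proves one side and declares the other similar), and it makes explicit the Doob $h$-transform structure that is only implicit in the paper's $u_r(\omega,X_{\tau_n})^{-1}$ weights. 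Two small points you should spell out: the crude boundary bound $\left|\rho-1\right|\le R$ needs $R\ge1$, which holds because the $P_o^{r,\omega}(X_{\tau_n}=\cdot)$-average of $\rho(n+\cdot)$ equals $\rho(0)=1$, so $\max_z\rho(n+z)\ge1$; and the identity $\rho(x)=E_x^{r,\omega}\left[\rho(X_{\tau_n})\right]$ for all $x<n$ uses $P_x^{r,\omega}(\tau_n<\infty)=1$ simultaneously for every $x$, which follows from Lemma \ref{kayahan} and shift invariance on a single full-measure set of $\omega$'s. Your closing worry about needing Lemma \ref{Qgot} near level $n$ is unfounded: the crude bound $\rho\le R$ already yields the stated constant for the $O(B)$ boundary indices, exactly as you argued.
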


\begin{proof}
For every $r<r_c$, $n\geq1$ and $\mathbb{P}$-a.e.\ $\omega$,
\begin{align*}
\lambda_n'(r,\omega)&=\frac{E_o^\omega\left[\tau_n\mathrm{e}^{r\tau_n},\tau_n<\infty\right]}{E_o^\omega\left[\mathrm{e}^{r\tau_n},\tau_n<\infty\right]}=\frac{E_o^{r,\omega}\left[\tau_nu_r\left(\omega,X_{\tau_n}\right)^{-1}\right]}{E_o^{r,\omega}\left[u_r\left(\omega,X_{\tau_n}\right)^{-1}\right]}=\frac{\sum_{z=0}^{B-1}E_o^{r,\omega}\left[\tau_n, X_{\tau_n}=n+z\right]u_r\left(\omega,n+z\right)^{-1}}{\sum_{z=0}^{B-1}P_o^{r,\omega}\left(X_{\tau_n}=n+z\right)u_r\left(\omega,n+z\right)^{-1}}.
\end{align*} Therefore,
\begin{equation}\label{turgayevlendi}
\inf_{0\leq z<B}E_o^{r,\omega}\left[\left.\tau_n\,\right| X_{\tau_n}=n+z\right]\leq\lambda_n'(r,\omega)\leq\sup_{0\leq z<B}E_o^{r,\omega}\left[\left.\tau_n\,\right| X_{\tau_n}=n+z\right].
\end{equation}
If $1\leq i\leq n$ and $0\leq z<B$, then
\begin{align}
E_o^{r,\omega}\left[\tau_i - \tau_{i-1}, X_{\tau_n}=n+z\right]&=\sum_{z'=0}^{B-1}E_o^{r,\omega}\left[\tau_i - \tau_{i-1}, X_{\tau_i}=i+z', X_{\tau_n}=n+z\right]\nonumber\\
&=\sum_{z'=0}^{B-1}E_o^{r,\omega}\left[\tau_i - \tau_{i-1}, X_{\tau_i}=i+z'\right]P_{i+z'}^{r,\omega}\left(X_{\tau_n}=n+z\right)\nonumber\\
&\leq\sum_{z'=0}^{B-1}E_o^{r,\omega}\left[\tau_i - \tau_{i-1}, X_{\tau_i}=i+z'\right]\left(P_o^{r,\omega}\left(X_{\tau_n}=n+z\right)+c(r)^{n-(i+z')}\right)\label{yigitozludon}\\
&\leq E_o^{r,\omega}\left[\tau_i - \tau_{i-1}\right]\left(P_o^{r,\omega}\left(X_{\tau_n}=n+z\right)+c(r)^{n-(i+(B-1))}\right)\label{wissensie}
\end{align} where (\ref{yigitozludon}) follows from Lemma \ref{muhacir}. Recall Lemma \ref{mavisap} and see that
\begin{align*}
E_o^{r,\omega}\left[\left.\tau_n\,\right| X_{\tau_n}=n+z\right]&=\sum_{i=1}^nE_o^{r,\omega}\left[\left.\tau_i - \tau_{i-1}\,\right| X_{\tau_n}=n+z\right]\\
&\leq\sum_{i=1}^nE_o^{r,\omega}\left[\tau_i - \tau_{i-1}\right]\left(\frac{P_o^{r,\omega}\left(X_{\tau_n}=n+z\right)+c(r)^{n-(i+(B-1))}}{P_o^{r,\omega}\left(X_{\tau_n}=n+z\right)}\right)\\
&=E_o^{r,\omega}\left[\tau_n\right] + \frac{1}{P_o^{r,\omega}\left(X_{\tau_n}=n+z\right)}\sum_{i=1}^nc(r)^{n-(i+(B-1))}E_o^{r,\omega}\left[\tau_i - \tau_{i-1}\right]\\
&\leq E_o^{r,\omega}\left[\tau_n\right] + \frac{(\delta\mathrm{e}^r)^{-2B}\mathrm{e}^{-2r}}{G(T_n\omega)}\sum_{j=1}^n c(r)^{j-B}E_o^{r,T_{n-j}\omega}\left[\tau_1\right]\\
&\leq E_o^{r,\omega}\left[\tau_n\right] + \frac{(\delta\mathrm{e}^r)^{-2B}\mathrm{e}^{-2r}H_1(r)}{G(T_n\omega)}\sum_{j=1}^n c(r)^{j-B}\\
&\leq E_o^{r,\omega}\left[\tau_n\right] + \frac{(\delta\mathrm{e}^r)^{-2B}\mathrm{e}^{-2r}H_1(r)c(r)^{1-B}}{(1-c(r))G(T_n\omega)}\\
&= E_o^{r,\omega}\left[\tau_n\right] + \frac{W_1(r)}{G(T_n\omega)}.
\end{align*} This bound, in combination with  (\ref{turgayevlendi}), implies that 
$$\lambda_n'(r,\omega)-E_o^{r,\omega}\left[\tau_n\right]\leq\sup_{0\leq z<B}E_o^{r,\omega}\left[\left.\tau_n\,\right| X_{\tau_n}=n+z\right]-E_o^{r,\omega}\left[\tau_n\right]\leq\frac{W_1(r)}{G(T_n\omega)}.$$
The proof of the other direction is similar.
\end{proof}

\begin{lemma}
For every $r<r_c$, $n\geq1$ and $\mathbb{P}$-a.e.\ $\omega$,
\begin{equation}\label{cirkinmarina}
\lambda_n''(r,\omega)\leq\left(\frac{W_1(r)}{G(T_n\omega)}\right)^2 + n\left(\frac{W_2(r)+2H_1(r)W_1(r)}{G(T_n\omega)}\right)
\end{equation} is satisfied with $$W_2(r):=(\delta\mathrm{e}^r)^{-2B}\mathrm{e}^{-2r}\left(H_2(r)+\frac{6\left(H_1(r)\right)^2c(r)^{-2(B-1)}}{1-c(r)}\right),$$ $H_1(r)$ and $H_2(r)$ as in (\ref{boundoyleolmazboyleolur}), and $W_1(r)$ as in Lemma \ref{tornacihuso}.%\quad\mbox{and}\quad\overline{W}_2(r):=\frac{4\left(H_1(r)\right)^2c(r)^{-2(B-1)}}{1-c(r)}.$$
\end{lemma}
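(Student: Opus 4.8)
The plan is to express $\lambda_n''(r,\omega)$ as a variance and then split that variance according to the (bounded) exit level of the walk. Let $\tilde{P}=\tilde{P}_{r,n}^{\omega}$ be the probability measure on paths with $\mathrm{d}\tilde{P}/\mathrm{d}P_o^\omega=\mathrm{e}^{r\tau_n}\one_{\tau_n<\infty}/E_o^\omega[\mathrm{e}^{r\tau_n},\tau_n<\infty]$. Differentiating $\lambda_n(r,\omega)=\log E_o^\omega[\mathrm{e}^{r\tau_n},\tau_n<\infty]$ twice in $r$ (the interchange of derivative and expectation being justified by Lemma~\ref{mavisap}) gives $\lambda_n'(r,\omega)=E_{\tilde P}[\tau_n]$ and $\lambda_n''(r,\omega)=\mathrm{Var}_{\tilde P}(\tau_n)$. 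Exactly as in the proof of Lemma~\ref{tornacihuso}, $\tilde{P}$ is obtained from $P_o^{r,\omega}$ by reweighting with $u_r(\omega,X_{\tau_n})^{-1}$, which is a function of $X_{\tau_n}\in\{n,\dots,n+B-1\}$ only; hence, conditionally on $\{X_{\tau_n}=n+z\}$, the laws of $\tau_n$ under $\tilde{P}$ and under $P_o^{r,\omega}$ coincide. The law of total variance then yields
\[
\lambda_n''(r,\omega)=E_{\tilde P}\!\left[\mathrm{Var}_{P_o^{r,\omega}}\!\left(\tau_n\mid X_{\tau_n}\right)\right]+\mathrm{Var}_{\tilde P}\!\left(E_o^{r,\omega}\!\left[\tau_n\mid X_{\tau_n}\right]\right).
\]
The second term is disposed of immediately: the one-step estimate established inside the proof of Lemma~\ref{tornacihuso} shows $\left|E_o^{r,\omega}[\tau_n\mid X_{\tau_n}=n+z]-E_o^{r,\omega}[\tau_n]\right|\le W_1(r)/G(T_n\omega)$ for each $0\le z<B$, so $z\mapsto E_o^{r,\omega}[\tau_n\mid X_{\tau_n}=n+z]$ ranges over an interval of length at most $2W_1(r)/G(T_n\omega)$ and its variance under any probability law is at most $(W_1(r)/G(T_n\omega))^2$, which is the first summand on the right of (\ref{cirkinmarina}).

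It remains to bound $\mathrm{Var}_{P_o^{r,\omega}}(\tau_n\mid X_{\tau_n}=n+z)$ by a quantity linear in $n$. Writing $\tau_n=\sum_{i=1}^n\Delta_i$ with $\Delta_i:=\tau_i-\tau_{i-1}$, expand the conditional variance as $\sum_{i=1}^n\mathrm{Var}(\Delta_i\mid X_{\tau_n}=n+z)+2\sum_{i<j}\mathrm{Cov}(\Delta_i,\Delta_j\mid X_{\tau_n}=n+z)$. For the diagonal terms, condition on $X_{\tau_i}$ and apply the Markov property and Lemma~\ref{muhacir} verbatim as in the proof of Lemma~\ref{tornacihuso}, but with $\Delta_i^2$ in place of $\Delta_i$; this bounds $E[\Delta_i^2\mid X_{\tau_n}=n+z]$ by $E_o^{r,T_{i-1}\omega}[\tau_1^2]$ times a factor $(1+c(r)^{\,n-i-B+1}/P_o^{r,\omega}(X_{\tau_n}=n+z))$, and summing over $i$, using $E_o^{r,\cdot}[\tau_1^2]\le H_2(r)$ from Lemma~\ref{mavisap} together with the lower bound $P_o^{r,\omega}(X_{\tau_n}=n+z)\ge(\delta\mathrm{e}^r)^{2B}\mathrm{e}^{2r}G(T_n\omega)$ from (\ref{madakdamak}), bounds the diagonal contribution by $n(\delta\mathrm{e}^r)^{-2B}\mathrm{e}^{-2r}H_2(r)/G(T_n\omega)$ up to geometrically summable remainders. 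For the cross terms, condition on the path up to $\tau_i$ and use the Markov property to write $E[\Delta_i\Delta_j\mid X_{\tau_n}=n+z]$ as an average over $X_{\tau_i}=i+z'$ of $E_o^{r,T_{i-1}\omega}[\Delta_i\mid\cdot]$ against conditioned increments of the walk restarted from $i+z'$; comparing both $E[\Delta_i\Delta_j\mid\cdot]$ and $E[\Delta_i\mid\cdot]\,E[\Delta_j\mid\cdot]$ to the unconditioned products via the one-step bounds of Lemma~\ref{tornacihuso}, each perturbation carries a factor $c(r)^{\mathrm{dist}}/P_o^{r,\omega}(X_{\tau_n}=n+z)$ that decays geometrically in $j-i$ and in $n-j$, so the double sum over $i<j$ collapses to an $O(n)$ quantity of order $n(\delta\mathrm{e}^r)^{-2B}\mathrm{e}^{-2r}H_1(r)^2c(r)^{-2(B-1)}/((1-c(r))G(T_n\omega))$ plus a term of order $nH_1(r)W_1(r)/G(T_n\omega)$. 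Adding the diagonal and cross bounds and comparing with the definitions of $W_1(r)$ and $W_2(r)$ produces exactly the coefficient $W_2(r)+2H_1(r)W_1(r)$ in (\ref{cirkinmarina}); the constants $6$ and $2$ there absorb the combinatorial and geometric-series factors.

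The main obstacle is this cross-term bookkeeping: one must verify that conditioning on $\{X_{\tau_n}=n+z\}$ induces only correlations between $\Delta_i$ and $\Delta_j$ that decay geometrically in the gaps $j-i$ and $n-j$, so that $\sum_{i<j}\left|\mathrm{Cov}(\Delta_i,\Delta_j\mid X_{\tau_n}=n+z)\right|$ is $O(n)$ rather than $O(n^2)$. This is precisely where the contraction estimate of Lemma~\ref{muhacir} (with geometric rate $c(r)<1$) and the uniform ellipticity lower bound $P_o^{r,\omega}(X_{\tau_n}=n+z)\ge(\delta\mathrm{e}^r)^{2B}\mathrm{e}^{2r}G(T_n\omega)$ do the essential work, and it is what forces the somewhat involved form of the constant $W_2(r)$.
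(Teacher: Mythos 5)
Your proposal is correct and follows essentially the same route as the paper: the same tilted-measure identification of $\lambda_n''(r,\omega)$ as a variance via the $h$-transform relation $u_r(\omega,X_{\tau_n})^{-1}$, the same increment decomposition $\tau_n=\sum_i(\tau_i-\tau_{i-1})$, and the same use of Lemma \ref{muhacir}, the ellipticity bound (\ref{madakdamak}) and the moment bounds of Lemma \ref{mavisap} to keep the conditioned cross terms $O(n)$. The only difference is bookkeeping: the paper bounds $\sup_{0\leq z<B}E_o^{r,\omega}[\tau_n^2\mid X_{\tau_n}=n+z]-(\lambda_n'(r,\omega))^2$ and then compares $(E_o^{r,\omega}[\tau_n])^2$ with $(\lambda_n'(r,\omega))^2$ (which is exactly where its $(W_1/G)^2$ and $2nH_1W_1/G$ terms arise), whereas you package the same estimates through the law of total variance, and the resulting constants agree.
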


\begin{proof}
For every $r<r_c$, $n\geq1$ and $\mathbb{P}$-a.e.\ $\omega$,
\begin{align}
\lambda_n''(r,\omega)&=\frac{E_o^\omega\left[\tau_n^2\mathrm{e}^{r\tau_n},\tau_n<\infty\right]}{E_o^\omega\left[\mathrm{e}^{r\tau_n},\tau_n<\infty\right]}-\left(\lambda_n'(r,\omega)\right)^2=\frac{E_o^{r,\omega}\left[\tau_n^2u_r\left(\omega,X_{\tau_n}\right)^{-1}\right]}{E_o^{r,\omega}\left[u_r\left(\omega,X_{\tau_n}\right)^{-1}\right]}-\left(\lambda_n'(r,\omega)\right)^2\nonumber\\
&=\frac{\sum_{z=0}^{B-1}E_o^{r,\omega}\left[\tau_n^2, X_{\tau_n}=n+z\right]u_r\left(\omega,n+z\right)^{-1}}{\sum_{z=0}^{B-1}P_o^{r,\omega}\left(X_{\tau_n}=n+z\right)u_r\left(\omega,n+z\right)^{-1}}-\left(\lambda_n'(r,\omega)\right)^2\nonumber\\
&\leq\sup_{0\leq z<B}E_o^{r,\omega}\left[\left.\tau_n^2\,\right| X_{\tau_n}=n+z\right] - \left(\lambda_n'(r,\omega)\right)^2.\label{lafalasol}
\end{align}
If $1\leq i<j\leq n$, then
\begin{align}
&E_o^{r,\omega}\left[(\tau_i-\tau_{i-1})(\tau_j-\tau_{j-1})\right]\nonumber\\
&\quad= E_o^{r,\omega}\left[(\tau_i-\tau_{i-1})(\tau_j-\tau_{j-1}), X_{\tau_{j-1}}=j-1\right]\nonumber\\
&\quad=E_o^{r,\omega}\left[\tau_i-\tau_{i-1}, X_{\tau_{j-1}}=j-1\right]E_{j-1}^{r,\omega}\left[\tau_j\right]\nonumber\\
&\quad\leq E_o^{r,\omega}\left[\tau_i-\tau_{i-1}\right]\left(P_o^{r,\omega}(X_{\tau_{j-1}}=j-1) + c(r)^{(j-1)-(i+(B-1))}\right)E_{j-1}^{r,\omega}\left[\tau_j\right]\label{wurden}\\
&\quad\leq E_o^{r,\omega}\left[\tau_i-\tau_{i-1}\right]E_o^{r,\omega}\left[\tau_j-\tau_{j-1}\right] +\left(H_1(r)\right)^2c(r)^{(j-1)-(i+(B-1))}\nonumber
\end{align} where (\ref{wurden}) follows from (\ref{wissensie}).

\noindent If $0\leq z<B$, then
\begin{align*}
&E_o^{r,\omega}\left[(\tau_i-\tau_{i-1})(\tau_j-\tau_{j-1}), X_{\tau_n}=n+z\right]\\
&\quad\leq E_o^{r,\omega}\left[(\tau_i-\tau_{i-1})(\tau_j-\tau_{j-1})\right]\left(P_o^{r,\omega}\left(X_{\tau_n}=n+z\right)+c(r)^{n-(j+(B-1))}\right)\\
&\quad\leq \left(E_o^{r,\omega}\left[\tau_i-\tau_{i-1}\right]E_o^{r,\omega}\left[\tau_j-\tau_{j-1}\right] +\left(H_1(r)\right)^2c(r)^{(j-1)-(i+(B-1))}\right)P_o^{r,\omega}\left(X_{\tau_n}=n+z\right)\\
&\quad\quad+\left(H_1(r)\right)^2\left(1+c(r)^{(j-1)-(i+(B-1))}\right)c(r)^{n-(j+(B-1))}.
\end{align*}
Therefore,
\begin{align*}
&E_o^{r,\omega}\left[\left.\tau_n^2\,\right| X_{\tau_n}=n+z\right]\\
&\quad=\sum_{i=1}^nE_o^{r,\omega}\left[\left.\left(\tau_i-\tau_{i-1}\right)^2\,\right| X_{\tau_n}=n+z\right]+2\sum_{j=1}^n\sum_{i=1}^{j-1}E_o^{r,\omega}\left[\left.\left(\tau_i-\tau_{i-1}\right)\left(\tau_j-\tau_{j-1}\right)\,\right| X_{\tau_n}=n+z\right]\\
&\quad\leq\frac{nH_2(r)}{P_o^{r,\omega}\left(X_{\tau_n}=n+z\right)}+2\sum_{j=1}^n\sum_{i=1}^{j-1}E_o^{r,\omega}\left[\tau_i-\tau_{i-1}\right]E_o^{r,\omega}\left[\tau_j-\tau_{j-1}\right]\\
&\quad\quad+2\left(H_1(r)\right)^2\sum_{j=1}^n\sum_{i=1}^{j-1}\left[c(r)^{(j-1)-(i+(B-1))}+\frac{c(r)^{n-(j+(B-1))}}{P_o^{r,\omega}\left(X_{\tau_n}=n+z\right)}\left(1+c(r)^{(j-1)-(i+(B-1))}\right)\right]\\
&\quad\leq\frac{nH_2(r)}{P_o^{r,\omega}\left(X_{\tau_n}=n+z\right)}+\left(E_o^{r,\omega}\left[\tau_n\right]\right)^2\\
&\quad\quad+\frac{2\left(H_1(r)\right)^2}{P_o^{r,\omega}\left(X_{\tau_n}=n+z\right)}\left[\frac{n\cdot c(r)^{-(B-1)}}{1-c(r)}+\frac{n\cdot c(r)^{-(B-1)}}{1-c(r)}+\frac{n\cdot c(r)^{-2(B-1)}}{1-c(r)}\right]\\
&\quad=\left(E_o^{r,\omega}\left[\tau_n\right]\right)^2+\frac{n}{P_o^{r,\omega}\left(X_{\tau_n}=n+z\right)}\left(H_2(r)+\frac{6\left(H_1(r)\right)^2c(r)^{-2(B-1)}}{1-c(r)}\right)\\
&\quad\leq\left(E_o^{r,\omega}\left[\tau_n\right]\right)^2+\frac{n\cdot(\delta\mathrm{e}^r)^{-2B}\mathrm{e}^{-2r}}{G(T_n\omega)}\left(H_2(r)+\frac{6\left(H_1(r)\right)^2c(r)^{-2(B-1)}}{1-c(r)}\right)\\
&\quad=\left(E_o^{r,\omega}\left[\tau_n\right]\right)^2+n\left(\frac{W_2(r)}{G(T_n\omega)}\right).
\end{align*}
Recall the bounds in (\ref{cirkinsonja}) and (\ref{lafalasol}), and conclude that
\begin{align*}
\lambda_n''(r,\omega)&\leq\sup_{0\leq z<B}E_o^{r,\omega}\left[\left.\tau_n^2\,\right| X_{\tau_n}=n+z\right] - \left(\lambda_n'(r,\omega)\right)^2\\
&=\sup_{0\leq z<B}E_o^{r,\omega}\left[\left.\tau_n^2\,\right| X_{\tau_n}=n+z\right] - \left(E_o^{r,\omega}\left[\tau_n\right]\right)^2 + \left(\left(E_o^{r,\omega}\left[\tau_n\right]\right)^2 - \left(\lambda_n'(r,\omega)\right)^2\right)\\
&\leq n\left(\frac{W_2(r)}{G(T_n\omega)}\right)+ \left[E_o^{r,\omega}\left[\tau_n\right] + \lambda_n'(r,\omega)\right]\left[E_o^{r,\omega}\left[\tau_n\right] - \lambda_n'(r,\omega)\right]\\
&\leq n\left(\frac{W_2(r)}{G(T_n\omega)}\right)+ \left(2nH_1(r)+\frac{W_1(r)}{G(T_n\omega)}\right)\frac{W_1(r)}{G(T_n\omega)}\\
&= \left(\frac{W_1(r)}{G(T_n\omega)}\right)^2 + n\left(\frac{W_2(r)+2H_1(r)W_1(r)}{G(T_n\omega)}\right).\qedhere
\end{align*}
\end{proof}

\begin{lemma}\label{sakinnazik}
$r\mapsto\lambda(r)$ is differentiable on $(-\infty,r_c)$ with $$\lambda'(r)=g(r)=\mathbb{E}\left[\lim_{x\to-\infty}P_x^{r,\omega}\left(X_{\tau_o}=0\right)E_o^{r,\omega}\left[\tau_1\right]\right].$$
\end{lemma}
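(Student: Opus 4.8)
The plan is to transfer differentiability from the prelimit functions $\lambda_n(\cdot,\omega)=\log E_o^\omega[\mathrm{e}^{r\tau_n},\tau_n<\infty]$, which are analytic and convex in $r$ on $(-\infty,r_c)$, to the normalized limit $\lambda$, using the uniform control on $\lambda_n'$ and $\lambda_n''$ supplied by Lemma~\ref{tornacihuso} and the bound (\ref{cirkinmarina}), together with the identification $\tfrac1nE_o^{r,\omega}[\tau_n]\to g(r)$ from Lemma~\ref{bugeceler}. The one genuine difficulty is that both (\ref{cirkinsonja}) and (\ref{cirkinmarina}) carry a factor $G(T_n\omega)^{-1}$ which, under the mere $(1+\alpha)$-th log-moment hypothesis (A1), need not be $o(n)$ along the full sequence -- indeed $G^{-1}$ need not lie in $L^1(\mathbb{P})$ -- so one cannot naively divide by $n$ and let $n\to\infty$. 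I will get around this by passing to a random subsequence along which $G(T_n\omega)$ stays bounded away from $0$.

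So first fix $\epsilon>0$ with $\mathbb{P}(G\geq\epsilon)>0$ (possible since (A1) forces $\pi(0,z)>0$ $\mathbb{P}$-a.s.\ for each $z\in\mathcal{R}$, hence $G>0$ $\mathbb{P}$-a.s.). By the ergodic theorem, for $\mathbb{P}$-a.e.\ $\omega$ the set $\{n\geq1:G(T_n\omega)\geq\epsilon\}$ is infinite; enumerate it as $n_1<n_2<\cdots$ and set $f_k:=\tfrac1{n_k}\lambda_{n_k}(\cdot,\omega)$. Each $f_k$ is convex and $C^2$ on $(-\infty,r_c)$, and $f_k\to\lambda$ pointwise there (convexity upgrades pointwise convergence on a countable dense set of $r$'s -- available for a.e.\ $\omega$ by Lemma~\ref{ongbak} -- to convergence throughout the interior, for the subsequence as well). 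Along this subsequence $G(T_{n_k}\omega)^{-1}\leq\epsilon^{-1}$, so Lemma~\ref{tornacihuso} gives $|f_k'(r)-\tfrac1{n_k}E_o^{r,\omega}[\tau_{n_k}]|\leq W_1(r)/(n_k\epsilon)\to0$ and hence $f_k'(r)\to g(r)$ for every $r<r_c$ by Lemma~\ref{bugeceler}, while (\ref{cirkinmarina}) gives $f_k''(r)\leq\tfrac1{n_k}\bigl(W_1(r)/\epsilon\bigr)^2+\bigl(W_2(r)+2H_1(r)W_1(r)\bigr)/\epsilon$, which is bounded uniformly in $k$ on every compact subinterval of $(-\infty,r_c)$ since $W_1,W_2,H_1$ are continuous there.

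Consequently $\{f_k'\}$ is locally equi-Lipschitz and converges pointwise to $g$, so by the Arzel\`a--Ascoli theorem $f_k'\to g$ locally uniformly, and in particular $g$ is continuous on $(-\infty,r_c)$. Writing $f_k(r)-f_k(a)=\int_a^r f_k'(s)\,\mathrm{d}s$ for $a<r<r_c$ and letting $k\to\infty$ yields $\lambda(r)-\lambda(a)=\int_a^r g(s)\,\mathrm{d}s$; since $g$ is continuous this makes $\lambda$ continuously differentiable on $(-\infty,r_c)$ with $\lambda'=g$. The explicit expression for $\lambda'(r)$ then follows from the identity for $g$ in Lemma~\ref{bugeceler}.

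The main obstacle, as indicated, is the boundary factor $G(T_n\omega)^{-1}$ in the estimates on $\lambda_n'$ and $\lambda_n''$: under (A1) alone it cannot be absorbed into a limit over all $n$, but it is harmless along the positive-density set of indices for which the environment near level $n$ is not too degenerate, and this is exactly what the random subsequence exploits. The remaining work is routine: the pointwise-to-everywhere upgrade of convex convergence, the equicontinuity argument, and checking that $W_1,W_2,H_1,H_2$ are locally bounded in $r$ (immediate from $H_m(r)=m!(r_c-r)^{-m}(\delta\mathrm{e}^r)^{-2B}$ and the definitions of $W_1,W_2$, all continuous on $(-\infty,r_c)$).
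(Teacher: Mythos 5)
Your proof is correct and follows essentially the same route as the paper: both pass, via the ergodic theorem, to a random subsequence $(n_k)$ along which $G(T_{n_k}\omega)\geq\epsilon$, use Lemma~\ref{tornacihuso} together with Lemma~\ref{bugeceler} to identify $\lim_k \tfrac1{n_k}\lambda_{n_k}'(r,\omega)=g(r)$, control $\tfrac1{n_k}\lambda_{n_k}''$ via (\ref{cirkinmarina}), and integrate to get $\lambda(r)-\lambda(r_1)=\int_{r_1}^r g(s)\,\mathrm{d}s$. Your use of Arzel\`a--Ascoli to obtain continuity of $g$ is a cosmetic variant of the paper's bounded-convergence-plus-Lipschitz step and is equally valid.
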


\begin{proof}
Assumption (A1) implies that $\mathbb{P}\left(\omega: G(\omega)>0\right)=1$. Therefore, $\mathbb{P}\left(\omega: G(\omega)\geq\epsilon\right)\geq\frac{1}{2}$ for some $\epsilon>0$. For $\mathbb{P}$-a.e.\ $\omega$, there exists a sequence $(n_k)_{k\geq1} = (n_k(\omega))_{k\geq1}$ of integers such that $G(T_{n_k}\omega)\geq\epsilon$. (This follows from the ergodic theorem.) For every $r<r_c$, $k\geq1$ and $\mathbb{P}$-a.e.\ $\omega$,
$$\left|\lambda_{n_k}'(r,\omega)-E_o^{r,\omega}\left[\tau_{n_k}\right]\right|\leq\frac{W_1(r)}{G(T_{n_k}\omega)}\leq\epsilon^{-1}W_1(r)$$ by (\ref{cirkinsonja}). Thus,
$$\lim_{k\to\infty}\frac{1}{n_k}\lambda_{n_k}'(r,\omega)=\lim_{k\to\infty}\frac{1}{n_k}E_o^{r,\omega}\left[\tau_{n_k}\right]=g(r)=\mathbb{E}\left[\lim_{x\to-\infty}P_x^{r,\omega}\left(X_{\tau_o}=0\right)E_o^{r,\omega}\left[\tau_1\right]\right]$$ where the last two equalities follow from Lemma \ref{bugeceler}.

Given any $r<r_c$, pick $r_1, r_2\in\mathbb{R}$ such that $r_1<r<r_2<r_c$. For $\mathbb{P}$-a.e.\ $\omega$, note that
$$\sup_{r_1\leq s\leq r_2 \atop k\geq1}\frac{1}{n_k}\lambda_{n_k}'(s,\omega)\leq\sup_{r_1\leq s\leq r_2 \atop k\geq1}\left(H_1(s) + \frac{1}{n_k}\epsilon^{-1}W_1(s)\right)<\infty.$$ Therefore, the bounded convergence theorem implies that $$\lambda(r)-\lambda(r_1)=\lim_{k\to\infty}\frac{1}{n_k}\left(\lambda_{n_k}(r,\omega)-\lambda_{n_k}(r_1,\omega)\right)=\lim_{k\to\infty}\int_{r_1}^r\frac{1}{n_k}\lambda_{n_k}'(s,\omega)\mathrm{d}s=\int_{r_1}^rg(s)\mathrm{d}s.$$ It is easy to see that $g(\cdot)$ is Lipschitz continuous at $r$ since $$\sup_{r_1\leq s\leq r_2 \atop k\geq1}\frac{1}{n_k}\lambda_{n_k}''(s,\omega)<\infty$$ by (\ref{cirkinmarina}). The desired result follows from the fundamental theorem of calculus. %$\lambda(\cdot)$ is differentiable at $r$ with $\lambda'(r)=g(r)$.
\end{proof}

%Note that $s\mapsto g(s)$ is non-decreasing since it is the limit of the derivatives of convex functions. Therefore, it can only 

% If it is continuous, then we have the desired result by the fundamental theorem of calculus.

%Suppose $s\to\mathbb{E}\left[V_s(\omega)E_o^{s,\omega}\left[\tau_1\right]\right]$ is not continuous at $r$. Then, $$\sup_{s<r}\lim_{k\to\infty}\frac{1}{n_k}\lambda_{n_k}'(s,\omega)<\inf_{s>r}\lim_{k\to\infty}\frac{1}{n_k}\lambda_{n_k}'(s,\omega).$$ However, this is not possible since $$\sup_{k\geq1 \atop r_1\leq s\leq r_2}\lambda''(s)<\infty.$$

\subsection{Verification of the Ansatz}\label{fluxdensity}

\begin{lemma}\label{cakabeycan}
For every $r<r_c$, $$P_o^r\left(\lim_{n\to\infty}\frac{\tau_n}{n}=\lambda'(r)\right)=1.$$
\end{lemma}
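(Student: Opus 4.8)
The plan is to establish the law of large numbers for the passage times $\tau_n$ under the tilted measure $P_o^r$ via a Birkhoff-type ergodic argument, identifying the limit with $\lambda'(r)$. First I would invoke the ellipticity bound (\ref{tombala}) together with Lemma \ref{kayahan}: since $\hat\pi_r(\cdot,\pm1)\geq(\delta\mathrm{e}^r)^2$ $\mathbb{P}$-a.s.\ and $P_o^r(\tau_n<\infty)=1$ for every $n$, the environment Markov chain associated with $\hat\pi_r$ falls under the scope of Lemma \ref{Kozlov}, provided we know it admits an invariant probability measure $\phi_r\,\mathrm{d}\mathbb{P}$ absolutely continuous with respect to $\mathbb{P}$; this existence is exactly what the surrounding text promises to construct before Lemma \ref{cakabeycan} is used (via the functions $P_x^{r,\omega}(X_{\tau_o}=0)$ appearing in Lemma \ref{bugeceler}). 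Assuming that, Lemma \ref{Kozlov}(b) gives that the chain started from $\phi_r\,\mathrm{d}\mathbb{P}$ is stationary and ergodic.

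Next I would decompose $\tau_n=\sum_{i=1}^n(\tau_i-\tau_{i-1})$. The increment $\tau_i-\tau_{i-1}$ is not quite a function of the environment chain at a fixed time because $X_{\tau_{i-1}}$ can overshoot level $i-1$; this is the usual nuisance for bounded-jump walks. The clean way around it is to note that, by Lemma \ref{mavisap}, $E_o^{r,\omega}[\tau_1^m]\leq H_m(r)<\infty$ uniformly in $\omega$, so the increments have uniformly bounded moments, and by Lemma \ref{bugeceler} the Cesàro averages $\frac1n E_o^{r,\omega}[\tau_n]$ converge $\mathbb{P}$-a.s.\ to the deterministic constant $g(r)$. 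Combining the quenched-mean convergence with an $L^2$ (or $L^1$) control of the fluctuations of $\tau_n$ around its quenched mean — here one uses the correlation-decay estimates already encoded in the proof of the bound (\ref{cirkinmarina}), namely $E_o^{r,\omega}[(\tau_i-\tau_{i-1})(\tau_j-\tau_{j-1})]\leq E_o^{r,\omega}[\tau_i-\tau_{i-1}]E_o^{r,\omega}[\tau_j-\tau_{j-1}]+(H_1(r))^2c(r)^{j-i-(B-1)}$, which makes $\mathrm{Var}_o^{r,\omega}(\tau_n)=O(n)$ — yields $\tau_n/n\to g(r)$ in $P_o^{r,\omega}$-probability for $\mathbb{P}$-a.e.\ $\omega$, and then a Borel--Cantelli argument along a subsequence $n_k\sim k^2$ together with the monotonicity of $n\mapsto\tau_n$ upgrades this to $P_o^{r,\omega}$-a.s.\ convergence. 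Taking the semidirect product gives $P_o^r(\tau_n/n\to g(r))=1$.

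Finally I would identify $g(r)=\lambda'(r)$, which is precisely the content of Lemma \ref{sakinnazik}: $\lambda'(r)=g(r)=\mathbb{E}[\lim_{x\to-\infty}P_x^{r,\omega}(X_{\tau_o}=0)\,E_o^{r,\omega}[\tau_1]]$. Substituting this completes the proof. The main obstacle, and the step deserving the most care, is the fluctuation control: unlike the nearest-neighbor case where $\tau_n-\tau_{n-1}$ is an honest stationary sequence, for bounded jumps one must handle the overshoot and the ensuing weak (geometrically decaying) dependence between increments — but all the needed inputs (uniform moment bounds from Lemma \ref{mavisap}, exponential correlation decay from Lemma \ref{muhacir} via $c(r)<1$, and the quenched-mean convergence from Lemma \ref{bugeceler}) are already available, so the argument is a matter of assembling them rather than proving anything genuinely new. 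A secondary point to be careful about is that Lemma \ref{cakabeycan} is invoked at a stage where $\phi_r$ has (in the paper's intended order) just been constructed, so the appeal to Lemma \ref{Kozlov} is legitimate; if one preferred to avoid relying on $\phi_r$ at all, the subsequence-plus-monotonicity argument above is self-contained modulo Lemmas \ref{mavisap}, \ref{bugeceler} and \ref{sakinnazik}.
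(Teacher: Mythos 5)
Your second, self-contained argument is correct, but it is genuinely different from the paper's proof. The paper does not touch second moments of $\tau_n$ at all: it computes the quenched logarithmic moment generating function of $\tau_n$ under the tilted measure, $\lim_n\frac1n\log E_o^{r,\omega}\left[\mathrm{e}^{s\tau_n}\right]=\lambda(r+s)-\lambda(r)$ for $s<r_c-r$ (via the change of measure through $u_r$, (\ref{cangorecek}) and Lemma \ref{ongbak}), and then applies an exponential Chebyshev bound together with the differentiability of $\lambda$ at $r$ (Lemma \ref{sakinnazik}) to get \emph{exponentially} small quenched probabilities for $\left\{\left|\tau_n/n-\lambda'(r)\right|>\epsilon\right\}$, so Borel--Cantelli applies along the full sequence with no subsequence or monotonicity trick. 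Your route instead assembles the variance bound $\mathrm{Var}_o^{r,\omega}(\tau_n)=O(n)$ from the covariance estimate inside the proof of (\ref{cirkinmarina}) and Lemma \ref{mavisap}, combines it with the quenched-mean convergence of Lemma \ref{bugeceler} and the identification $g(r)=\lambda'(r)$ of Lemma \ref{sakinnazik}, and upgrades convergence in probability to a.s.\ convergence via Borel--Cantelli along $n_k\sim k^2$ and monotonicity of $n\mapsto\tau_n$; this works, is more elementary (only second moments), but requires extracting estimates that the paper only proves en route to (\ref{cirkinmarina}), and it is slightly heavier than the paper's three-line Cram\'er-type bound. Your first suggested route, through Kozlov's lemma and the invariant density $\phi_r$, should be dropped rather than merely flagged: besides the ordering issue ($\phi_r$ is constructed only after this lemma via Theorem \ref{density}), Kozlov's LLN identifies the velocity as $\int\sum_z\hat{\pi}_r(\omega,z)z\,\phi_r\,\mathrm{d}\mathbb{P}$, and equating that constant with $(\lambda'(r))^{-1}$ is precisely what Lemma \ref{cakabeycan} is later used to establish, so that route as sketched leaves the identification of the limit unproven.
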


\begin{proof}
For every $r<r_c$, $s<r_c-r$ and $\mathbb{P}$-a.e.\ $\omega$,
\begin{align*}
\lim_{n\to\infty}\frac{1}{n}\log E_o^{r,\omega}\left[\mathrm{e}^{s\tau_n}\right]&=\lim_{n\to\infty}\frac{1}{n}\log E_o^\omega\left[\mathrm{e}^{(r+s)\tau_n}u_r\left(\omega,X_{\tau_n}\right),\tau_n<\infty\right]\\
&=\lim_{n\to\infty}\frac{1}{n}\log E_o^\omega\left[\mathrm{e}^{(r+s)\tau_n},\tau_n<\infty\right] + \lim_{n\to\infty}\frac{1}{n}\log u_r\left(\omega,n\right)\\
&=\lambda(r+s)-\lambda(r)
\end{align*} where the last equality follows from Lemma \ref{ongbak}.

For every $\epsilon>0$, a standard application of Chebyshev's inequality shows that
\begin{align*}
\limsup_{n\to\infty}\frac{1}{n}\log P_o^{r,\omega}\left(\frac{\tau_n}{n}-\lambda'(r)>\epsilon\right)&\leq\limsup_{n\to\infty}\frac{1}{n}\log E_o^{r,\omega}\left[\mathrm{e}^{s\tau_n}\right] - s\left(\lambda'(r)+\epsilon\right)\\
&=\lambda(r+s)-\lambda(r)-s\left(\lambda'(r)+\epsilon\right)<0
\end{align*} when $s>0$ is small enough. Similarly, $$\limsup_{n\to\infty}\frac{1}{n}\log P_o^{r,\omega}\left(\frac{\tau_n}{n}-\lambda'(r)<-\epsilon\right)<0\quad\mbox{and}\quad\limsup_{n\to\infty}\frac{1}{n}\log P_o^{r,\omega}\left(\left|\frac{\tau_n}{n}-\lambda'(r)\right|>\epsilon\right)<0.$$ Since $\epsilon>0$ is arbitrary, the Borel--Cantelli lemma implies the desired result.
\end{proof}

\begin{lemma}
$F_r:\Omega\times\mathcal{R}\to\mathbb{R}$, defined by $F_r(\omega,z):=\log u_r(\omega,z) + z\lambda(r)$ for each $z\in\mathcal{R}$, is in class $\mathcal{K}$.
\end{lemma}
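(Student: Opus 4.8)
The plan is to verify the three defining properties of class $\mathcal{K}$ (Definition \ref{K}, with $d=1$) one at a time, exploiting the multiplicative cocycle structure of $u_r$ recorded in (\ref{feyziogluyla})--(\ref{cangorecek}) together with the identification $\lambda(r)=-\mathbb{E}\left[\log u_r(\cdot,1)\right]$ from Lemma \ref{ongbak}. Note first that $u_r(\omega,0)=\lim_n u_{r,n}(\omega,0)=1$, so (\ref{feyziogluyla}) with $x=0$ reads $u_r(\omega,x+z)=u_r(\omega,x)\,u_r(T_x\omega,z)$ and in particular $u_r(\omega,z)=u_r(T_0\omega,z)$.

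For the moment condition I would set $x=0$ in (\ref{cangorecek}) to get $(\delta\mathrm{e}^r)^{|z|}\leq u_r(\omega,z)\leq(\delta\mathrm{e}^r)^{-|z|}$ for $\mathbb{P}$-a.e.\ $\omega$, hence $|\log u_r(\omega,z)|\leq|z|\,(-\log\delta-r)\leq B(-\log\delta-r)$ and $|F_r(\omega,z)|\leq B(-\log\delta-r)+B|\lambda(r)|$. Thus $F_r(\cdot,z)$ is bounded and lies in $L^{1+\alpha}(\mathbb{P})$ for every $\alpha>0$; the moment condition is immediate.

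For the mean-zero condition I would use the additive form of the cocycle: by (\ref{feyziogluyla}), $\log u_r(\omega,m)=\sum_{i=0}^{m-1}\log u_r(T_i\omega,1)$ for every integer $m\geq1$, so stationarity of $\mathbb{P}$ and Lemma \ref{ongbak} give $\mathbb{E}[\log u_r(\cdot,m)]=m\,\mathbb{E}[\log u_r(\cdot,1)]=-m\lambda(r)$; and for $z<0$, applying (\ref{feyziogluyla}) with $x=z$ yields $u_r(T_z\omega,-z)=u_r(\omega,0)/u_r(\omega,z)=1/u_r(\omega,z)$, so $\mathbb{E}[\log u_r(\cdot,z)]=-\mathbb{E}[\log u_r(\cdot,-z)]=-z\lambda(r)$. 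Either way $\mathbb{E}[F_r(\cdot,z)]=\mathbb{E}[\log u_r(\cdot,z)]+z\lambda(r)=0$ for every $z\in\mathcal{R}$.

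For the closed-loop condition I would first fix a full-measure set of $\omega$ (a countable intersection over $(x,z)\in\mathbb{Z}\times\mathcal{R}$) on which $u_r(\omega,x+z)=u_r(\omega,x)\,u_r(T_x\omega,z)$ holds for all $x,z$. Then for any finite sequence $(x_k)_{k=0}^n$ with $x_{k+1}-x_k\in\mathcal{R}$ and $x_0=x_n$,
\[
\sum_{k=0}^{n-1}F_r(T_{x_k}\omega,x_{k+1}-x_k)=\sum_{k=0}^{n-1}\bigl(\log u_r(\omega,x_{k+1})-\log u_r(\omega,x_k)\bigr)+\lambda(r)\sum_{k=0}^{n-1}(x_{k+1}-x_k),
\]
and both sums telescope to $0$ since $x_n=x_0$. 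There is no serious obstacle here; the only mildly delicate point is that mean-zero must be checked for \emph{every} $z\in\mathcal{R}$, not merely $z=\pm1$, which is exactly why the additive cocycle representation of $\log u_r$ is invoked. Beyond that the argument is bookkeeping with the bounds already established for $u_r$.
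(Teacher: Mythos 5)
Your proof is correct and follows essentially the same route as the paper: the moment condition from the bounds in (\ref{cangorecek}), the mean-zero condition from the multiplicative cocycle structure together with stationarity and Lemma \ref{ongbak}, and the closed-loop condition from the same cocycle identity via telescoping. Your reflection trick for negative $z$ is just a minor variant of the paper's ``the case $-B\leq z\leq-1$ is similar.''
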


\begin{proof}
For each $z\in\mathcal{R}$ and $\mathbb{P}$-a.e.\ $\omega$, $$\left|F_r(\omega,z)\right|\leq\left|\log u_r(\omega,z)\right| + \left|z\lambda(r)\right|\leq B\left(-\log\left(\delta\mathrm{e}^r\right)+\left|\lambda(r)\right|\right)<\infty.$$ Therefore, $F_r$ satisfies the moment condition of Definition \ref{K}. The closed loop condition follows immediately from (\ref{cangorecek}). Finally, if $1\leq z\leq B$, then
$$\mathbb{E}\left[\log u_r(\omega,z)\right]=\mathbb{E}\left[\log\left(\prod_{i=0}^{z-1}u_r(T_i\omega,1)\right)\right]= \sum_{i=0}^{z-1}\mathbb{E}\left[\log u_r(T_i\omega,1)\right]=-z\lambda(r).$$
(The case $-B\leq z\leq -1$ is similar.) Hence, $F_r$ satisfies the mean zero condition as well.
\end{proof}

It follows easily from Lemma \ref{cakabeycan} that the LLN for the mean velocity of the particle holds with limiting velocity $(\lambda'(r))^{-1}$.
If there exists a $\phi_r\in L^1(\mathbb{P})$ such that $\phi_r\,\mathrm{d}\mathbb{P}$ is a $\hat{\pi}_r$-invariant probability measure, then $\mu_\xi\in M_1(\Omega\times\mathcal{R})$ with 
\begin{equation}\label{selimgelirmi}
\mathrm{d}\mu_\xi(\omega,z):=\hat{\pi}_r(\omega,z)\phi_r(\omega)\mathrm{d}\mathbb{P}(\omega)=\pi(0,z)\mathrm{e}^{-z\lambda(r)+F_r(\omega,z)+r}\phi_r(\omega)\mathrm{d}\mathbb{P}(\omega)
\end{equation} fits the Ansatz given in Lemma \ref{lagrange} for $\xi=(\lambda'(r))^{-1}$. The existence of such a $\phi_r$ is a corollary of the following general result which completes our construction.

\begin{theorem}\label{density}
Suppose $d=1$. If an environment kernel $\hat{\pi}:\Omega\times\mathcal{R}\to\mathbb{R}^+$ satisfies $\mathbb{P}\left(\omega: \hat{\pi}(\omega,\pm1)\geq\epsilon\right)=1$ for some $\epsilon>0$, and if $E_o^{\hat{\pi}}[\tau_1]<\infty$, then the following hold:
\begin{itemize}
\item[(a)] $\phi(\omega):=\lim_{x\rightarrow-\infty} E_x^{\hat{\pi},\omega}\left[\sum_{k=0}^\infty\one_{X_k=0}\right]\geq\epsilon^B$ exists for $\mathbb{P}$-a.e.\ $\omega$.
\item[(b)] $\phi\in L^1(\mathbb{P})$.
\item[(c)] $\mathbb{Q}\in M_1(\Omega)$, defined by $\mathrm{d}\mathbb{Q}(\omega):=\left({1}/{\left\|\phi\right\|_{L^1(\mathbb{P})}}\right)\phi(\omega)\mathrm{d}\mathbb{P}(\omega)$, is $\hat{\pi}$-invariant.
\end{itemize}
\end{theorem}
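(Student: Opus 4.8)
The plan is to represent \(\phi(\omega)\) as the quenched Green's function \(G^\omega(x,y):=E_x^{\hat\pi,\omega}\big[\sum_{k\ge0}\one_{X_k=y}\big]\) evaluated at \(0\), taken along \(x\to-\infty\), and to read off (a)--(c) from this together with the shift-invariance of \(\mathbb{P}\), the ellipticity \(\hat\pi(\cdot,\pm1)\ge\epsilon\), and the hypothesis \(E_o^{\hat\pi}[\tau_1]<\infty\). First note that \(E_o^{\hat\pi}[\tau_1]<\infty\) forces \(E_o^{\hat\pi,\omega}[\tau_1]<\infty\) for \(\mathbb{P}\)-a.e.\ \(\omega\), whence by shift-invariance \(E_y^{\hat\pi,\omega}[\tau_{y+1}]<\infty\) simultaneously for all \(y\in\mathbb{Z}\); thus \(\mathbb{P}\)-a.s.\ the walk reaches every level to its right, and in particular \(\tau_0<\infty\) \(P_x^{\hat\pi,\omega}\)-a.s.\ for every \(x<0\).

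The engine, which also yields (b), is a level-crossing decomposition. Write \(\phi_0(\omega):=G^\omega(0,0)\). Partitioning the time axis of the walk from \(0\) into the epochs \([\tau_m,\tau_{m+1})\), \(m\ge0\) (with \(\tau_0=0\)), using that on \(\{X_{\tau_m}>m\}\) this interval is empty and applying the strong Markov property at \(\tau_m\) gives, as an identity in \([0,\infty]\),
\[
\phi_0(\omega)=\sum_{m\ge0}P_0^{\hat\pi,\omega}(X_{\tau_m}=m)\,E_m^{\hat\pi,\omega}\big[\#\{k<\tau_{m+1}:X_k=0\}\big].
\]
The shift \(T_m\) identifies \(E_m^{\hat\pi,\omega}[\#\{k<\tau_{m+1}:X_k=0\}]\) with \(g_{T_m\omega}(-m)\), where \(g_\omega(j):=E_0^{\hat\pi,\omega}[\#\{k<\tau_1:X_k=j\}]\); hence, bounding the overshoot probability by \(1\) and using monotone convergence together with \(T_m\)-invariance of \(\mathbb{P}\),
\[
\mathbb{E}[\phi_0]\le\sum_{m\ge0}\mathbb{E}\big[g_{T_m\omega}(-m)\big]=\sum_{j\le0}\mathbb{E}[g_\omega(j)]=\mathbb{E}\big[E_0^{\hat\pi,\omega}[\tau_1]\big]=E_o^{\hat\pi}[\tau_1]<\infty ,
\]
where \(\sum_{j\le0}g_\omega(j)=E_0^{\hat\pi,\omega}[\tau_1]\) because the walk stays \(\le0\) before \(\tau_1\). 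In particular \(\phi_0(\omega)<\infty\) \(\mathbb{P}\)-a.s., so the walk from \(0\) returns to \(0\) with probability \(<1\) and \(G^\omega(z,0)=P_z^{\hat\pi,\omega}(\tau_0<\infty)\,\phi_0(\omega)\le\phi_0(\omega)<\infty\) for \(0\le z<B\).

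For (a): decomposing at \(\tau_0\) (the walk is \(<0\) before \(\tau_0\) and \(X_{\tau_0}\in\{0,\dots,B-1\}\)) gives \(G^\omega(x,0)=\sum_{z=0}^{B-1}P_x^{\hat\pi,\omega}(X_{\tau_0}=z)\,G^\omega(z,0)\) for \(x<0\), so it suffices that \(x\mapsto P_x^{\hat\pi,\omega}(X_{\tau_0}=z)\) has a limit at \(-\infty\) for each \(z\). The function \(v(y):=P_y^{\hat\pi,\omega}(X_{\tau_0}=z)\) for \(y<0\) and \(v(y):=\one_{y=z}\) for \(0\le y<B\) is \(\hat\pi\)-harmonic on \((-\infty,0)\), and the associated chain reaches \(0\) a.s.; Lemma \ref{muhacir} (with \(m=0\)) then gives \(|v(x)-v(x')|\le(1-\epsilon^B)^{-x/B}\) for \(x'<x<0\), so \((v(x))_{x<0}\) is Cauchy, converging to some \(p_z(\omega)\) with \(\sum_z p_z(\omega)=1\). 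Thus \(\phi(\omega)=\sum_{z=0}^{B-1}p_z(\omega)G^\omega(z,0)\) exists and is \(\le\phi_0(\omega)<\infty\). For the lower bound, if \(-B\le y\le-1\) then forcing \(|y|\) successive \(+1\)-steps gives \(P_y^{\hat\pi,\omega}(X_{\tau_0}=0)\ge\epsilon^{|y|}\ge\epsilon^B\); for \(x\le-B-1\), stop at \(\sigma:=\inf\{k:-B\le X_k\le-1\}\) (which satisfies \(\sigma<\tau_0<\infty\) since \(X_{\tau_0-1}\ge-B\)) and use the strong Markov property to get \(P_x^{\hat\pi,\omega}(X_{\tau_0}=0)\ge\epsilon^B\). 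Letting \(x\to-\infty\) yields \(p_0(\omega)\ge\epsilon^B\), and as \(G^\omega(0,0)\ge1\) we get \(\phi(\omega)\ge\epsilon^B\).

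For (c): summing over the last step before hitting \(0\), \(G^\omega(-N,0)=\sum_{z\in\mathcal{R}}\hat\pi(T_{-z}\omega,z)\,G^\omega(-N,-z)\) for \(N\ge1\); relabelling positions by \(-z\) identifies \(G^\omega(-N,-z)\) with \(G^{T_{-z}\omega}(-N+z,0)\), which tends to \(\phi(T_{-z}\omega)\) as \(N\to\infty\) by (a) applied in the environment \(T_{-z}\omega\). Since \(\mathcal{R}\) is finite, passing to the limit gives \(\phi(\omega)=\sum_{z\in\mathcal{R}}\hat\pi(T_{-z}\omega,z)\,\phi(T_{-z}\omega)\) for \(\mathbb{P}\)-a.e.\ \(\omega\), which is exactly the \(\hat\pi\)-invariance of \(\phi\,\mathrm{d}\mathbb{P}\); the engine gives \(\|\phi\|_{L^1(\mathbb{P})}=\mathbb{E}[\phi]\le\mathbb{E}[\phi_0]<\infty\) (this is (b)), and with \(\phi\ge\epsilon^B>0\) the normalised measure \(\mathbb{Q}\) is a well-defined \(\hat\pi\)-invariant probability. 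The main point requiring care is the engine: the level-epoch decomposition must be arranged so that bounded-jump overshoots contribute only the harmless factors \(P_0^{\hat\pi,\omega}(X_{\tau_m}=m)\le1\), and the resulting series must be matched termwise with \(\sum_{j\le0}\mathbb{E}[g_\omega(j)]=E_o^{\hat\pi}[\tau_1]\) using shift-invariance; after that, Lemma \ref{muhacir} does the analytic work in (a), and the rest is bookkeeping.
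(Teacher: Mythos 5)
Your proof is correct, and while parts (a) and (c) essentially retrace the paper's argument (Lemma \ref{muhacir} applied to functions harmonic in the starting point to produce the limit as $x\to-\infty$, and the last-step/shift identity $\phi(\omega)=\sum_{z\in\mathcal{R}}\hat{\pi}(T_{-z}\omega,z)\phi(T_{-z}\omega)$ for invariance), your part (b) takes a genuinely different route. The paper proves integrability via an ergodic-theorem argument: it identifies $\sum_{i=0}^{n-1}\phi(T_i\omega)$ with the expected occupation time of $[0,n-1]$ for the walk started from $-\infty$, bounds this by $\lim_{x\to-\infty}E_x^{\hat{\pi},\omega}[\tau_n-\tau_o]$ plus a boundary term $D(T_n\omega)$, and controls the latter along a subsequence on which $D(T_{n_j}\omega)$ stays bounded. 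You instead dominate $\phi$ pointwise by the Green's function $\phi_0(\omega)=G^\omega(0,0)$ and compute $\mathbb{E}[\phi_0]\le E_o^{\hat{\pi}}[\tau_1]$ directly through the epoch decomposition over $[\tau_m,\tau_{m+1})$, Tonelli, and stationarity of $\mathbb{P}$ alone; this avoids both the ergodic theorem and the boundary-correction/subsequence step, and yields the same bound $\left\|\phi\right\|_{L^1(\mathbb{P})}\le E_o^{\hat{\pi}}[\tau_1]$, at the price of having to establish finiteness of $\phi_0$ before running part (a). In (a) the paper works with the single hitting probability $\psi(\omega,x)=P_x^{\hat{\pi},\omega}(V_o<\infty)$ and the factorization $\phi(\omega,x)=\phi(\omega,0)\psi(\omega,x)$, whereas you work with the $B$ overshoot probabilities $P_x^{\hat{\pi},\omega}(X_{\tau_0}=z)$; both rest on Lemma \ref{muhacir} in the same way, and your lower bound $\phi\geq\epsilon^B$ matches the paper's. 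One small notational slip: for $0\le z<B$ you write $G^\omega(z,0)=P_z^{\hat{\pi},\omega}(\tau_0<\infty)\,\phi_0(\omega)$, where the relevant time is the hitting time of the site $0$ (the paper's $V_o$), not the passage time $\tau_0$ to level $0$, which is trivially finite for $z\ge0$; since you only use the inequality $G^\omega(z,0)\le\phi_0(\omega)$, which is correct, this does not affect the argument.
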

%\begin{remark}
%If $E_o^{\hat{\pi}}[\tau_{-1}']<\infty$, then we take $x\to\infty$ instead of $x\to-\infty$ in part (a).
%\end{remark}
\begin{remark}
Br\'emont \cite{Bremont} also shows the existence of a $\hat{\pi}$-invariant probability measure $\mathbb{Q}\ll\mathbb{P}$ in the case of ballistic random walk with bounded jumps on $\mathbb{Z}$. However, his argument is not elementary, assumes a stronger ellipticity condition, and does not provide a formula for the density. Rassoul-Agha \cite{Firas} takes an approach similar to ours, but resorts to Ces\`aro means and weak limits instead of showing the almost sure convergence in part (a) of Theorem \ref{density}, and assumes that the so-called Kalikow condition holds. For the related model of ``random walk on a strip", Roitershtein \cite{roiter} shows the existence of the ergodic invariant measure. It is easy to see that the natural analog of our formula works in that setting.
\end{remark}

\begin{proof}[Proof of Theorem \ref{density}]

Consider the hitting time $V_o:=\inf\{k\geq0:\,X_k=0\}$. For every $x\in\mathbb{Z}$ and $\omega\in\Omega$, let $\psi(\omega,x):=P_x^{\hat{\pi},\omega}(V_o<\infty)$. If $x\neq0$, then \[\psi(\omega,x)=\sum_{z\in\mathcal{R}}\hat{\pi}(T_x\omega,z)\psi(\omega,x+z).\] The function $\phi(\omega,x):=E_x^{\hat{\pi},\omega}\left[\sum_{k=0}^\infty\one_{X_k=0}\right]$ clearly satisfies $\phi(\omega,x)=\phi(\omega,0)\psi(\omega,x)$. Hence, \[\phi(\omega):=\lim_{x\rightarrow-\infty}\phi(\omega,x)=\phi(\omega,0)\lim_{x\rightarrow-\infty}\psi(\omega,x)\] exists for $\mathbb{P}$-a.e.\ $\omega$ by Lemma \ref{muhacir}. The ellipticity condition implies that $\mathbb{P}\left(\omega: \phi(\omega)\geq\epsilon^B\right)=1$. This proves part (a) of the theorem.

Let us now show that $\phi\in L^1(\mathbb{P})$. For every $n\geq1$ and $\mathbb{P}$-a.e.\ $\omega$,
\begin{align}
\sum_{i=0}^{n-1}\phi(T_i\omega)&=\sum_{i=0}^{n-1}\lim_{x\rightarrow-\infty}E_x^{\hat{\pi},T_i\omega}\left[\sum_{k=0}^\infty\one_{X_k=0}\right]=\sum_{i=0}^{n-1}\lim_{x\rightarrow-\infty}E_x^{\hat{\pi},\omega}\left[\sum_{k=0}^\infty\one_{X_k=i}\right]\nonumber\\
&=\lim_{x\rightarrow-\infty}E_x^{\hat{\pi},\omega}\left[\#\{k\geq0:\,0\leq X_k\leq n-1\}\right]\nonumber\\
&\leq\lim_{x\rightarrow-\infty}E_x^{\hat{\pi},\omega}\left[\tau_n-\tau_o\right]+\lim_{x\rightarrow-\infty}E_x^{\hat{\pi},\omega}\left[\#\{k\geq \tau_n:\,X_k\leq n-1\}\right]\nonumber\\
&=\lim_{x\rightarrow-\infty}E_x^{\hat{\pi},\omega}\left[\tau_n-\tau_o\right]+\lim_{x\rightarrow-\infty}E_x^{\hat{\pi},T_n\omega}\left[\#\{k\geq \tau_o:\,X_k\leq -1\}\right]\nonumber\\
&\leq\lim_{x\rightarrow-\infty}E_x^{\hat{\pi},\omega}\left[\tau_n-\tau_o\right]+\sup_{0\leq z<B}E_z^{\hat{\pi},T_n\omega}\left[\#\{k\geq0:\,X_k\leq -1\}\right].\label{asil}
\end{align} Here, $\#$ denotes the number of elements of a set. If $0\leq z<B$, then
\begin{align*}
&E_z^{\hat{\pi},\omega}\left[\#\{k\geq0:\,X_k\leq-1\}\right]=E_z^{\hat{\pi},\omega}\left[\#\{k\geq0:\,X_k\leq-1\},\bar{\tau}_{-1}<\infty\right]\\
&\quad\quad=\sum_{z'=-B}^{-1}P_z^{\hat{\pi},\omega}\left(\bar{\tau}_{-1}<\infty,X_{\bar{\tau}_{-1}}=z'\right)E_{z'}^{\hat{\pi},\omega}\left[\#\{k\geq0:\,X_k\leq-1\}\right]\\
&\quad\quad=\sum_{z'=-B}^{-1}P_z^{\hat{\pi},\omega}\left(\bar{\tau}_{-1}<\infty,X_{\bar{\tau}_{-1}}=z'\right)\left(E_{z'}^{\hat{\pi},\omega}\left[\tau_o\right]+E_{z'}^{\hat{\pi},\omega}\left[\#\{k\geq\tau_o:\,X_k\leq-1\}\right]\right)\\
&\quad\quad\leq\sum_{z'=-B}^{-1}P_z^{\hat{\pi},\omega}\left(\bar{\tau}_{-1}<\infty,X_{\bar{\tau}_{-1}}=z'\right)\left(E_{z'}^{\hat{\pi},\omega}\left[\tau_o\right]+\sup_{0\leq z^{''}<B}E_{z^{''}}^{\hat{\pi},\omega}\left[\#\{k\geq0:\,X_k\leq-1\}\right]\right)\\
&\quad\quad\leq P_z^{\hat{\pi},\omega}\left(\bar{\tau}_{-1}<\infty\right)\left(\sup_{-B\leq z'\leq-1}E_{z'}^{\hat{\pi},\omega}\left[\tau_o\right]+\sup_{0\leq z^{''}<B}E_{z^{''}}^{\hat{\pi},\omega}\left[\#\{k\geq0:\,X_k\leq-1\}\right]\right).
\end{align*}
Therefore, $$\sup_{0\leq z<B}E_z^{\hat{\pi},\omega}\left[\#\{k\geq0:\,X_k\leq -1\}\right]\leq\left(\frac{\sup_{0\leq z< B}P_z^{\hat{\pi},\omega}(\bar{\tau}_{-1}<\infty)}{1-\sup_{0\leq z< B}P_z^{\hat{\pi},\omega}(\bar{\tau}_{-1}<\infty)}\right)\sup_{-B\leq z'\leq-1}E_{z'}^{\hat{\pi},\omega}\left[\tau_o\right]=:D(\omega).$$
Since $\mathbb{P}\left(\omega: D(\omega)<\infty\right)=1$, there exists a $C<\infty$ such that $\mathbb{P}\left(\omega: D(\omega)\leq C\right)\geq\frac{1}{2}$. For $\mathbb{P}$-a.e.\ $\omega$, there exists a sequence $(n_j)_{j\geq1}=(n_j(\omega))_{j\geq1}$ of integers such that $D(T_{n_j}\omega)\leq C$. (This follows from the ergodic theorem.) By (\ref{asil}) and the ergodic theorem,

\begin{align*}
\left\|\phi\right\|_{L^1(\mathbb{P})}&=\lim_{j\rightarrow\infty}\frac{1}{n_j}\sum_{i=0}^{n_j-1}\phi(T_i\omega)\leq\lim_{j\rightarrow\infty}\frac{1}{n_j}\lim_{x\rightarrow-\infty}E_x^{\hat{\pi},\omega}\left[\tau_{n_j}-\tau_o\right]\\
&=\lim_{j\rightarrow\infty}\frac{1}{n_j}\lim_{x\rightarrow-\infty}\sum_{i=0}^{n_j-1}E_x^{\hat{\pi},\omega}\left[\tau_{i+1}-\tau_i\right]\leq\lim_{j\rightarrow\infty}\frac{1}{n_j}\sum_{i=0}^{n_j-1}E_o^{\hat{\pi},T_i\omega}\left[\tau_1\right]=E_o^{\hat{\pi}}\left[\tau_1\right]<\infty.
\end{align*} This proves part (b) of the theorem.

For every $x\neq0$ and $\mathbb{P}$-a.e.\ $\omega$, note that
\begin{align*}
&\sum_{z\in\mathcal{R}}E_{x+z}^{\hat{\pi},T_{-z}\omega}\left[\sum_{k=0}^\infty\one_{X_k=0}\right]\hat{\pi}(T_{-z}\omega,z)=\sum_{z\in\mathcal{R}}E_x^{\hat{\pi},\omega}\left[\sum_{k=0}^\infty\one_{X_k=-z}\right]\hat{\pi}(T_{-z}\omega,z)\\&=E_x^{\hat{\pi},\omega}\left[\sum_{k=0}^\infty\one_{X_{k+1}=0}\right]=E_x^{\hat{\pi},\omega}\left[\sum_{k=0}^\infty\one_{X_{k}=0}\right].
\end{align*}
Let $x\to-\infty$ and conclude that $$\sum_{z\in\mathcal{R}}\phi(T_{-z}\omega)\hat{\pi}(T_{-z}\omega,z)=\phi(\omega).$$ This proves part (c) of the theorem.
\end{proof}

\subsection{Explicit formula for the rate function}\label{formulabir}

\begin{proof}[Proof of Lemma \ref{lifeisrandom}]
For every $r<r_c$, $n\geq B+1$ and $\mathbb{P}$-a.e.\ $\omega$, $$\left(\delta\mathrm{e}^r\right)E_o^\omega\left[\mathrm{e}^{r\tau_1},\tau_1<\infty\right]\leq u_{r,n}(\omega,1)E_o^\omega\left[\mathrm{e}^{r\tau_1},\tau_1<\infty\right]\leq\left(\delta\mathrm{e}^r\right)^{-(B-1)}$$ where the first and the second inequalities follow from (\ref{isilam}) and (\ref{deyyus}), respectively. Thus, $$\mathbb{P}\left(\omega: E_o^\omega\left[\mathrm{e}^{r\tau_1},\tau_1<\infty\right]\leq\left(\delta\mathrm{e}^r\right)^{-B}\right)=1$$ for $r<r_c$, and also for $r=r_c$ by the monotone convergence theorem. Lemma \ref{ongbak} and (\ref{cangorecek}) are clearly valid for $r\leq r_c$, and $$\lambda(r):=\lim_{n\to\infty}\frac{1}{n}\log E_o^\omega\left[\mathrm{e}^{r\tau_n},\tau_n<\infty\right]=-\mathbb{E}\left[\log u_r(\cdot,1)\right]\leq-\log\left(\delta\mathrm{e}^r\right)<\infty.$$

Suppose $r>r_c$. Then, $E_o^\omega\left[\mathrm{e}^{r\tau_1},\tau_1<\infty\right]=\infty$ for $\mathbb{P}$-a.e.\ $\omega$. For every $n\geq B$,
\begin{align*}
E_o^\omega\left[\mathrm{e}^{r\tau_n},\tau_n<\infty\right]&=\sum_{z=1}^BE_o^\omega\left[\mathrm{e}^{r\tau_1},\tau_1<\infty,X_{\tau_1}=z\right]E_z^\omega\left[\mathrm{e}^{r\tau_n},\tau_n<\infty\right]\\
&\geq\sum_{z=1}^BE_o^\omega\left[\mathrm{e}^{r\tau_1},\tau_1<\infty,X_{\tau_1}=z\right]\left(\delta\mathrm{e}^r\right)^{n-z}\\
&\geq E_o^\omega\left[\mathrm{e}^{r\tau_1},\tau_1<\infty\right]\left(\delta\mathrm{e}^r\right)^{n-1}=\infty.
\end{align*}
Therefore, $\lambda(r):=\lim_{n\to\infty}\frac{1}{n}\log E_o^\omega\left[\mathrm{e}^{r\tau_n},\tau_n<\infty\right]=\infty$. This proves that $r\mapsto\lambda(r)$ is (i) deterministic, and (ii) finite precisely on $(-\infty,r_c]$. Note that $0\leq r_c\leq-\log\delta<\infty$.

The function $r\mapsto\lambda(r)$ is differentiable on $(-\infty,r_c)$ by Lemma \ref{sakinnazik}. Suppose there exist $r_1<r_c$ and $r_2<r_c$ such that $\lambda'(r_1)=\lambda'(r_2)$. Then, for $r=r_1$, the measure $\mu_\xi$ (defined in (\ref{selimgelirmi})) fits the Ansatz given in Lemma \ref{lagrange} for $\xi=(\lambda'(r_1))^{-1}$. The same is true for $r=r_2$. However, such a $\mu_\xi$ is unique by Lemma \ref{lagrange}. Therefore, $\mathbb{P}\left(\omega:u_{r_1}(\omega,1)=u_{r_2}(\omega,1)\right)=1$, $\lambda(r_1)=\lambda(r_2)$ and $r_1=r_2$. This proves that $r\mapsto\lambda(r)$ is strictly convex on $(-\infty,r_c)$.

For any $r<r_c$, Lemma \ref{cakabeycan} says that $P_o^r\left(\lim_{n\to\infty}\frac{\tau_n}{n}=\lambda'(r)\right)=1$. The function $r\mapsto\lambda'(r)$ is strictly increasing and the jumps of the walk under $P_o^r$ are bounded by $B$. Therefore, $\xi_c^{-1}=\lambda'(r_c-)>\lambda'(r)\geq B^{-1}$.
We have proved half of Lemma \ref{lifeisrandom}, namely the statements involving $r\mapsto\lambda(r)$. As usual, we leave the proof of the other half to the reader.

What remains to be shown is that the same $r_c$ works for $\lambda(\cdot)$ and $\bar{\lambda}(\cdot)$. This is proved in Appendix C.
\end{proof}

\begin{proof}[Proof of Theorem \ref{explicitformulah}]
For every $r<r_c$,
\begin{align}
\lambda(r)&=\lim_{n\to\infty}\frac{1}{nB}\log E_o^\omega\left[\mathrm{e}^{r\tau_{nB}},\tau_{nB}<\infty\right]\geq\lim_{n\to\infty}\frac{1}{nB}\log E_o^\omega\left[\mathrm{e}^{r\tau_{nB}},X_{n+1}=nB\right]\label{kafatek}\\
&\geq\lim_{n\to\infty}\frac{1}{nB}\log\left(\mathrm{e}^{rn-|r|}P_o^\omega\left(X_{n+1}=nB\right)\right)=B^{-1}\left(r-I(B)\right).\label{kafacift}
\end{align}
In (\ref{kafatek}), $X_{n+1}$ is used instead of $X_n$ in order to avoid problems when $\mathbb{P}$ is not ergodic under $T_B$ (e.g.\ when the environment is $B$-periodic.)
The function $r\mapsto\lambda(r)$ is strictly convex and differentiable on $(-\infty,r_c)$ by Lemma \ref{lifeisrandom}.
Since $\lambda'(r)\geq B^{-1}$, (\ref{kafacift}) implies that $\lim_{r\to-\infty}\lambda'(r)=B^{-1}$.

For every $\xi\in(\xi_c,B)$, there exists a unique $r=r(\xi)\in(-\infty,r_c)$ such that $\xi^{-1}=\lambda'(r)$. Lemma \ref{lagrange} implies that the measure $\mu_\xi$ (given in (\ref{selimgelirmi})) is the unique minimizer of (\ref{level1ratetilde}). Therefore, $$I(\xi) = \mathfrak{I}(\mu_\xi)=r(\xi)-\xi\lambda(r(\xi))$$ by (\ref{sifirladik}). Since $\lambda'(r(\xi))=\xi^{-1}$, it is clear that $$I(\xi)=\sup_{r\in\mathbb{R}}\left\{r-\xi\lambda(r)\right\}=\xi\sup_{r\in\mathbb{R}}\left\{r\xi^{-1}-\lambda(r)\right\}=\xi\lambda^{*}(\xi^{-1}).$$
By convex duality, $\xi\mapsto I(\xi)$ is strictly convex and differentiable on $(\xi_c,B)$.

If $\xi_c=0$, then we have identified $I(\cdot)$ on $(0,B)$. Let us now suppose $\xi_c>0$. Note that $$I'(\xi)=\frac{\mathrm{d}}{\mathrm{d}\xi}[r(\xi)-\xi\lambda(r(\xi))]=r'(\xi)-\lambda(r(\xi))-\xi\lambda'(r(\xi))r'(\xi)=-\lambda(r(\xi)).$$ Therefore, $I(\xi_c)-\xi_cI'(\xi_c+)=r_c$. This implies by convexity that $I(0)\geq r_c$. On the other hand, $$E_o^\omega[\mathrm{e}^{r\tau_1},\tau_1<\infty]=\sum_{k=1}^{\infty}\mathrm{e}^{rk}P_o^\omega(\tau_1=k)\leq\sum_{k=1}^{\infty}\mathrm{e}^{rk}P_o^\omega(1\leq X_k\leq B)\leq\sum_{k=1}^{\infty}\mathrm{e}^{(r-I(0))k + o(k)}<\infty$$ for any $r<I(0)$. Hence, $r_c = I(0)$. The equality $I(\xi_c)-\xi_cI'(\xi_c+)=I(0)$ forces $I(\cdot)$ to be affine linear on $[0,\xi_c]$ with a slope of $I'(\xi_c+)$. In particular, $\xi\mapsto I(\xi)$ is differentiable on $(0,B)$.

Still supposing $\xi_c>0$, fix $\xi\in(0,\xi_c]$. Then, $\frac{\mathrm{d}}{\mathrm{d}r}\left(r-\xi\lambda(r)\right)>0$ for every $r<r_c$. Therefore, $$\sup_{r\in\mathbb{R}}\left\{r-\xi\lambda(r)\right\}=r_c-\xi\lambda(r_c)=I(0)+\xi I'(\xi_c+)=I(\xi).$$ In short, $I(\xi)=\sup_{r\in\mathbb{R}}\left\{r-\xi\lambda(r)\right\}$ for every $\xi\in(0,B)$.

Let us no longer suppose $\xi_c>0$. At $\xi=B$,
\begin{align*}
I(B)&=\lim_{\xi\to B-}I(\xi)=\lim_{\xi\to B-}\left[r(\xi)-\xi\lambda(r(\xi))\right]=\lim_{r\to-\infty}\left[r-\frac{\lambda(r)}{\lambda'(r)}\right]\\
&\leq\lim_{r\to-\infty}\left[r-B\lambda(r)\right]\leq\sup_{r\in\mathbb{R}}\left\{r-B\lambda(r)\right\}\leq I(B).
\end{align*}
Here, the last inequality follows from (\ref{kafacift}). It is easy to check that $I(\xi)=\sup_{r\in\mathbb{R}}\left\{r-\xi\lambda(r)\right\}=\infty$ when $\xi>B$. This concludes the proof of Theorem \ref{explicitformulah} for $\xi\geq0$. The arguments regarding $\xi<0$ are similar.
\end{proof}

\begin{proof}[Proof of Proposition \ref{Wronskian}]
Suppose that the walk is nearest-neighbor. For every $\omega\in\Omega$, let $\rho(\omega):=\frac{\pi(0,-1)}{\pi(0,1)}$. For every $r<r_c$, recall that the function $u_r:\Omega\times\mathbb{Z}\to\mathbb{R}^+$ satisfies
\begin{equation}\label{saykool}
u_r(\omega,x)=\pi(x,x-1)\mathrm{e}^ru_r(\omega,x-1)+\pi(x,x+1)\mathrm{e}^ru_r(\omega,x+1)
\end{equation} and that $\lambda(r)=-\mathbb{E}\left[\log u_r(\cdot,1)\right]$. Replacing $(\tau_n)_{n\geq1}$ by $(\bar{\tau}_{-n})_{n\geq1}$ in the whole construction, one can similarly obtain a function $\bar{u}_r:\Omega\times\mathbb{Z}\to\mathbb{R}^+$ such that
\begin{equation}\label{saykoolmak}
\bar{u}_r(\omega,x)=\pi(x,x-1)\mathrm{e}^r\bar{u}_r(\omega,x-1)+\pi(x,x+1)\mathrm{e}^r\bar{u}_r(\omega,x+1)
\end{equation}
and $\bar{\lambda}(r)=\mathbb{E}\left[\log \bar{u}_r(\cdot,1)\right]$. Introduce
$$U_r(\omega,x):=\left(
\begin{array}{ll}
u_r(\omega,x+1)&\bar{u}_r(\omega,x+1)\\
u_r(\omega,x)&\bar{u}_r(\omega,x)
\end{array}
\right)$$ and the Wronskian $W_r(\omega,x) := \mathrm{det}\left(U_r(\omega,x)\right)$. 
By (\ref{saykool}) and (\ref{saykoolmak}),
$$U_r(\omega,x)=\left(
\begin{array}{ll}
\mathrm{e}^{-r}(1+\rho(T_x\omega))&-\rho(T_x\omega)\\
1&0
\end{array}
\right)U_r(\omega,x-1)$$ and $W_r(\omega,x)=\rho(T_x\omega)W_r(\omega,x-1)$. However, it follows from (\ref{cangorecek}) that
$$W_r(\omega,x)=u_r(\omega,x+1)\bar{u}_r(\omega,x)-u_r(\omega,x)\bar{u}_r(\omega,x+1)=u_r(\omega,x)\bar{u}_r(\omega,x)W_r(T_x\omega,0).$$
Therefore, at $x=1$,
\begin{align*}
\rho(T_1\omega)W_r(\omega,0)&=W_r(\omega,1)=u_r(\omega,1)\bar{u}_r(\omega,1)W_r(T_1\omega,0),\\
\log\rho(T_1\omega)+\log W_r(\omega,0)&=\log u_r(\omega,1)+\log\bar{u}_r(\omega,1)+\log W_r(T_1\omega,0).
\end{align*} Take $\mathbb{E}$-expectation to deduce that $\mathbb{E}\left[\log\rho(\cdot)\right]=\bar{\lambda}(r)-\lambda(r)$. In particular, $\bar{\lambda}(r)-\lambda(r)=\bar{\lambda}(0)-\lambda(0)$.

Hence, for every $\xi\in[-1,0)$, 
$$I(\xi)=\sup_{r\in\mathbb{R}}\left\{r+\xi\bar{\lambda}(r)\right\}=\sup_{r\in\mathbb{R}}\left\{r-(-\xi)\lambda(r)\right\}+\xi\cdot\mathbb{E}\left[\log\rho(\cdot)\right]=I(-\xi)+\xi\cdot\mathbb{E}\left[\log\rho(\cdot)\right].$$

In order to prove that such a symmetry is generally absent for walks with bounded jumps, let us provide a counterexample. Consider classical random walk on $\mathbb{Z}$. Let $p(z):=P_o(X_1=z)$ for every $z\in\mathbb{Z}$. Suppose $p(-2)=1/7$, $p(-1)=3/7$, $p(1)=1/7$ and $p(2)=2/7$. For $r<0$, it is easy to see that $\mathrm{e}^{-\lambda(r)}$ and $\mathrm{e}^{\bar{\lambda}(r)}$ are the two positive roots $x_r$ and $\bar{x}_r$ of the polynomial $2x^4 + x^3 -7\mathrm{e}^{-r}x^2 + 3x + 1$. By plugging in various values for $r$, one can check that $\bar{\lambda}(r)-\lambda(r)=\log(x_r\bar{x}_r)$ is not independent of $r$.
\end{proof}

\section*{Appendix A}

\begin{proposition}
For nearest-neighbor random walk on $\mathbb{Z}$ in a uniformly elliptic product environment, the function $\mathfrak{I}:M_1(\Omega\times\mathcal{R})\to\mathbb{R}^+$, given by (\ref{level2ratetilde}), is not lower semicontinuous. Hence, $\mathfrak{I}\neq\mathfrak{I}^{**}$.
\end{proposition}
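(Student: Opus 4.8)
The plan is to argue at the level of the velocity rate function, using the contraction $I(\xi)=\inf_{\mu\in A_\xi}\mathfrak{I}^{**}(\mu)=\inf_{\mu\in A_\xi}\mathfrak{I}(\mu)$ of Corollary \ref{level1LDP}. First I would record the soft fact that, since $\mathfrak{I}$ is convex and proper (it is $[0,\infty]$-valued and finite somewhere) and $M_1(\Omega\times\mathcal{R})$ is compact and metrizable, $\mathfrak{I}^{**}$ is exactly the lower semicontinuous hull of $\mathfrak{I}$; hence $\mathfrak{I}$ fails to be lower semicontinuous if and only if $\mathfrak{I}\neq\mathfrak{I}^{**}$. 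Moreover, if $\mathfrak{I}=\mathfrak{I}^{**}$, then since the infimum defining $I(\xi)$ through $\mathfrak{I}^{**}$ is attained for every $\xi$ (as noted after Corollary \ref{level1LDP}, using that $A_\xi$ is compact and $\mathfrak{I}^{**}$ lower semicontinuous), the infimum $\inf_{\mu\in A_\xi}\mathfrak{I}(\mu)$ would also be attained for every $\xi$. So it suffices to exhibit a uniformly elliptic product environment and a $\xi$ for which no $\mu\in A_\xi\cap M_{1,s}^{\ll}(\Omega\times\mathcal{R})$ achieves $\mathfrak{I}(\mu)=I(\xi)$; note that any $\mu$ with $\mathfrak{I}(\mu)<\infty$ automatically lies in $M_{1,s}^{\ll}(\Omega\times\mathcal{R})$.

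For this I would fix a non-degenerate uniformly elliptic i.i.d.\ environment that is transient and ballistic to the right, so that $r_c=I(0)>0$ and, by Theorem \ref{explicitformulah}(i), the affine piece $[0,\xi_c]$ of $I$ is non-degenerate, i.e.\ $\xi_c>0$ — the presence of this flat piece for genuinely random product environments is the phenomenon found in \cite{GdH} and \cite{CGZ}, and for a concrete example one can either cite those explicit computations or check $\lambda'(r_c-)<\infty$ directly. The key claim is then: at most one point of the open segment $(0,\xi_c)$ can be a minimizer of $\mathfrak{I}$ over its fibre. Suppose $\xi_0<\xi_1$ in $(0,\xi_c)$ both admit minimizers $\mu_i\in A_{\xi_i}\cap M_{1,s}^{\ll}(\Omega\times\mathcal{R})$, with associated pairs $(\hat\pi_i,\phi_i)=\bigl(\frac{\mathrm d\mu_i}{\mathrm d(\mu_i)^1},\frac{\mathrm d(\mu_i)^1}{\mathrm d\mathbb P}\bigr)$. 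Apply the interpolation of Lemma \ref{camilla} (with $f=0$, so that $H_0=-\mathfrak{I}$): for $t\in(0,1)$ it produces $(\hat\pi_t,\phi_t)$ with $\phi_t>0$ in $L^1(\mathbb P)$ and $\hat\pi_t(\cdot,\pm1)>0$ (being a convex combination of positive kernels), hence a well-defined $\mu_t\in M_{1,s}^{\ll}(\Omega\times\mathcal{R})$, and the identity $\phi_t\hat\pi_t=t\phi_1\hat\pi_1+(1-t)\phi_0\hat\pi_0$ gives $\xi_{\mu_t}=t\xi_1+(1-t)\xi_0\in(\xi_0,\xi_1)$.

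Now $\xi_0\neq\xi_1$ forces $\hat\pi_0\neq\hat\pi_1$ on a set of positive $\mathbb P$-measure: otherwise $\phi_0=\phi_1$ $\mathbb P$-a.s.\ by the uniqueness in Lemma \ref{Kozlov}(c), whence $\mu_0=\mu_1$ and $\xi_0=\xi_1$. Consequently the Jensen step in the proof of Lemma \ref{camilla}, which is the pointwise convexity inequality for the strictly convex map $s\mapsto s\log s$ followed by multiplication by $\phi_t>0$, is strict, so $\mathfrak{I}(\mu_t)<t\,\mathfrak{I}(\mu_1)+(1-t)\,\mathfrak{I}(\mu_0)=t\,I(\xi_1)+(1-t)\,I(\xi_0)=I(\xi_{\mu_t})$, the last equality being the affineness of $I$ on $[\xi_0,\xi_1]\subset[0,\xi_c]$ from Theorem \ref{explicitformulah}(i). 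But $I(\xi_{\mu_t})=\inf_{\nu\in A_{\xi_{\mu_t}}}\mathfrak{I}^{**}(\nu)\le\mathfrak{I}^{**}(\mu_t)\le\mathfrak{I}(\mu_t)$, a contradiction. Hence at most one point of $(0,\xi_c)$ admits a minimizer; since $(0,\xi_c)$ is a non-degenerate interval, some $\xi\in(0,\xi_c)$ has $\inf_{\mu\in A_\xi}\mathfrak{I}(\mu)$ not attained, and by the first paragraph $\mathfrak{I}\neq\mathfrak{I}^{**}$, so $\mathfrak{I}$ is not lower semicontinuous.

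I expect the main obstacle to be two-fold. The first is securing a uniformly elliptic product environment with $\xi_c>0$ (equivalently $\lambda'(r_c-)<\infty$), i.e.\ making sure the affine stretch is genuinely non-trivial; this is where I would lean on the explicit product-case analysis of \cite{GdH} and \cite{CGZ}, or carry out an elementary computation. The second is the rigorous upgrade of Lemma \ref{camilla} to a strict inequality — in particular verifying that the positive-$\mathbb P$-measure set on which $\hat\pi_0\neq\hat\pi_1$ is not annihilated by the $\phi_t$-weighting — which is in fact immediate since $\phi_t>0$ $\mathbb P$-a.s. Conceptually, what the argument is detecting is that any $\mathfrak{I}$-minimizing sequence in such a fibre $A_\xi$ has a weak limit that escapes $M_{1,s}^{\ll}(\Omega\times\mathcal{R})$ (the invariant densities degenerate, or the kernel loses strict positivity on $U$), so that $\mathfrak{I}$ jumps up in the limit; the route above has the advantage of not requiring one to identify that limit.
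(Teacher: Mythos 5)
Your core mechanism is sound and genuinely different from the paper's. The paper argues directly at the level of measures: it tilts the kernel by $a_n\uparrow a_\infty=\mathbb{E}[\rho]^{-1/2}$, uses \cite{Solomon} and \cite{alili} to get ballistic kernels $\hat{\pi}_n$ with invariant densities $\mathbb{Q}_n\ll\mathbb{P}$, and notes that the limiting kernel $\hat{\pi}_\infty$ is transient with zero speed, so any weak limit $\mathbb{Q}_\infty$ of the $\mathbb{Q}_{n_k}$ is $\hat{\pi}_\infty$-invariant but (by \cite{Bremont}) not absolutely continuous with respect to $\mathbb{P}$; hence $\mu_{n_k}\to\mu_\infty$ with $\mathfrak{I}(\mu_{n_k})$ bounded (uniform ellipticity) while $\mathfrak{I}(\mu_\infty)=\infty$. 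You instead detect the failure of lower semicontinuity through non-attainment of $\inf_{A_\xi}\mathfrak{I}$ on the affine stretch of $I$: interpolating two putative minimizers via Lemma \ref{camilla} with $f=0$, upgrading Jensen to a strict inequality (legitimate: Kozlov's Lemma \ref{Kozlov} gives $\phi_0,\phi_1>0$ $\mathbb{P}$-a.s.\ so $\gamma\in(0,1)$, and uniqueness of the invariant density forces $\hat{\pi}_0\neq\hat{\pi}_1$ on a set of positive measure), and contradicting the affineness of $I$ on $[0,\xi_c]$ from Theorem \ref{explicitformulah}; compactness of $A_\xi$ then rules out lower semicontinuity, and $\mathfrak{I}\neq\mathfrak{I}^{**}$ follows since $\mathfrak{I}^{**}$ is lsc. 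That "at most one fibre over the flat piece carries a minimizer" argument is correct, and it is a nice structural explanation of why the infimum in (\ref{level1ratetilde}) is not attained there.

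There are, however, two genuine gaps. First, the example is not actually secured: ballisticity does not give $r_c>0$ (nestling ballistic environments have $r_c=I(0)=0$), and Theorem \ref{explicitformulah}(i) does not assert $\xi_c>0$ --- it only says $I$ is affine on $[0,\xi_c]$, an interval that is degenerate e.g.\ for deterministic environments. You need a concrete environment with $\xi_c>0$, and your stated justification does not provide one. It is repairable: for a uniformly elliptic \emph{nestling} ballistic product environment, the quenched slowdown results of \cite{GdH} and \cite{CGZ} give $I\equiv0$ on $[0,v]$ with $v>0$ the LLN speed, which by the strict convexity of $I$ on $(\xi_c,1)$ forces $\xi_c\geq v>0$ --- but note this repair contradicts your claim $r_c>0$; in the non-nestling case you would instead have to prove $\lambda'(r_c-)<\infty$, which you do not do and which is false for deterministic environments, so it cannot be "soft". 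Second, scope: your argument proves only the existential form of the statement (there is some uniformly elliptic product environment for which $\mathfrak{I}$ fails to be lsc), because it has no traction when $\xi_c=\bar{\xi}_c=0$ (e.g.\ recurrent environments, or any case without a flat piece). The paper's construction applies to every non-deterministic uniformly elliptic product environment, which is the natural reading of the proposition. So as written your proposal establishes a weaker statement than the one claimed, even after the flat-piece example is properly justified.
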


\begin{proof}
Define $a_{\infty}:=\mathbb{E}[\rho]^{-1/2}$ where $\rho(\omega) := {\pi(0,-1)}/{\pi(0,1)}$ for every $\omega\in\Omega$. Given any sequence $(a_n)_{n\geq1}$ that is strictly increasing to $a_{\infty}$, introduce a sequence $(\hat{\pi}_n)_{n\geq1}$ of environment kernels by setting $$\hat{\pi}_n(\omega,-1) := \frac{a_n\pi(0,-1)}{a_n\pi(0,-1) + a_n^{-1}\pi(0,1)}\qquad\mbox{and}\qquad\hat{\pi}_n(\omega,1) := \frac{a_n^{-1}\pi(0,1)}{a_n\pi(0,-1) + a_n^{-1}\pi(0,1)}$$ for $1\leq n\leq\infty$.

When $1\leq n<\infty$, $\rho_n(\omega) := {\hat{\pi}_n(\omega,-1)}/{\hat{\pi}_n(\omega,1)}$ satisfies $\mathbb{E}[\rho_n]=a_n^2\mathbb{E}[\rho]={a_n^2}/{a_{\infty}^2}<1$. It follows from \cite{Solomon} that the LLN holds under the environment kernel $\hat{\pi}_n$, and the limiting velocity is positive. By \cite{alili}, there exists a $\hat{\pi}_n$-invariant probability measure $\mathbb{Q}_n\ll\mathbb{P}$. Let us define $\mu_n\in M_1(\Omega\times U)$ by $\mathrm{d}\mu_n(\omega,z):=\mathrm{d}\mathbb{Q}_n(\omega)\hat{\pi}_n(\omega,z)$. Then, $\mu_n\in M_{1,s}^{\ll}(\Omega\times U)$.

The case $n=\infty$ is different since $\rho_\infty(\omega) := {\hat{\pi}_\infty(\omega,-1)}/{\hat{\pi}_\infty(\omega,1)}$ satisfies $\mathbb{E}[\rho_\infty]=1$. By Jensen's inequality, $\mathbb{E}[\log\rho_\infty]<\log\mathbb{E}[\rho_\infty]=0$. Therefore, the walk under the environment kernel $\hat{\pi}_\infty$ is transient to the right, but the limiting velocity is zero. (See \cite{Solomon}.)

$M_1(\Omega)$ is weakly compact. There exists a subsequence $(\mathbb{Q}_{n_k})_{k\geq1}$ of $(\mathbb{Q}_n)_{n\geq1}$ that converges to some $\mathbb{Q}_\infty\in M_1(\Omega)$. Define $\mu_\infty\in M_1(\Omega\times U)$ by $\mathrm{d}\mu_\infty(\omega,z):=\mathrm{d}\mathbb{Q}_\infty(\omega)\hat{\pi}_\infty(\omega,z)$. Clearly, $\mu_{n_k}$ converges weakly to $\mu_\infty$. Also, $(\mu_\infty)^1=(\mu_\infty)^2=\mathbb{Q}_\infty$, i.e., $\mathbb{Q}_\infty$ is $\hat{\pi}_\infty$-invariant. However, since the walk under the environment kernel $\hat{\pi}_\infty$ is transient but not ballistic, $\mathbb{Q}_\infty$ is not absolutely continuous relative to $\mathbb{P}$. (See \cite{Bremont}.) Therefore, $\mu_\infty\not\in M_{1,s}^{\ll}(\Omega\times U)$. By (\ref{level2ratetilde}), $\mathfrak{I}(\mu_\infty)=\infty$. On the other hand, it is easy to see that $$\lim_{k\to\infty}\mathfrak{I}(\mu_{n_k})=\int\sum_{z\in U}\hat{\pi}_\infty(\omega,z)\log\frac{\hat{\pi}_\infty(\omega,z)}{\pi(0,z)}\mathrm{d}\mathbb{Q}_\infty(\omega)$$ which is finite by the uniform ellipticity assumption. This proves that $\mathfrak{I}$ is not lower semicontinuous.
\end{proof}

\begin{remark}
In the case of random walk on $\mathbb{Z}^d$ in an elliptic periodic environment, $\Omega$ has finitely many elements. Therefore, $M_1(\Omega\times\mathcal{R})$ is finite-dimensional. Ellipticity ensures that $\mathfrak{I}$ is finite on $M_1(\Omega\times\mathcal{R})$. Note that a convex function on a finite-dimensional space is continuous whenever it is finite. Hence, $\mathfrak{I}$ is continuous on $M_1(\Omega\times\mathcal{R})$.
\end{remark}

\section*{Appendix B}

\begin{proof} [Sketch of the proof of Lemma \ref{GRR}]
For every $F\in\mathcal{K}$, $y\in\mathbb{Z}^d$ and $\omega\in\Omega$, let $f(\omega,y):=\sum_{i=0}^{j-1}F(T_{y_i}\omega,y_{i+1}-y_i)$ where $(y_{i})_{i=0}^j$ is any sequence in $\mathbb{Z}^d$ with $y_o=0$, $y_j=y$ and $y_{i+1}-y_i\in\mathcal{R}$. The closed loop condition (given in Definition \ref{K}) ensures that $f:\Omega\times\mathbb{Z}^d\to\mathbb{R}$ is well defined. Extend $f$ to $\Omega\times\mathbb{R}^d$ via interpolation. For every $n\geq1$, define $g_n:\mathbb{R}^d\to\mathbb{R}$ by $g_n(t):=f(\omega,nt)/n$.

The crucial step is to show that $(g_n)_{n\geq1}$ is equicontinuous and hence compact. This is accomplished by estimating the modulus of continuity of $g_n$ from the moment condition in Definition \ref{K} via a theorem of Garsia, Rodemich and Rumsey (given in \cite{SV79}.) Once equicontinuity is established, the mean zero condition and the ergodic theorem are used to prove that $(g_n)_{n\geq1}$ converges uniformly to zero on bounded sets. This immediately implies the desired result. See Chapter 2 of \cite{jeffrey} for the complete proof.
\end{proof}

\section*{Appendix C}

\begin{proposition}
Suppose that $d=1$. For every $r\in\mathbb{R}$, $\mathbb{P}\left(\omega: E_o^\omega[\mathrm{e}^{r\tau_1}, \tau_1<\infty]<\infty\right)=1$ if and only if $\mathbb{P}\left(\omega: E_o^\omega[\mathrm{e}^{r\bar{\tau}_{-1}}, \bar{\tau}_{-1}<\infty]<\infty\right)=1$.
\end{proposition}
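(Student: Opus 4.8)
The plan is to prove that the finiteness of the right-passage-time exponential moment $E_o^\omega[\mathrm{e}^{r\tau_1},\tau_1<\infty]$ is $\mathbb{P}$-a.s.\ equivalent to the finiteness of the left analogue. By the zero-one arguments already established (the $T$-invariance of $\{\omega:\zeta(r,\omega)<\infty\}$ in Section \ref{dolapdere} and the corresponding statement for the left passage times), it suffices to produce, for each fixed $r\in\mathbb{R}$, a $T$-invariant event of full measure on which finiteness of one exponential moment implies finiteness of the other; then a symmetric argument gives the reverse implication. Since both functions $r\mapsto\lambda(r)$ and $r\mapsto\bar\lambda(r)$ are nondecreasing and jump to $+\infty$ past their respective critical values $r_c$ and $\bar r_c$, the statement is equivalent to $r_c=\bar r_c$, so I would phrase the argument as showing $\bar r_c\ge r_c$ and, by symmetry, $r_c\ge\bar r_c$.

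First I would fix $r<r_c$, so that $\zeta(r,\cdot)<\infty$ $\mathbb{P}$-a.s., and recall from Section \ref{dolapdere} that one can then build the tilted environment kernel $\hat\pi_r$ via the limit $u_r(\omega,x)=\lim_n u_{r,n}(\omega,x)$ (equation (\ref{cangorecek})), that $\hat\pi_r$ is uniformly elliptic ((\ref{tombala})), and that by Theorem \ref{density} there is a $\hat\pi_r$-invariant probability measure $\mathbb{Q}_r\ll\mathbb{P}$; moreover the walk under $P_o^r$ has a nonzero LLN speed $(\lambda'(r))^{-1}>B^{-1}>0$ (Lemmas \ref{cakabeycan} and \ref{lifeisrandom}). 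The key point is to relate the original left exponential moment $E_o^\omega[\mathrm{e}^{r\bar\tau_{-1}},\bar\tau_{-1}<\infty]$ to quantities computed under $\hat\pi_r$. Using the Doob-transform identity for path probabilities (the analogue of (\ref{kfer})/Lemma \ref{ewayak}), one has for any path segment staying strictly to the left of its right endpoint a clean change-of-measure formula: $\mathrm{d}P_x^\omega/\mathrm{d}P_x^{r,\omega}$ restricted to $\{X_k=y\}$ equals $\mathrm{e}^{-rk}u_r(\omega,x)/u_r(\omega,y)$, and the ratios $u_r(\omega,y)/u_r(\omega,x)$ are sandwiched between $(\delta\mathrm{e}^r)^{|y-x|}$ and its reciprocal by (\ref{cangorecek}). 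Decomposing $E_o^\omega[\mathrm{e}^{r\bar\tau_{-1}},\bar\tau_{-1}<\infty]$ over excursion segments and inserting this bounded Radon--Nikodym factor, I would reduce the question to whether $E_o^{r}[\mathrm{e}^{2r\bar\tau_{-1}}]$ (or a similar quantity with a small exponential weight absorbed) is finite under the tilted walk; but the tilted walk has strictly positive speed, hence $\bar\tau_{-1}$ has exponential tails under $P_o^{r,\omega}$ for $\mathbb{P}$-a.e.\ $\omega$ by a standard regeneration/Chebyshev argument (exactly of the type used in Lemma \ref{cakabeycan}), so the relevant moment is finite. This shows $\bar\lambda(r)<\infty$, i.e.\ $r\le\bar r_c$; letting $r\uparrow r_c$ gives $r_c\le\bar r_c$.

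The reverse inequality $\bar r_c\le r_c$ follows by running the identical construction with $(\tau_n)$ replaced by $(\bar\tau_{-n})$ throughout — building $\bar u_r$, the left-tilted kernel, its invariant density, and the negative-speed LLN — and then bounding $E_o^\omega[\mathrm{e}^{r\tau_1},\tau_1<\infty]$ by the same change-of-measure estimate. I expect the main obstacle to be the bookkeeping in the excursion decomposition: one must be careful that the Radon--Nikodym factor is only controlled along path pieces whose spatial range is finite (so that the product of $u_r$-ratios telescopes to a bounded quantity), and the event $\{\bar\tau_{-1}<\infty\}$ can involve long right excursions before the final descent. The remedy is to condition on the last visit to $0$ before $\bar\tau_{-1}$ and split into (i) the portion after that last visit, which stays in $\{X_k\le 0\}$ with a spatial range that is exactly $|X_{\bar\tau_{-1}}|\le B$, so the $u_r$-ratios there are bounded by $(\delta\mathrm{e}^r)^{-B}$, and (ii) the excursion structure before it, which under the tilted measure has the subexponential-return-time control coming from positive speed. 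Once these two pieces are separated, the estimate is routine and the equivalence follows. I would remark at the end that combining this proposition with Lemma \ref{lifeisrandom} justifies using a single $r_c$ for both $\lambda$ and $\bar\lambda$, as asserted there.
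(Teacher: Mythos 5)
Your overall mechanism --- tilting by the harmonic function $u_r$ and using the bounds (\ref{cangorecek}) --- is the right one and is essentially what the paper uses, but your execution of the change of measure contains a genuine error. With the Radon--Nikodym identity you yourself state ($\mathrm{d}P_o^\omega/\mathrm{d}P_o^{r,\omega}$ on length-$k$ paths ending at $y$ equals $\mathrm{e}^{-rk}u_r(\omega,0)/u_r(\omega,y)$), the factor $\mathrm{e}^{r\bar\tau_{-1}}$ cancels exactly: $E_o^\omega[\mathrm{e}^{r\bar\tau_{-1}},\bar\tau_{-1}<\infty]=E_o^{r,\omega}[u_r(\omega,X_{\bar\tau_{-1}})^{-1},\bar\tau_{-1}<\infty]\leq(\delta\mathrm{e}^r)^{-B}$, because $X_{\bar\tau_{-1}}\in[-B,-1]$ and the $u_r$-ratio depends only on the endpoint (it telescopes), not on the spatial range of the path. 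So the quantity $E_o^{r}[\mathrm{e}^{2r\bar\tau_{-1}}]$ that you reduce to enters only through a sign error in the exponent, and the step you lean on to handle it --- that positive speed gives $\bar\tau_{-1}$ exponential tails under $P_o^{r,\omega}$ strong enough to beat $\mathrm{e}^{2r\bar\tau_{-1}}$ --- is not justified: Lemma \ref{cakabeycan} controls deviations of $\tau_n/n$, not the tail of $\bar\tau_{-1}$ on $\{\bar\tau_{-1}<\infty\}$, and for $r>0$ near $r_c$ you would need a quantitative rate comparison that you do not supply. Likewise the ``last visit to $0$'' decomposition addresses a problem (dependence of the Radon--Nikodym factor on the path's range) that does not exist.

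Two further points. First, your reformulation of the proposition as ``$r_c=\bar r_c$'' loses the critical value: a.s.\ finiteness could in principle hold at $r=r_c$ for one passage time and fail for the other, and letting $r\uparrow r_c$ does not settle this; an argument that uses only the hypothesis at the given $r$, as the paper's does, avoids the issue. Second, once the cancellation above is noticed, the heavy input you invoke (Theorem \ref{density}, the invariant density, the positive-speed LLN) is unnecessary: the paper's proof is precisely the one-line version of your computation, phrased as optional stopping of the $P_o^\omega$-martingale $u_r(\omega,X_n)\mathrm{e}^{rn}$ at $\bar\tau_{-1}\wedge\tau_x$ followed by $x\to\infty$, which uses only the harmonicity of $u_r$ and its positivity at the finitely many possible overshoot sites.
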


\begin{proof}
If $\mathbb{P}\left(\omega: E_o^\omega[\mathrm{e}^{r\tau_1}, \tau_1<\infty]<\infty\right)=1$, then there exists a $u_r:\Omega\times\mathbb{Z}\to\mathbb{R}^+$ that satisfies $u_r(\omega,0)=1$ and $$u_r(\omega,x)=\sum_{z\in\mathcal{R}}\pi(x,x+z)\mathrm{e}^ru_r(\omega,x+z)$$ for every $x\in\mathbb{Z}$ and $\mathbb{P}$-a.e.\ $\omega$. Therefore, $u_r(\omega,X_n)\mathrm{e}^{rn}$ is a martingale under $P_o^\omega$. By the stopping time theorem,
\begin{align*}
1=u_r(\omega,0)&=E_o^\omega\left[u_r\left(\omega,X_{\bar{\tau}_{-1}\wedge\tau_x}\right)\mathrm{e}^{r(\bar{\tau}_{-1}\wedge\tau_x)}\right]\\
&\geq E_o^\omega\left[u_r\left(\omega,X_{\bar{\tau}_{-1}}\right)\mathrm{e}^{r\bar{\tau}_{-1}},\bar{\tau}_{-1}<\tau_x\right]\\
&\geq\inf_{-B\leq z<0}u_r(\omega,z)E_o^\omega\left[\mathrm{e}^{r\bar{\tau}_{-1}},\bar{\tau}_{-1}<\tau_x\right]
\end{align*} for every $x\geq1$. Taking $x\to\infty$ shows that $E_o^\omega[\mathrm{e}^{r\bar{\tau}_{-1}}, \bar{\tau}_{-1}<\infty]<\infty$. The proof of the other direction is similar.
\end{proof}

\section*{Acknowledgements}
This work is part of my Ph.D. thesis. I am grateful to my advisor S.\ R.\ S.\ Varadhan for suggesting this topic and for many valuable discussions and ideas. I also thank F.\ Rassoul-Agha for a careful reading of an earlier version of the manuscript and for detailed and constructive comments.

\end{document}